\documentclass[11pt,a4paper]{amsart}

\usepackage[utf8]{inputenc}
\usepackage[T1]{fontenc}
\usepackage[english]{babel}
\usepackage{amsmath,amssymb,amsfonts,amsthm,mathrsfs,mathtools}
\usepackage{enumitem}
\usepackage[colorlinks=true,linkcolor=black,urlcolor=blue,citecolor=black]{hyperref}
\usepackage{aliascnt}
\usepackage[noabbrev,capitalize]{cleveref}
\usepackage{fullpage}
\usepackage{xcolor}

\numberwithin{equation}{section}

\newtheorem{thm}{Theorem}[section]
\newaliascnt{prop}{thm}
\newtheorem{prop}[prop]{Proposition}
\aliascntresetthe{prop}
\newaliascnt{cor}{thm}
\newtheorem{cor}[cor]{Corollary}
\aliascntresetthe{cor}
\newaliascnt{lem}{thm}
\newtheorem{lem}[lem]{Lemma}
\aliascntresetthe{lem}
\newaliascnt{conj}{thm}

\aliascntresetthe{conj}
\theoremstyle{definition}
\newaliascnt{defi}{thm}

\aliascntresetthe{defi}
\newaliascnt{ex}{thm}

\aliascntresetthe{ex}
\theoremstyle{remark}
\newaliascnt{rmk}{thm}
\newtheorem{rmk}[rmk]{Remark}
\aliascntresetthe{rmk}

\crefname{thm}{Theorem}{Theorems}
\Crefname{thm}{Theorem}{Theorems}
\crefname{prop}{Proposition}{Propositions}
\Crefname{prop}{Proposition}{Propositions}
\crefname{cor}{Corollary}{Corollaries}
\Crefname{cor}{Corollary}{Corollaries}
\crefname{lem}{Lemma}{Lemmas}
\Crefname{lem}{Lemma}{Lemmas}
\crefname{conj}{Conjecture}{Conjectures}
\Crefname{conj}{Conjecture}{Conjectures}
\crefname{defi}{Definition}{Definitions}
\Crefname{defi}{Definition}{Definitions}
\crefname{ex}{Example}{Examples}
\Crefname{ex}{Example}{Examples}
\crefname{rmk}{Remark}{Remarks}
\Crefname{rmk}{Remark}{Remarks}

\newcommand{\Z}{\mathbb Z}

\newcommand{\R}{\mathbb R}

\newcommand{\T}{\mathbb T}
\newcommand{\E}{\mathbb E}
\newcommand{\PP}{\mathbb P}
\newcommand{\cF}{\mathcal F}
\newcommand{\1}{\mathbf 1}

\newcommand{\supp}{\operatorname{supp}}
\newcommand{\dist}{\operatorname{dist}}
\newcommand{\Cov}{\operatorname{Cov}}
\newcommand{\Var}{\operatorname{Var}}
\renewcommand{\Re}{\operatorname{Re}}
\renewcommand{\Im}{\operatorname{Im}}

\title[Fourier dimension of GMC]
{Fourier decay of Gaussian multiplicative chaos and boundary geometry}

\author{Marcu-Antone Orsoni}
\author{William Verreault}
\address{Marcu-Antone Orsoni, Universit\'e Laval, Qu\'ebec, QC, Canada}
\email{marcu-antone.orsoni@mat.ulaval.ca}
\address{William Verreault, University of Toronto, Toronto, ON, Canada}
\email{william.verreault@utoronto.ca}

\date{}

\begin{document}

\vspace*{-4mm}

\begin{abstract}
In this paper, we develop a new approach to studying the Fourier transform
of Gaussian multiplicative chaos. We determine the Fourier dimension of these random measures on tori for every smooth log-correlated covariance and in every dimension, and prove sharp bounds and exact formulas for
chaos on natural classes of bounded sets and hypersurfaces in Euclidean space. 
These results highlight how boundary geometry and multifractal concentration jointly govern Fourier decay, including regimes in which the boundary strictly reduces the Fourier dimension.
In the
circle setting, this gives a substantially simpler new proof of a conjecture of
Garban and Vargas.
\end{abstract}

\maketitle

\section{Introduction}

\subsection{Background}
Gaussian multiplicative chaos, introduced by Kahane \cite{Kahane1985}, is
the canonical framework for exponentiating a log-correlated Gaussian field.
Let $X$ be a centered Gaussian field on $\T^d$, or on an open set
$U\subset\R^d$, whose covariance is locally of the form
$$
 \E[X(x)X(y)]
 =
 \log\frac1{|x-y|}+g(x,y).
$$
In the smooth class considered in this paper, the covariance is smooth away
from the diagonal and $g$ is smooth near it.  Although $X$ is generally a
random distribution rather than a function, a suitable renormalization of
its exponential gives rise to a random measure $M_\gamma$ whenever
$
{0<\gamma<\sqrt{2d}}.
$
This is the subcritical Gaussian multiplicative chaos associated with $X$.
GMC has become a central object in modern probability, with a rich
geometric and analytic theory. We refer to the monograph
\cite{BerestyckiPowell} and the review \cite{RhodesVargas2014} for
background and applications. Its construction and the properties used in
this paper are recalled in \cref{sec:prelimGMC}.

A natural question in harmonic analysis is to ask how much cancellation occurs in
$$
 \widehat M_\gamma(\xi)
 =
 \int e^{-i\xi\cdot x}\,M_\gamma(dx)
$$
at high frequencies.  This question is particularly subtle because GMC is
singular and highly inhomogeneous.  Its local mass is governed by the thick
points of the underlying field, while its Fourier transform also records
oscillation across space.  The qualitative question is whether $M_\gamma$
is a Rajchman measure, meaning that its Fourier transform tends to zero.
The Fourier dimension quantifies the strongest uniform polynomial decay
that can hold.

The harmonic analysis of GMC was initiated by Garban and Vargas
\cite{GarbanVargas}.  Working with the canonical circle field, they proved
that its subcritical chaos is almost surely Rajchman, studied the limiting
fluctuations of its Fourier coefficients when $\gamma<1/\sqrt2$, and
conjectured that its Fourier dimension is equal to
$
 D_{\gamma,1}
$
where, in general,
$$
 D_{\gamma,d}
 =
 \begin{cases}
 d-\gamma^2,
 &0<\gamma^2\leq d/2,\\
 (\sqrt{2d}-\gamma)^2,
 &d/2<\gamma^2<2d.
 \end{cases}
$$
The quantity $D_{\gamma,d}$ is the almost sure correlation dimension of the
chaos, as follows from its multifractal spectrum formalism \cite{Bertacco2023}.  Since
correlation dimension bounds Fourier dimension from above, it is the
largest decay exponent one could expect.  The Garban--Vargas conjecture
therefore asks whether the spatial oscillation of the canonical circle
chaos is strong enough to saturate the obstruction created by its
multifractal concentration.  They also observed that their qualitative
Rajchman argument extends to the canonical fractional Gaussian field on
$\T^d$ in every dimension.

Lin, Qiu and Tan \cite{LinQiuTanI} first proved the conjectured equality for the exact
logarithmic kernel on $[0,1]$ and explained the corresponding modification
for the canonical circle field. They subsequently developed \cite{LinQiuTanII}  a
unified vector-valued martingale method which gives the exact Fourier
dimension in dimensions one and two, as well as the general lower bound
$
\min\{2,D_{\gamma,d}\}
$
under their higher-dimensional GMC hypotheses.  In
particular, this gives the exact value in the higher-dimensional parameter
range where $D_{\gamma,d}\leq2$. Chen, Lin and Qiu \cite{ChenLinQiu} later constructed, in
every dimension, a particular stationary torus field for which the exact
equality holds.

Our first theorem gives a new proof of the original circle conjecture and
proves its natural torus analogue for every smooth log-correlated covariance
in every dimension.
A dyadic version of our argument also recovers the particular model of Chen,
Lin and Qiu.  The finite-range reduction yields a short and direct proof
which applies uniformly in every dimension.
For the restriction of an ambient chaos to a general Euclidean domain, a
second mechanism enters the problem.  As in the classical Fourier analysis
of indicator functions, integration by parts converts bulk oscillation into
oscillatory integrals over the boundary. Their decay depends on whether
the boundary is flat or curved, each case producing different losses.  Our remaining results show that the Fourier dimension is governed by the interaction between this geometric
cancellation and the multifractal concentration of the chaos. A striking feature of our results is that the geometry of the
boundary can strictly lower the Fourier dimension, even though the
boundary itself carries zero GMC mass almost surely.

\subsection{Main results}

Our first result settles the torus problem throughout the smooth covariance
class.

\begin{thm}[Torus]
\label{thm:torus}
Let $d\geq1$, let $0<\gamma<\sqrt{2d}$ and let $M_\gamma$ be the GMC
associated with a smooth log-correlated Gaussian field on $\T^d$.  Then,
almost surely,
$$
 \dim_F(M_\gamma)=D_{\gamma,d}.
$$
\end{thm}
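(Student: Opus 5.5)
The proof splits into an upper bound and a lower bound.

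\emph{Upper bound.} We use the general fact, recalled in the introduction, that the Fourier dimension of a finite measure never exceeds its correlation dimension. Indeed, if $|\widehat M_\gamma(\xi)|\lesssim|\xi|^{-s/2}$, then the energy
$$
I_t(M_\gamma)=c\sum_{\xi\in\Z^d}|\widehat M_\gamma(\xi)|^2|\xi|^{t-d}
$$
is finite for every $t<s$. It follows that $\dim_F\le\dim_2(M_\gamma)$. We then invoke the multifractal formalism \cite{Bertacco2023}, which gives $\dim_2(M_\gamma)=D_{\gamma,d}$ almost surely. This closes the upper bound.

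\emph{Lower bound: reduction.} It suffices to show that for every $s<D_{\gamma,d}$ there is a finite $q$ with
$$
\sum_{\xi\in\Z^d\setminus\{0\}}|\xi|^{qs/2-d-1}\,\E|\widehat M_\gamma(\xi)|^{q}<\infty .
$$
Borel--Cantelli then gives $|\widehat M_\gamma(\xi)|\lesssim|\xi|^{-s/2+\epsilon}$ almost surely, so it is enough to prove the moment bound
$$
\E|\widehat M_\gamma(\xi)|^q\lesssim|\xi|^{-qs/2+o(1)}\qquad\text{for some } q \text{ depending on } s.
$$

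First we reduce to a finite-range field. Write the covariance as $\log\frac1{|x-y|}+g$ with $g$ smooth. By a Kahane convexity argument, or by decomposing $X=Y+Z$ with $Y$ an $\star$-scale invariant field of compactly supported kernel and $Z$ an independent smooth Gaussian field, we may assume
$$
M_\gamma(dx)=e^{Z(x)-\frac12\E Z^2}\,\tilde M_\gamma(dx).
$$
The smooth factor $e^{Z}$ is handled by conditioning on $Z$. Multiplying by a smooth density, integrated against $e^{-i\xi\cdot x}$, only costs a convolution of $\widehat{\tilde M}$ with a rapidly decaying sequence, so it preserves polynomial decay, up to a moment estimate on $\|e^Z\|_{C^k}$. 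The same device handles a smooth partition of unity, so we can work locally in $\R^d$ with exactly scale-invariant covariance.

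\emph{Lower bound: moment estimate.} Fix $|\xi|=N$. Take $r=N^{-1}$ and split $Y=Y_r+Y^r$, where $Y_r$ has correlation length $r$ (the fine field) and $Y^r$ is the coarse field. The coarse field is smooth at scale $r$ up to controlled fluctuations. Partition into cubes $Q$ of side $\asymp r$. Then
$$
\widehat M(\xi)=\sum_Q e^{\gamma Y^r(x_Q)-\dots}\,A_Q,\qquad A_Q=\int_Q e^{-i\xi\cdot x}e^{\gamma(Y^r(x)-Y^r(x_Q))}\,M^{(r)}(dx).
$$
Here $M^{(r)}$ is the chaos of the fine field. Because $Y_r$ has finite range $r$, the $A_Q$ for cubes at distance $>r$ are independent given $Y^r$.

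The key cancellation is that $\E[A_Q\mid Y^r]$ is small. The expectation of the fine chaos is Lebesgue measure, and Lebesgue measure integrated against $e^{-i\xi\cdot x}$ times a function smooth at scale $r$ yields arbitrarily small size, after choosing the cube partition via a smooth partition at scale slightly larger than $r$ (say $r^{1-\delta}$). So the sum is essentially a sum of independent centered terms. A Rosenthal/Burkholder inequality conditional on $Y^r$ gives
$$
\E|\widehat M(\xi)|^q\lesssim \E\Big(\sum_Q w_Q^2\,\E[|A_Q|^2\mid Y^r]\Big)^{q/2}+\E\sum_Q w_Q^q\,\E[|A_Q|^q\mid Y^r],
$$
where $w_Q$ denotes the coarse weights. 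By scaling, $A_Q$ has the law of $r^{d}$ times a chaos of unit size with finite moments of order $<2d/\gamma^2$.

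The first term gives the $L^2$ behaviour. It is governed by $\E M_r(\text{coarse})$ squared over cubes, which leads to the exponent $d-\gamma^2$ through the coarse-chaos moments of order $2$ (the $q/2$ power is handled by a second induction or by Kahane). The second term gives $\sum_Q r^{dq}\E w_Q^q\approx r^{-d}r^{dq}r^{-\zeta(q)+dq}\cdots$, with
$$
\zeta(q)=(d+\tfrac{\gamma^2}2)q-\tfrac{\gamma^2}2q^2 .
$$
Optimizing over $q\in(1,2d/\gamma^2)$ yields exactly the $\tau$-spectrum Legendre value $D_{\gamma,d}$: the regime $\gamma^2\le d/2$ is driven by $q=2$, and the regime $\gamma^2>d/2$ by $q\to\sqrt{2d}/\gamma$.

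\emph{Main obstacle.} The hard step is the bookkeeping in the moment estimate. One has to show that the combination of coarse moments and fine moments gives exactly $N^{-qs/2}$ for all $s<D_{\gamma,d}$, including the regime $\gamma^2>d/2$, where $q$ must approach the moment threshold and the coarse weights' moments nearly blow up. A second point is that the smallness of $\E[A_Q\mid Y^r]$ must hold despite the coarse field being only Hölder. I would try an iterative multiscale argument, applying the estimate dyadically in scales between $r^{1-\delta}$ and $r$, and use the dyadic version to cover the Chen--Lin--Qiu model.
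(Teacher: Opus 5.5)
Your upper bound is the same as the paper's. The lower bound has a genuine gap in the reduction step, and it is not a bookkeeping issue. From $\sum_{\xi}|\xi|^{qs/2-d-1}\E|\widehat M_\gamma(\xi)|^q<\infty$ you only obtain $|\widehat M_\gamma(\xi)|\lesssim|\xi|^{-s/2+(d+1)/q}$. Equivalently, a Borel--Cantelli/union bound over the $\asymp N^d$ lattice points with $|\xi|\asymp N$ costs a factor $N^{d/q}$. So your $\epsilon$ is at least $d/q$, and you get at best $\dim_F(M_\gamma)\geq s-2d/q$.

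This loss disappears only if $q\to\infty$, which is impossible here. The GMC moment bounds you rely on hold only for $q<2d/\gamma^2$. Moreover, as you observe yourself, the exponents at which your scalar moment bound reaches $s$ close to $D_{\gamma,d}$ are $q=2$ or $q\to\sqrt{2d}/\gamma$, that is, $q\leq2$. At such $q$ the loss is $2d/q\geq d>D_{\gamma,d}$, which destroys the bound completely, and no admissible larger $q$ makes it vanish. So no argument through scalar moments of individual Fourier coefficients, followed by a union bound, can reach $D_{\gamma,d}$.

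The missing idea is to treat all frequencies of a dyadic annulus as one vector in the weighted space $B_{k,s,n}$, where $\|f\|^{2k}=\sum_{m\in A_n}|f(m)|^{2k}|m|^{ks-d}$. Since $\ell^{2k}$ has type $2$, the conditionally independent, conditionally centred cube pieces satisfy $\E[\|\sum_QZ_Q\|^q\mid\cF_{>\delta}]\lesssim_{k,q}\sum_Q\E[\|Z_Q\|^q\mid\cF_{>\delta}]$ with $1<q\leq2$ (\cref{lem:conditional-type}). The normalisation $-d$ in the weight absorbs the $2^{nd}$ frequencies at no cost: $\|Z_Q\|_{B_{k,s,n}}\lesssim2^{ns/2}\big(M_\gamma(Q)+\E[M_\gamma(Q)\mid\cF_{>\delta}]\big)$. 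This gives $\E\|\widehat{\widetilde M_{\leq\delta_n}}\|_{B_{k,s,n}}^q\lesssim2^{nsq/2}\delta_n^{\zeta_d(q)-d}$. Converting the $\ell^{2k}$ bound into pointwise decay then costs only $d/k$ (\cref{lem:weighted-fourier-criterion}), with $k$ arbitrary and decoupled from the probabilistic exponent $q$. No Rosenthal square-function term is needed.

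Three smaller corrections:
\begin{enumerate}[label=\textup{(\roman*)}]
 \item The coarse field itself, not only the partition, must be cut at $\delta_n=2^{-\lfloor(1-\eta)n\rfloor}$. With the cut at $r=|\xi|^{-1}$, integration by parts gains nothing.
 \item You need no per-cube smallness of $\E[A_Q\mid Y^r]$. Subtract the whole conditional mean $M_{>\delta}=\rho_{>\delta}\,dx$, so that $\rho_{>\delta}(M_{\leq\delta}-dx)$ is exactly centred, and integrate $M_{>\delta}$ by parts globally. In the finite-range model the coarse density is smooth with $\E[N_{\delta,J}]\lesssim1$, not merely H\"older.
 \item Kahane convexity cannot transfer pointwise Fourier decay. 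Also, writing $X=Y+Z$ with $Z$ independent requires $K_X-K_Y$ to be positive semidefinite. Use instead the two-sided Gaussian completion $X+G_X\stackrel{(d)}{=}Y+G_Y$ of \cref{prop:smooth-transfer}.
\end{enumerate}
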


In particular, the theorem applies to the normalized fractional Gaussian
field
$$
 X(x)
 =
 a_d\sum_{m\in\Z^d\setminus\{0\}}
 \frac{\xi_m}{|m|^{d/2}}e^{im\cdot x},
$$
where $\xi_{-m}=\overline{\xi_m}$ and the variables indexed by one
representative of each pair $\{m,-m\}$ are independent standard complex
Gaussians.  The constant $a_d$ normalizes the logarithmic singularity, and
the series is interpreted in the sense of distributions.  For $d=1$, this
is the canonical circle field studied by Garban and Vargas, while for
$d=2$ it is a normalization of the mean-zero Gaussian free field on the
torus.
The particular block field of Chen, Lin and Qiu is also covered, although
not by smooth-perturbation invariance: its covariance remainder is only
known to be bounded and continuous.  Instead,
\cref{prop:dyadic-finite-range} applies directly to its dyadic
decomposition.

For the Euclidean results, let $U\subset\R^d$ be an open set on which the
field and its GMC are defined.  If $A\subset\R^d$ is a bounded set
with $A\Subset U$, we write
$$
 M_{\gamma,A}=\1_A M_\gamma.
$$
Here and below, $A\Subset U$ means that $\overline A$ is a compact subset
of $U$. Equivalently, $M_{\gamma,A}$ is the GMC constructed from $X$ with
reference measure $\1_A\,dx$. The ambient formulation ensures that the
logarithmic covariance remainder is smooth up to the boundary of $A$.
We reserve the letter $D$ for a
domain, by which we mean an open connected set, when the geometry of its
boundary enters the statement.  The first Euclidean result is
measure-theoretic and therefore belongs naturally to the more general
notation $A$.

\begin{thm}[Finite perimeter lower bound]
\label{thm:finite-perimeter}
Let $d\geq1$, let $U\subset\R^d$ be open and let
$A\subset\R^d$ be a bounded set
of positive Lebesgue measure and finite perimeter such that
$A\Subset U$.  Let $0<\gamma<\sqrt{2d}$ and let $M_\gamma$ be
the GMC associated with a smooth log-correlated Gaussian field on $U$.
Then, almost surely,
$$
 \dim_F(M_{\gamma,A})
 \geq
 \min\{2,D_{\gamma,d}\}.
$$
\end{thm}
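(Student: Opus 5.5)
We reduce the lower bound on $\dim_F(M_{\gamma,A})$ to a moment estimate for $\widehat{M_{\gamma,A}}(\xi)$, with integration by parts turning the oscillation into a boundary term controlled by the finite perimeter. By a standard Borel--Cantelli argument over a lattice of frequencies, together with a Lipschitz bound for $\widehat{M_{\gamma,A}}$ (the measure has compact support and finite total mass), it suffices to show the following. For every $s<\min\{2,D_{\gamma,d}\}$ there is an even integer or real $q$ large enough that
$$
\E\bigl|\widehat{M_{\gamma,A}}(\xi)\bigr|^{q}\lesssim |\xi|^{-qs/2}\quad\text{up to }|\xi|^{o(1)}\text{ losses},
$$
with $q$ large so that $\sum_{\xi\in\Z^d}|\xi|^{-q\epsilon}<\infty$. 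In practice we follow the same finite-range reduction used for \cref{thm:torus}. The field is decomposed as $X=X_{\le n}+Y_n$, where $Y_n$ has correlation range about $|\xi|^{-1}$ (say $2^{-n}\approx|\xi|^{-1}$) and $X_{\le n}$ is smooth at scale $2^{-n}$. By smooth-perturbation invariance of the Fourier dimension (with Kahane convexity or a Girsanov-type change if needed), we may assume the covariance has this exact multiscale structure on a neighbourhood of $\overline A$, which is legitimate because $A\Subset U$.

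Next, split $\widehat{M_{\gamma,A}}(\xi)$ into two pieces, conditioning on $\mathcal F_n=\sigma(X_{\le n})$:
$$
\widehat{M_{\gamma,A}}(\xi)=\int_A e^{-i\xi\cdot x}\bigl(M_\gamma(dx)-\E[M_\gamma(dx)\mid\cF_n]\bigr)+\int_A e^{-i\xi\cdot x}e^{\gamma X_{\le n}(x)-\frac{\gamma^2}{2}\E X_{\le n}^2}\,dx .
$$
The first (fluctuation) term is a sum of conditionally independent, finite-range-dependent centered blocks of size $2^{-n}$. A Rosenthal/Burkholder moment inequality, combined with multifractal estimates for the masses $M_\gamma(Q)$ of dyadic cubes, bounds its $q$-th moment by $|\xi|^{-qD_{\gamma,d}/2+o(1)}$ exactly as in the torus case. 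The indicator $\1_A$ is harmless here, since it only removes blocks, and the bound only uses $|\int_{Q\cap A}e^{-i\xi x}(\cdots)|\le$ local masses. This step should therefore be taken directly from the torus argument.

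The new ingredient is the second (smooth) term $\int_A e^{-i\xi\cdot x}F_n(x)\,dx$, with $F_n=e^{\gamma X_{\le n}-\frac{\gamma^2}{2}\E X_{\le n}^2}$ smooth. Write $e^{-i\xi\cdot x}=\frac{i\xi}{|\xi|^2}\cdot\nabla e^{-i\xi\cdot x}$ and apply the Gauss--Green formula for finite perimeter sets:
$$
\int_A e^{-i\xi\cdot x}F_n\,dx=\frac{i}{|\xi|^2}\Bigl(\int_{\partial^*A}e^{-i\xi\cdot x}F_n\,\xi\cdot\nu_A\,d\mathcal H^{d-1}-\int_A e^{-i\xi\cdot x}\,\xi\cdot\nabla F_n\,dx\Bigr).
$$
\begin{itemize}
\item The boundary term is at most $|\xi|^{-1}\int_{\partial^*A}F_n\,d\mathcal H^{d-1}$. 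Since $\E F_n(x)^q\approx 2^{n\gamma^2 q(q-1)/2}$, this only gives $|\xi|^{-1}$ up to a $q$-dependent polynomial loss. That is the reason for the cap at $2$: in the $q$-th moment we need $\E|\cdot|^q\lesssim|\xi|^{-q\cdot 1}$ up to $|\xi|^{o(q)}$. So we instead take low moments $q$ close to $1$, or better, use $\E\bigl(\int_{\partial^*A}F_n\bigr)^q$ via Minkowski with perimeter finiteness. This gives order $|\xi|^{-q+\gamma^2q(q-1)/2}$, which suffices for decay exponent $1$ in $|\widehat\cdot|$, i.e.\ $s$ close to $2$ after squaring.
\item The interior term reproduces the same structure with $\nabla F_n=\gamma F_n\nabla X_{\le n}$. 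It is handled by iterating the integration by parts once more, or by grouping $\nabla X_{\le n}$ scale by scale: the scale-$2^{-k}$ contribution has size $2^{k}$ and is again oscillatory at frequency $\xi$. This gives a geometric sum bounded by $|\xi|^{-1}$ times chaos moments.
\end{itemize}

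I expect the main obstacle to be making the moment choice compatible with Borel--Cantelli. The boundary term has heavy-tailed moments whose exponents deteriorate with $q$, while summing over $\xi\in\Z^d$ requires large $q$. I would first handle this by working with dyadic frequency shells and a union bound: in each shell $|\xi|\sim 2^n$ the boundary term is independent of $\xi$ up to the factor $|\xi|^{-1}$. One then bounds $\sup_{|\xi|\sim2^n}$ by the single random variable $2^{-n}\int_{\partial^*A}F_n\,d\mathcal H^{d-1}$, which is a martingale in $n$ converging to a boundary GMC (or at least grows like $2^{n\epsilon}$ almost surely). This avoids any union bound over frequencies and gives $|\widehat{M_{\gamma,A}}(\xi)|\lesssim|\xi|^{-1+\epsilon}$ from the boundary, hence Fourier exponent $2$.
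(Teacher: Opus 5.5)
There is a genuine gap in your treatment of the interior term. It comes from placing the coarse/fine cutoff exactly at $2^{-n}\approx|\xi|^{-1}$.

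\textbf{Why the interior term fails.} At that cutoff, $F_n=\rho_{>2^{-n}}$ still oscillates at frequency $|\xi|$, so every derivative costs a factor $2^n$ (for instance $\E\int_A|\nabla F_n|\,dx\asymp2^n$). After one Gauss--Green step the interior term is therefore only bounded by $|\xi|^{-1}\int_A|\nabla F_n|\,dx$, which is of order one. Iterating does not help: after $J$ steps the remainder bound is of order $(2^n/|\xi|)^J\asymp1$. Splitting $\nabla X_{\leq n}$ into scale blocks does not help either. The product $F_n\nabla X_k$ still contains $F_n$, so each further integration by parts again costs $2^n$, and for the blocks with $2^k\approx|\xi|$ the available bound is already of order one. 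Whatever decay this piece has comes from probabilistic cancellation, which deterministic integration by parts cannot see.

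\textbf{The missing idea.} Take the cutoff slightly coarser, $\delta_n=2^{-\lfloor(1-\eta)n\rfloor}$ on the shell $|\xi|\sim2^n$.
\begin{itemize}
\item The scales between $|\xi|^{-1}$ and $\delta_n$ move into the centered fine fluctuation. \cref{lem:domain-fine-part} still controls it for every $s<D_{\gamma,d}$ once $\eta$ is small.
\item $J$ integrations by parts now gain $(\delta_n|\xi|)^{-J}\leq2^{-\eta Jn}$. This is summable against $2^{ns/2}$ for $J\eta>s/2$, since $\E[N^A_{\delta,J}]\lesssim1$ by \cref{lem:localmoments}.
\end{itemize}

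\textbf{Comparison with the paper, once this is fixed.} Your Gauss--Green route then becomes a valid alternative. Iterating the divergence theorem on $\partial^*A$, as in \eqref{eq:domain-ibp-lower}, gives boundary terms bounded by $|\xi|^{-1}(\delta|\xi|)^{-\ell}\int_{\partial^*A}\delta^\ell|\partial_\omega^\ell\rho_{>\delta}|\,d\mathcal H^{d-1}$. Their expectations are $\lesssim|\xi|^{-1}\operatorname{Per}(A)$. Since these bounds are uniform on each shell, first moments suffice, and the heavy-tail obstacle you anticipate does not arise. Your nonnegative-martingale bound for $\int_{\partial^*A}F_n\,d\mathcal H^{d-1}$ is also correct. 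The paper instead mollifies $\1_A$ at scale $\delta$ and pays $\E\|(\1_A-a_\delta)\rho_{>\delta}\|_{L^1}=\|\1_A-a_\delta\|_1\lesssim\delta$, uniformly in $\xi$. That avoids boundary integrals and the structure theory of $\partial^*A$ entirely, and it extends to the fractional condition of \cref{rmk:fractional-boundary}.

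\textbf{The opening reduction.} Separately, the reduction in your first paragraph does not work as stated. GMC masses have finite $q$-th moments only for $q<2d/\gamma^2$, so $q$ cannot be taken large. A Borel--Cantelli argument over the $\gtrsim2^{nd}$ effective frequencies in a shell then loses an amount of order $d/q$ in the exponent. It is worse if you interpolate with the $O(1)$ Lipschitz bound, which forces lattice spacing $\lesssim|\xi|^{-s/2}$. The paper decouples the two exponents:
\begin{itemize}
\item it takes probabilistic moments of order $\theta\in(1,2]$ of the shell norms $\|\cdot\|_{B_{k,s,n}}$, which are $L^{2k}$ in frequency;
\item it controls the fine blocks with the conditional type inequality \cref{lem:conditional-type};
\item it recovers pointwise decay from \cref{lem:weighted-fourier-criterion}, with a loss $d/k$ that vanishes as $k\to\infty$.
\end{itemize}
If ``taken directly from the torus argument'' means this machinery, your fine term is fine, but the pointwise framework should be replaced by it.

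\textbf{Smooth transfer.} The reduction to the finite-range model cannot rest on Kahane's inequality, because bounded covariance differences do not transfer pointwise Fourier decay. What works is a Gaussian completion that multiplies the chaos by smooth positive densities (\cref{prop:smooth-transfer}), applied with the factor $\1_A$ inserted.
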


The positive-measure assumption excludes the trivial case: if
$|A|=0$, then ${\E[M_\gamma(A)]=0}$ and hence
$M_{\gamma,A}=0$ almost surely.
The proof only uses finite perimeter through a
mollification estimate. A fractional version is recorded in
\cref{rmk:fractional-boundary} below.

Multiplication by a smooth compactly supported function introduces no
boundary loss.

\begin{cor}[Smooth localization]
\label{cor:smooth-localization}
Let $0<\gamma<\sqrt{2d}$, let $M_\gamma$ be associated with a smooth
log-correlated Gaussian field on an open set $U\subset\R^d$, and let
$0\leq\chi\in C_c^\infty(U)$ be nonzero. Then, almost surely,
$$
 \dim_F(\chi M_\gamma)=D_{\gamma,d}.
$$
\end{cor}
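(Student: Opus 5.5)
The plan is to reduce \cref{cor:smooth-localization} to \cref{thm:torus}, with a separate argument for the upper bound.

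\emph{Upper bound.} Since $\chi$ is smooth and nonzero, there is a ball $B\Subset U$ on which $\chi\geq c>0$. On $B$ the measure $\chi M_\gamma$ is comparable to $M_\gamma$. The lower multifractal estimates for $M_\gamma$ on $B$ then show that the correlation dimension of $\chi M_\gamma$ is at most $D_{\gamma,d}$ almost surely. These are the same estimates that identify $D_{\gamma,d}$ as the correlation dimension, and they are local. Because Fourier dimension is bounded above by correlation dimension, we get $\dim_F(\chi M_\gamma)\leq D_{\gamma,d}$.

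\emph{Lower bound.} I would transplant the problem to a torus. After translating and dilating, assume $\supp\chi$ lies in a small cube $Q$, which sits inside a fundamental domain of $\T^d$ (taken as $[-\pi,\pi)^d$ rescaled). The key step is to construct a smooth log-correlated field $Y$ on $\T^d$ whose covariance agrees with that of $X$ on a neighbourhood of $\overline Q\times\overline Q$. To do this, take the covariance $K$ of $X$. It has the form $\log\frac1{|x-y|}+g(x,y)$ with $g$ smooth near $\overline Q\times\overline Q$. Write the torus covariance as $\log\frac1{|x-y|_{\T^d}}+\tilde g$, where $\tilde g$ is a cutoff extension of $g$. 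This kernel may fail to be positive definite, so I would fix that by adding a large smooth positive-definite kernel and using the smooth-perturbation invariance built into \cref{thm:torus}. Alternatively, and more simply, decompose $X=X_1+Z$ on $Q$. Here $X_1$ is the restriction of a torus field from the smooth class, for instance a suitable multiple of the fractional field together with a positive-definite correction. The remainder $Z$ is an independent smooth Gaussian field or a smooth deterministic shift, so that
$$\chi M_\gamma = \chi\, e^{\gamma Z-\frac{\gamma^2}{2}\E[Z^2]}\,M^{(1)}_\gamma$$
in law. The weight $\phi=\chi e^{\gamma Z-\frac{\gamma^2}2\E Z^2}$ is then a random smooth function compactly supported in $Q$ and independent of $M^{(1)}_\gamma$.

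Next, I would show that multiplying by a smooth $\phi$ does not lower the Fourier dimension of a torus GMC. Expand $\phi=\sum_k\hat\phi(k)e^{ik\cdot x}$ with rapidly decaying coefficients. This gives
$$\widehat{\phi M}(\xi)=\sum_k\hat\phi(k)\widehat M(\xi-k).$$
If $|\widehat M(\eta)|\leq C_\varepsilon(1+|\eta|)^{-(D-\varepsilon)/2}$, split the sum into $|k|\leq|\xi|/2$ and $|k|>|\xi|/2$. The first part is bounded by $\|\hat\phi\|_{\ell^1}|\xi|^{-(D-\varepsilon)/2}$. The second is bounded by $M(\T^d)\sum_{|k|>|\xi|/2}|\hat\phi(k)|$, which is $O(|\xi|^{-N})$. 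For non-integer $\xi\in\R^d$, note that $\phi M$ is supported inside the fundamental domain. Its Fourier transform at real frequencies is therefore obtained from the integer-frequency coefficients of $\psi\phi M$, where $\psi$ is a smooth cutoff equal to $1$ on $Q$. A standard interpolation, expanding $e^{-i(\xi-\lfloor\xi\rfloor)\cdot x}\psi(x)$ in a Fourier series, reduces this to the integer case with the same decay. Combining with \cref{thm:torus} gives $\dim_F(\chi M_\gamma)\geq D_{\gamma,d}$ almost surely.

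The main obstacle is the field comparison step: realizing $X$ on $Q$ as a torus smooth-class field plus an independent smooth field. I would attempt it via the standard fact that $\log\frac1{|x-y|}$ restricted to a small ball dominates, in the positive-definite sense, a kernel of the form $\log\frac1{|x-y|_{\T^d}}-C$, with the difference smooth and positive definite after shrinking the ball. If that decomposition fails, I would instead run the finite-range argument behind \cref{thm:torus} directly in $\R^d$ with the smooth weight $\chi$. The weight enters only through the harmless convolution estimate above.
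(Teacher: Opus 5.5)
Your proposal is correct but takes a different route from the paper for the lower bound. The upper bound is the paper's argument: $\chi M_\gamma\geq cM_{\gamma,B}$ on a ball where $\chi\geq c$, the almost sure identity $\dim_2(M_{\gamma,B})=D_{\gamma,d}$, and \eqref{eq:reldim}. That inequality only gives $\min\{\dim_F,d\}\leq\dim_2$, which is harmless since $D_{\gamma,d}<d$.

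\emph{The paper's lower bound.} The paper never passes through the torus. It applies \cref{lem:multiplier-approximation} with $f=a_\delta=\chi$, which reruns the finite-range coarse/fine argument directly in $\R^d$:
\begin{enumerate}[label=\textup{(\roman*)}]
 \item the coarse term $\chi\rho_{>\delta}$ is integrated by parts with no boundary terms;
 \item the fine fluctuation is controlled by \cref{lem:domain-fine-part};
 \item smooth transfer with the factor $\chi$ inserted returns to the original field.
\end{enumerate}
This is a two-line consequence of a lemma needed anyway for \cref{thm:finite-perimeter}. It requires no rescaling, no embedding into $\T^d$ and no passage from integer to real frequencies. Your stated fallback is exactly this proof.

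\emph{Your lower bound.} You use \cref{thm:torus} as a black box and add only deterministic Fourier facts, and you handle both correctly. The first is stability of integer-frequency decay under multiplication by a smooth function. The second is recovery of all real frequencies for a measure supported inside a fundamental domain. This is more modular, at the cost of localization bookkeeping and a transfer step between $U$ and $\T^d$.

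\emph{The transfer step.} The step you flag as the main obstacle is already settled by \cref{prop:smooth-transfer}. You should drop the one-sided decomposition $X=X_1+Z$ with $Z$ independent.
\begin{itemize}
 \item It requires $K_X-K_{X_1}$ to be positive semidefinite on $Q\times Q$. This need not hold for a given torus field $X_1$, for instance if the two smooth remainders differ by a negative constant there.
 \item The ``standard fact'' you quote does not help. On a small ball the difference is just the constant $C$, while $\log\frac1{|x-y|_{\T^d}}-C$ is not the covariance of a smooth log-correlated torus field; it is not even smooth off the diagonal on $\T^d$.
\end{itemize}

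Instead, take an open cube $V$ of side less than $\pi$ with $\supp\chi\Subset V$. View $V$ simultaneously as an open subset of $\R^d$ and of $\T^d$; torus and Euclidean distances agree on it. The restriction to $V$ of any smooth log-correlated torus field $Y$, such as the periodized finite-range field, is a smooth log-correlated field on $V$. Its covariance differs smoothly from that of the rescaled $X$. The two-sided Gaussian completion $X+G_X\stackrel{(d)}{=}Y+G_Y$ from the proof of \cref{prop:smooth-transfer} then shows that $\chi M^X_\gamma$ and $\chi M^Y_\gamma$ agree in law up to smooth positive multipliers. Hence their Fourier dimensions have the same law, and no covariance matching or positive-definiteness repair is needed.
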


The positive Fourier decay also has a qualitative consequence under
absolute continuity, with no regularity assumption on the density or on
the boundary of a restricting set.

\begin{cor}[Rajchman property]
\label{cor:rajchman-domains}
Let $0<\gamma<\sqrt{2d}$ and let $M_\gamma$ be associated with a smooth
log-correlated Gaussian field on an open set $U\subset\R^d$.
Almost surely, every finite measure on $U$ that is absolutely
continuous with respect to $M_\gamma$ is Rajchman.
In particular, this holds simultaneously for all restrictions
$M_{\gamma,A}$ with $A\Subset U$.
\end{cor}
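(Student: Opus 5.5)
We prove \cref{cor:rajchman-domains} by reducing from arbitrary absolutely continuous measures to smooth localizations of $M_\gamma$, using \cref{cor:smooth-localization} on a countable family of cutoffs and then density in $L^1$. Fix an exhaustion of $U$ by compact sets $K_1\subset K_2\subset\cdots$ and a countable family $\mathcal C\subset C_c^\infty(U)$ of nonnegative nonzero functions. We want $\mathcal C$ to be dense in the following sense: for every $\chi_0\in C_c(U)$ with $\chi_0\geq0$ and every $\varepsilon>0$, there is $\chi\in\mathcal C$ with $\supp\chi\subset K_j$ for some $j$ independent of $\varepsilon$ and $\|\chi-\chi_0\|_\infty<\varepsilon$. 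A countable dense subset of $C_c^\infty(K_j^\circ)$ in the sup norm, taken over all $j$, does this. Since $\mathcal C$ is countable, \cref{cor:smooth-localization} gives one almost sure event $\Omega_0$ on which $\dim_F(\chi M_\gamma)=D_{\gamma,d}>0$ for every $\chi\in\mathcal C$. In particular $\widehat{\chi M_\gamma}(\xi)\to0$ as $|\xi|\to\infty$ for all $\chi\in\mathcal C$. We also intersect $\Omega_0$ with the almost sure event that $M_\gamma$ is a Radon measure on $U$, so that $M_\gamma(K_j)<\infty$ for all $j$.

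Work on $\Omega_0$ and let $\mu=fM_\gamma$ be a finite measure on $U$ with $0\le f\in L^1(M_\gamma)$. The general case reduces to this one by splitting a complex density into positive and negative parts of its real and imaginary parts; the statement concerns finite measures, which may be taken positive. Fix $\varepsilon>0$. Since $M_\gamma$ is Radon on $U$, continuous compactly supported functions are dense in $L^1(M_\gamma)$, so we can find $0\le h\in C_c(U)$ with $\|f-h\|_{L^1(M_\gamma)}<\varepsilon$. Let $K_j\supset\supp h$. Choose $\chi\in\mathcal C$ supported in $K_j$ with $\|\chi-h\|_\infty<\varepsilon/(1+M_\gamma(K_j))$. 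Then
$$
|\widehat\mu(\xi)-\widehat{\chi M_\gamma}(\xi)|\le \|f-h\|_{L^1(M_\gamma)}+\|h-\chi\|_{L^1(M_\gamma)}<2\varepsilon
$$
uniformly in $\xi$. Hence $\limsup_{|\xi|\to\infty}|\widehat\mu(\xi)|\le2\varepsilon$, and letting $\varepsilon\to0$ shows that $\mu$ is Rajchman. This holds for every such $\mu$ simultaneously, because the only randomness used is $\Omega_0$.

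For the final sentence, $M_{\gamma,A}=\1_A M_\gamma$ with $A\Subset U$. This measure is finite because $\overline A$ is compact and $M_\gamma$ is Radon, and it is absolutely continuous with respect to $M_\gamma$. So it is covered by the previous paragraph, with no regularity needed on $\partial A$.

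The main obstacle is only bookkeeping. We need the exceptional null set to be chosen before $\mu$, which is why $\mathcal C$ must be countable and the supports must be controlled by the fixed compacts $K_j$. The sup-norm approximation then converts into $L^1(M_\gamma)$ approximation using the finiteness of $M_\gamma(K_j)$.
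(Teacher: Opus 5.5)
Your argument is correct, but it places the approximation step differently from the paper.

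\textbf{The paper's route.} It uses only an exhaustion $V_j\Subset U$ and applies \cref{thm:finite-perimeter} so that every $M_{\gamma,V_j}$ is Rajchman on one probability-one event. It then proves a deterministic lemma: if $\mu$ is compactly supported and Rajchman and $f\in L^1(\mu)$, then $f\mu$ is Rajchman. For $f_\ell\in C_c^\infty(\R^d)$ one has $\widehat{f_\ell\mu}=(2\pi)^{-d}\widehat{f_\ell}*\widehat\mu$ with $\widehat{f_\ell}\in L^1$, so this transform vanishes at infinity. The $L^1(\mu)$ error then controls the Fourier transforms uniformly. Finally $\1_{V_j}\nu\to\nu$ in total variation.

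\textbf{Your route.} You push all the smooth multipliers into the random input. Separability gives countably many cutoffs, and \cref{cor:smooth-localization} handles each one. No Fourier-side convolution argument is needed, and the $L^1(M_\gamma)$ approximation is done directly in one step.

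\textbf{Comparison.} The paper needs only one random measure per exhaustion step and produces a reusable deterministic lemma, with less bookkeeping. Yours makes visible that the Rajchman property uses only the boundary-free smooth-localization estimate. That said, the paper's convolution lemma would work equally well starting from cutoffs $\chi_j\in C_c^\infty(U)$ equal to one on $V_j$. So the real difference is where the smoothing happens: deterministically on the Fourier side, or through a countable union of null sets.

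\textbf{Two small bookkeeping fixes.}
\begin{enumerate}
\item Take $\mathcal C$ to be a countable sup-norm dense subset of the nonnegative elements of $C_c^\infty(K_j^\circ)$, not of all of $C_c^\infty(K_j^\circ)$. Such a subset exists because subsets of separable metric spaces are separable. It is dense in the nonnegative elements of $C_c(K_j^\circ)$ by mollification.
\item Take the exhaustion with $K_j\subset K_{j+1}^\circ$, so that $\supp h\subset K_j^\circ$ for some $j$.
\end{enumerate}
The case $h=0$ is trivial.
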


\begin{proof}
Choose a smooth exhaustion $V_j\Subset U$, with
$\overline V_j\subset V_{j+1}$ and $\bigcup_jV_j=U$.
By \cref{thm:finite-perimeter}, almost surely every
$M_{\gamma,V_j}$ is Rajchman. Work on this single probability-one event.
If $\mu$ is a compactly supported Rajchman measure and $f\in L^1(\mu)$,
choose $f_\ell\in C_c^\infty(\R^d)$ converging to $f$ in $L^1(\mu)$.
Each $f_\ell\mu$ is Rajchman because its Fourier transform is the
convolution of $\widehat f_\ell\in L^1(\R^d)$ with $\widehat\mu$, and
$$
 \sup_{\xi\in\R^d}
 |\widehat{f_\ell\mu}(\xi)-\widehat{f\mu}(\xi)|
 \leq\|f_\ell-f\|_{L^1(\mu)}.
$$
Thus $f\mu$ is Rajchman. Now let $\nu$ be any finite measure on $U$ absolutely continuous
with respect to $M_\gamma$, and set $\nu_j=\1_{V_j}\nu$.
Since $\nu_j\ll M_{\gamma,V_j}$, the preceding argument shows that
$\nu_j$ is Rajchman. Moreover,
$$
 \sup_{\xi\in\R^d}
 |\widehat\nu(\xi)-\widehat{\nu_j}(\xi)|
 \leq \nu(U\setminus V_j)\longrightarrow0.
$$
Thus $\nu$ is Rajchman. This deterministic argument applies on the
same probability-one event to every such $\nu$, proving the claim.
\end{proof}

The probability-one event is independent of the absolutely continuous
measure. In particular, the set in the last assertion may depend on
the realization of the chaos. 

We now turn to domains and the geometry of their boundaries.  When
$D_{\gamma,d}>2$, the exponent $2$ in
\cref{thm:finite-perimeter} is sharp in the presence of a planar
boundary piece. We say that $\partial D$ contains a \emph{flat open patch}
if a nonempty relatively open subset of $\partial D$ is contained in an
affine hyperplane. This is a local condition on the boundary. For a
convex domain, it is enough that a supporting face have positive
$(d-1)$-dimensional measure. We also call $D$ a \emph{Lipschitz} domain if its boundary is locally the graph of
a Lipschitz function, with $D$ lying on one side of that graph.
Note that every bounded Lipschitz domain has finite perimeter.

\begin{thm}[Flat open patch]
\label{thm:flat-boundary}
Let $d\geq2$, let $U\subset\R^d$ be open and let $D\Subset U$ be a bounded
Lipschitz domain whose boundary contains a flat open patch.  Let
$0<\gamma<\sqrt{2d}$ and let $M_\gamma$ be the GMC associated with a
smooth log-correlated Gaussian field on $U$.  Then, almost surely,
$$
 \dim_F(M_{\gamma,D})
 =
 \min\{2,D_{\gamma,d}\}.
$$
\end{thm}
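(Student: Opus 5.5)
The lower bound $\dim_F(M_{\gamma,D})\geq\min\{2,D_{\gamma,d}\}$ is immediate from \cref{thm:finite-perimeter}, since a bounded Lipschitz domain has finite perimeter. For the upper bound, the part $\dim_F(M_{\gamma,D})\leq D_{\gamma,d}$ holds because Fourier dimension is bounded by correlation dimension. Here the correlation dimension of $M_{\gamma,D}$ is almost surely $D_{\gamma,d}$: the restriction contains a ball $B\Subset D$, and locally the multifractal formalism of \cite{Bertacco2023} applies. So only the bound $\dim_F(M_{\gamma,D})\leq 2$ remains, which matters when $D_{\gamma,d}>2$. We must show that almost surely $|\widehat M_{\gamma,D}(\xi)|\geq c|\xi|^{-1}$ along some sequence $|\xi|\to\infty$. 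This fails to give decay faster than $|\xi|^{-2/2}$, since $\dim_F$ is defined through $|\widehat\mu(\xi)|\lesssim|\xi|^{-s/2}$.

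Fix the flat patch. After a rigid motion, assume it is $\Gamma=\{x_d=0\}\cap B(x_0,r)$, with $D\cap B(x_0,r)=\{x_d<0\}\cap B(x_0,r)$ for $r$ small (Lipschitz one-sidedness). Take $\xi=t e_d$ with $t\to\infty$ and localize with a cutoff $\chi\in C_c^\infty(B(x_0,r))$ equal to $1$ near a smaller ball. Split $M_{\gamma,D}=\chi M_{\gamma,D}+(1-\chi)M_{\gamma,D}$. The second piece is $(1-\chi)\1_D M_\gamma$. Away from the patch, however, the boundary of $D$ is only Lipschitz, so its decay is not automatically better than $|\xi|^{-1}$. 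I would therefore choose the direction $e_d$ together with an averaging argument. Integrate by parts in $x_d$ only, writing $e^{-itx_d}=\frac{i}{t}\partial_d e^{-itx_d}$ applied to a mollified version of the chaos at scale $t^{-1}$. The boundary term is $\frac{i}{t}\int_{\partial D}e^{-itx_d}\nu_d\,\rho_t$, where $\rho_t$ is the mollified chaos density, and the bulk term is $O(t^{-1-\epsilon})$ by the torus-type estimates behind \cref{cor:smooth-localization} (needs $D_{\gamma,d}>2$).

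On the flat patch, $e^{-itx_d}\equiv1$, so the boundary contribution there is $\frac{i}{t}\int_\Gamma\chi\,\rho_t\,d\sigma$. This converges, after normalization, to the boundary trace chaos $\int_\Gamma\chi\,dM^{\partial}$. That limit is a $(d-1)$-dimensional GMC, nondegenerate and positive almost surely when $\gamma^2<2(d-1)$. In the other regime I would use a renormalized $\rho_t$ with a power correction, but then the lower bound $|\xi|^{-1}$ would become even stronger, which is fine. The remaining boundary pieces are oscillatory with phase $tx_d$ on non-horizontal or Lipschitz parts. For these I would not try to prove decay. Instead I would pick the direction $\xi=t\,\omega$ with $\omega$ in a small cone around $e_d$, and average over $t\in[T,2T]$. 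Averaging kills non-stationary contributions, while the flat part keeps a nonzero phase-coherent mean $\frac{1}{t}\int_\Gamma\chi\,e^{-it\omega\cdot x}\rho_t$.

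Concretely, the plan is to show $\frac1T\int_T^{2T}t\,\widehat M_{\gamma,D}(te_d)\,dt\to i\int_\Gamma dM^{\partial}+(\text{averaged other terms})\to$ nonzero. From this, $\limsup t|\widehat M_{\gamma,D}(te_d)|>0$ almost surely. The main obstacle is controlling the averaged contribution of the Lipschitz, non-flat part of $\partial D$ and of the rough chaos near it. Averaging in $t$ turns it into $\int_{\partial D\setminus\Gamma}\frac{e^{-iTx_d}-e^{-2iTx_d}}{iTx_d}\,(\cdots)$, which vanishes as $T\to\infty$ by dominated convergence, since $\{x_d=0\}\cap(\partial D\setminus\Gamma)$ carries no boundary chaos mass. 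The last point needs that the boundary measure has no atoms on the remaining level set, which follows if $\overline\Gamma$ is chosen maximal or the level set has zero surface measure. I would first try the case $\gamma^2<2(d-1)$ and then handle the trace degeneracy.
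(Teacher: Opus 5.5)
There is a genuine gap, and it sits exactly at the step you call the main obstacle.

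\textbf{The averaging argument for $\partial D\setminus\Gamma$ does not work.} After averaging over $t\in[T,2T]$, the contribution of $\partial D\setminus\Gamma$ is $\int_{\partial D\setminus\Gamma}g_T\,\nu_d\,\rho_{>\delta(T)}\,d\sigma$ with $g_T(x)=(e^{-iTx_d}-e^{-2iTx_d})/(iTx_d)$. The measure $\rho_{>\delta(T)}\,d\sigma$ itself moves with $T$, so dominated convergence does not apply. You would need weak convergence to a surface chaos on a merely Lipschitz hypersurface, which nothing in your plan supplies.

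More seriously, the resulting limit is, up to a unimodular constant, $\int_{\partial D\cap\{x_d=0\}}\nu_d\,dM^{\partial D}$. The set $\partial D\cap\{x_d=0\}$ outside $\Gamma$ can carry positive surface measure with $\nu_d$ of both signs. For a staircase-shaped polygonal (hence Lipschitz) domain, two coplanar faces can have opposite outward normals and equal areas, so the limit has mean zero. Choosing $\Gamma$ maximal does not show that it is nonzero. ``No atoms'' is also the wrong notion: the issue is positive surface measure of the level set.

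\textbf{Two further problems.} First, mollifying at scale exactly $t^{-1}$ gains nothing from integration by parts, since the volume term is of order $(t\delta)^{-J}$ times an $O(1)$ random factor. You need $\delta=t^{-(1-\eta)}$. The fine part then costs $t^q\delta^{\zeta_d(q)-d}$ in $L^q$, which is precisely where $D_{\gamma,d}>2$ is used. Second, the regime $\gamma^2\geq2(d-1)$ never arises, because $D_{\gamma,d}>2$ forces $\gamma^2<2(d-1)$. Your remark about that regime is also backwards: there $\int_\Gamma\rho_{>\delta}\,d\sigma\to0$, so the bound would get weaker, not stronger.

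\textbf{The missing idea is to localize before integrating by parts.} Multiplication by a fixed $\chi\in C_c^\infty$ cannot worsen Fourier decay (convolve with $\widehat\chi$, as in the proof of \cref{prop:smooth-transfer}). So it suffices to prove $\dim_F(\chi M_{\gamma,D})\leq2$ for some $\chi\geq0$ supported in a chart with coordinates $x=(y,x_d)$ where $D=\{x_d>0\}$, with $\chi(\cdot,0)\not\equiv0$. Then every boundary term from integrating by parts in $x_d$ lies on the hyperplane, where the phase is identically one, and no averaging in $t$ is needed.

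Take $\tau_n=2^n$ and $\delta_n=2^{-\lfloor(1-\eta)n\rfloor}$. Borel--Cantelli applied to the higher boundary terms, the volume term and the fine term gives, almost surely,
$\tau_n\widehat{\chi M_{\gamma,D}}(\tau_ne_d)=-i\int\chi(y,0)\rho_{>\delta_n}(y,0)\,dy+o(1)$.
The main term is an $L^q$-bounded martingale converging to the almost surely positive chaos of $\chi(\cdot,0)$ on the hyperplane.

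\textbf{Comparison with the paper.} Repaired this way, your route is a valid alternative. The paper also localizes ($\mu=\Phi M_{\gamma,D}$) but argues by contradiction. Decay with exponent $s>2$ forces $T_h=h^{-1}M_\gamma((0,h)\times F)\to0$ by \cref{lem:one-sided-mass}. Kahane comparison with the face chaos bounds $T_h$ in $L^q$, so uniform integrability contradicts $\E[T_h]=|F|$. A tail zero--one law then makes the conclusion almost sure. That argument needs only slab moments and a deterministic lemma it reuses for curved caps. Your version would additionally give the leading asymptotics of the Fourier transform in the normal direction.
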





Curvature creates additional tangential cancellation. For $p\geq1$, put
$$
 \alpha_{\gamma,p}
 =
 \big(\sqrt p-\frac{\gamma}{\sqrt2}\big)^2.
$$
We call a bounded $C^2$ convex domain \emph{uniformly strictly convex} if
the principal curvatures of its boundary are bounded below by a positive
constant.

\begin{thm}[Uniform strict convexity]
\label{thm:curved-domain}
Let $d\geq2$, let $U\subset\R^d$ be open and let $D\Subset U$ be a bounded
convex domain with a $C^\infty$ uniformly strictly convex boundary.  Let
$0<\gamma<\sqrt{2d}$ and let $M_\gamma$ be the GMC associated with a
smooth log-correlated Gaussian field on $U$.  Then, almost surely,
$$
 \dim_F(M_{\gamma,D})
 =
 \min\big\{D_{\gamma,d},2+\alpha_{\gamma,d-1}\big\}.
$$
\end{thm}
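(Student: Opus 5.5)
The plan is to prove an upper bound $\dim_F\leq\min\{D_{\gamma,d},2+\alpha_{\gamma,d-1}\}$ and a matching lower bound. The bound $\dim_F\leq D_{\gamma,d}$ is the general one: Fourier dimension is at most correlation dimension, and restriction to an open set does not change the correlation dimension of the chaos. For the lower bound I would write $M_{\gamma,D}=\chi M_\gamma+(\1_D-\chi)M_\gamma$, where $\chi\in C_c^\infty(D)$ equals one away from a $\delta$-neighbourhood of $\partial D$, and choose $\delta=\delta(\xi)$ depending on $|\xi|$. The bulk term $\chi M_\gamma$ decays at rate $D_{\gamma,d}$ by \cref{cor:smooth-localization}, with constants that depend polynomially on $\delta^{-1}$. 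The boundary term should instead be treated by integration by parts. Writing $e^{-i\xi\cdot x}=\frac{i\xi}{|\xi|^2}\cdot\nabla e^{-i\xi\cdot x}$ transfers the oscillation onto $\partial D$. This is best done at the level of a regularized field $X_\varepsilon$ at scale $\varepsilon\approx|\xi|^{-1}$, with the error from regularization controlled exactly as in the proof of \cref{thm:finite-perimeter}. Each integration by parts gains a factor $|\xi|^{-1}$, and the surface term takes the form
$$
 |\xi|^{-1}\int_{\partial D}e^{-i\xi\cdot x}\,\frac{\xi\cdot n(x)}{|\xi|}\,e^{\gamma X_\varepsilon(x)-\frac{\gamma^2}2\E[X_\varepsilon^2]}\,d\sigma(x).
$$
After a second integration by parts tangentially, or directly, this gives $|\xi|^{-2}$ times an oscillatory integral of a $(d-1)$-dimensional "chaos" living on the hypersurface.

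The key point is that the trace of $X$ on the smooth hypersurface $\partial D$ is again a log-correlated field, now in dimension $d-1$. Its GMC with respect to surface measure is subcritical only when $\gamma^2<2(d-1)$. Moreover, uniform strict convexity makes the phase $x\mapsto\xi\cdot x$ on $\partial D$ have exactly two nondegenerate critical points, at the points where $n(x)=\pm\xi/|\xi|$. Stationary phase localizes the surface integral to caps of radius $|\xi|^{-1/2}$ around these points. Away from the caps, repeated tangential integration by parts gives arbitrary decay, up to the usual multifractal losses. Near a cap, the contribution is $|\xi|^{-(d-1)/2}$ times the normalized mass of the trace chaos on a ball of radius $r=|\xi|^{-1/2}$.

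So the exponent is governed by the worst-case mass of a $(d-1)$-dimensional GMC on balls of radius $r$. The second moment and the sup over many directions $\xi$ (a union bound over a $|\xi|^{-C}$-net of frequencies) give $\sup_x \nu(B(x,r))\lesssim r^{\beta-o(1)}$. Here $\beta$ is the almost-sure uniform Hölder exponent, $(\sqrt{d-1}-\gamma/\sqrt2)^2$, in the unrenormalized form appropriate to the regularized field at scale $|\xi|^{-1}$. Combining the cap estimate with the two integrations by parts should produce $|\xi|^{-(2+\alpha_{\gamma,d-1})/2}$ in $|\widehat M_{\gamma,D}|$. This would be exponent $2+\alpha_{\gamma,d-1}$ in the $|\widehat\mu|^2\lesssim|\xi|^{-s}$ convention. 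The bookkeeping has to be checked so that the $r=|\xi|^{-1/2}$ scaling and the $|\xi|^{-(d-1)/2}$ stationary-phase factor combine exactly into $\alpha_{\gamma,d-1}$, since $(\sqrt p-\gamma/\sqrt2)^2$ is precisely the uniform modulus exponent in dimension $p$. When $\gamma^2\geq2(d-1)$ the trace chaos degenerates, and I would instead bound the cap contribution directly by the $d$-dimensional uniform modulus near $\partial D$. The cut-off $\delta$ is then optimized against the bulk term.

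For the upper bound $2+\alpha_{\gamma,d-1}$, I would exhibit a sequence of frequencies $\xi_k=t_k n(x_0)$ with $t_k\to\infty$. Here $x_0\in\partial D$ is a thick point of the trace field of maximal admissible thickness, which exists almost surely with thickness arbitrarily close to the critical value. I would show that for such frequencies the stationary-phase term at $x_0$ dominates, with no cancellation from the opposite critical point and the bulk. A second-moment or Paley–Zygmund argument on the leading term, conditionally on coarse scales, should give this.

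I expect the main obstacle to be exactly this matching upper bound. The leading cap contribution must be shown to be genuinely large, of size $|\xi|^{-(2+\alpha)/2+o(1)}$ along a subsequence. That requires decorrelating it from the rest of $\widehat M_{\gamma,D}$, and showing that the extremal surface mass at scale $|\xi|^{-1/2}$ is not cancelled by the random phase inside the cap. On the lower-bound side, the delicate step is making the integration by parts rigorous for a distribution-valued field. I would handle it through the mollified field and a uniform-in-$\xi$ union bound, as in \cref{thm:finite-perimeter}.
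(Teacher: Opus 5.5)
Your bound $\dim_F\leq D_{\gamma,d}$ is fine. Both halves of the argument for the exponent $2+\alpha_{\gamma,d-1}$, however, have genuine gaps.

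\textbf{Lower bound.} You claim that stationary phase localizes the surface integral to caps of radius $|\xi|^{-1/2}$, and that tangential integration by parts gives arbitrary decay elsewhere. This fails for this amplitude. Whatever split you use, the density reaching $\partial D$ is a coarse density $\rho_{>\delta}$ with cutoff only slightly larger than $|\xi|^{-1}$. Indeed, \cref{lem:domain-fine-part} controls the fine fluctuation at rate $D_{\gamma,d}$ only for $\delta\approx|\xi|^{-1+\eta}$, and at $\delta\approx|\xi|^{-1}$ even the interior integration by parts gains nothing.

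At distance $\lambda$ from $x_\pm(\omega)$ the phase gradient is $\asymp|\xi|\lambda$, while each derivative of the amplitude costs $\delta^{-1}$. One tangential integration by parts therefore gains $(|\xi|\lambda\delta)^{-1}$, which is small only for $\lambda\gg|\xi|^{-\eta}$. On the region $|\xi|^{-1/2}\lesssim\lambda\lesssim|\xi|^{-\eta}$ there is no deterministic cancellation. The trivial bound there, by the mass of a surface ball of radius $|\xi|^{-\eta}$, gives essentially nothing. This is a scale mismatch, not a multifractal loss.

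The missing idea is that the resolution must depend on the distance to the stationary points. At distance $\lambda$, only scales above $r=h/\lambda$, with $h=|\xi|^{-(1-\vartheta)}$, can be treated deterministically; integration by parts then gains $(|\xi|\lambda r)^{-1}=(|\xi|h)^{-1}$. The finer scales must be controlled probabilistically. This is \cref{prop:curved-surface-estimate}:
\begin{itemize}
 \item reveal $Y_t=X_{>e^{-t}}$ while expanding a cutoff of radius $\asymp h/r$ around $x_\pm(\omega)$;
 \item It\^o's formula then splits the surface integral into a drift supported on $\{\lambda\leq|\nabla_S\phi_\omega|\leq2\lambda\}$, handled by tangential integration by parts, and a martingale part;
 \item by the finite range of $k_r$ and the uniform local surface bounds of \cref{lem:marked-frostman-event}, the martingale's bracket grows at rate $\lesssim2^{2n\varepsilon}r^b\min\{1,h/r\}^b\,dt$, hence is $O(n2^{2n\varepsilon}h^b)$ in total.
\end{itemize}
This square-root cancellation at fine scales is what produces $\alpha_{\gamma,d-1}$. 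Without it the surface integral contributes no decay, and you recover only the exponent $2$ of \cref{thm:finite-perimeter}.

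Two smaller points. First, the uniform surface bound with exponent close to $\alpha_{\gamma,d-1}$ needs moments of order $\sqrt{2(d-1)}/\gamma$, as in \cref{lem:marked-frostman-event}; second moments give at best $(d-1-\gamma^2)/2$. Second, the case $\gamma^2\geq2(d-1)$ needs no separate argument, since then $D_{\gamma,d}\leq2$ and \cref{thm:finite-perimeter} already suffices.

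\textbf{Upper bound.} Your route through $\xi_k=t_kn(x_0)$ requires showing that the stationary-phase contribution at a thick point is not cancelled. The potential cancelling terms are the other critical point, the rest of the surface, the interior term and the fine fluctuation. You leave this open, so $\dim_F\leq2+\alpha_{\gamma,d-1}$ is not established.

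The paper avoids every cancellation issue by positivity. By \cref{lem:one-sided-mass}, suppose a positive measure supported in $\{\omega\cdot x\leq a\}$ satisfies $|\widehat\mu(\xi)|\lesssim(1+|\xi|)^{-s/2}$. Then $\mu(\{a-h\leq\omega\cdot x\leq a\})\lesssim h^{s/2}$; the proof tests against $\varphi((u-a)/h)$ with $\widehat\varphi$ vanishing to high order at $0$. Apply this to the supporting caps $C_N(x)$ of normal thickness $N^{-2}$, which have diameter $O(N^{-1})$ and volume $\asymp N^{-(d+1)}$. Decay exponent $s$ then forces $\sup_{x\in\partial D}M_{\gamma,D}(C_N(x))\lesssim N^{-s}$.

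Heavy caps are produced at a single scale for each $N$, rather than at a fixed thick point:
\begin{itemize}
 \item \cref{lem:curved-upper-boundary-maximum} (Ding--Roy--Zeitouni on an $N^{-1}$-grid of $\partial D$, plus a Dudley bound on $\nabla X_{>1/N}$) gives, for any $b<\sqrt{2(d-1)}$, an $\cF_{>1/N}$-measurable $x_N$ with $X_{>1/N}\geq b\log N$ on $C_N(x_N)$;
 \item \cref{lem:curved-upper-fine-cap} shows that the independent fine chaos satisfies $M_{\leq1/N}(C_N(x_N))\geq c_0N^{-(d+1)}$ with conditional probability at least $c_1$.
\end{itemize}
The resulting cap mass $N^{-(d+1)-\gamma^2/2+\gamma b}$ corresponds to the exponent $2+\alpha_{\gamma,d-1}$ as $b\to\sqrt{2(d-1)}$. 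Reverse Fatou gives positive probability. Deleting finitely many scale blocks is a smooth positive multiplier, so Kolmogorov's zero--one law makes the bound almost sure.
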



In dimensions
$2\leq d\leq4$, every uniformly strictly convex domain in
\cref{thm:curved-domain} satisfies
$
 \dim_F(M_{\gamma,D})=D_{\gamma,d}
$
throughout the subcritical range. In other words,
the geometric term $2+\alpha_{\gamma,d-1}$ can be active only when
$d\geq5$.  
For example, in dimension $d=5$ with $\gamma=1$, we have
$$
 D_{1,5}=4,\qquad
 \dim_F(M_{1,B})=2,\qquad
 \dim_F(M_{1,D})=\frac{13}{2}-2\sqrt2\approx 3.672,
$$
where $B$ is a box and $D$ is uniformly strictly convex. Thus both boundary
geometries produce a strict loss, of different sizes, relative to the
smoothly localized chaos.

The surface estimate in the proof also determines the Fourier dimension
of genuine GMC constructed with surface measure on the boundary.

\begin{thm}[GMC on a uniformly strictly convex hypersurface]
\label{thm:curved-surface}
Let $d\geq2$, let $U\subset\R^{d}$ be open, and let
$S\Subset U$ be the $C^\infty$ uniformly strictly convex
boundary of a bounded convex domain. Let $M_\gamma^S$ be the GMC
constructed from a smooth log-correlated Gaussian field on $U$ and
surface measure on $S$. If $0<\gamma<\sqrt{2(d-1)}$, then, almost surely,
$$
 \dim_F(M_\gamma^S)
 =\alpha_{\gamma,d-1}
 =\Big(\sqrt{d-1}-\frac{\gamma}{\sqrt2}\Big)^2.
$$
\end{thm}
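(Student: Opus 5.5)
We prove the two inequalities separately: an upper bound $\dim_F(M^S_\gamma)\le\alpha_{\gamma,d-1}$ from a local (multifractal) obstruction, and a matching lower bound from a second-moment estimate on $\widehat{M^S_\gamma}$. In both, the dimension of the carrier is $n=d-1$, and the Gaussian field restricted to $S$ is log-correlated on an $(d-1)$-manifold with smooth remainder. The lower bound then needs, besides the multifractal input, the curvature-driven decay $|\widehat{\sigma_S}(\xi)|\lesssim|\xi|^{-(d-1)/2}$ of smooth surface densities, which holds by stationary phase because $S$ is uniformly strictly convex.

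\textbf{Lower bound.} I would localize with a finite smooth partition of unity on $S$ and reduce to bounding $\E|\widehat{\chi M^S_\gamma}(\xi)|^{q}$ for $|\xi|$ dyadic, followed by Borel--Cantelli and a net argument over $\xi$ (Lipschitz in $\xi$, since the total mass is finite with all negative and small positive moments). The main device, as in the torus proof, is a finite-range reduction. Decompose the field into a scale-$r$ part $X_r$, which is smooth and very regular on scales $\ll r$, and a finite-range independent part. Choose $r$ related to $|\xi|^{-1}$, and write $\chi M^S_\gamma=\sum_Q$ over surface cells $Q$ of size $r$. Conditionally on the coarse field, the cell contributions are finite-range dependent with mean $\E[\cdot\mid\text{coarse}]=\int_Q e^{-i\xi\cdot x}\chi e^{\gamma X_r-\frac{\gamma^2}{2}\E X_r^2}\,d\sigma$.

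The conditional mean is an oscillatory surface integral with smooth amplitude. Its phase $\xi\cdot x$ on $S$ has nondegenerate critical points only at the two points where the normal is parallel to $\xi$, so stationary phase gives decay $|\xi|^{-(d-1)/2}$ times the local size of the amplitude near those points. The fluctuation part has variance $\sum_Q\E M^S(Q)^2$, controlled by the correlation dimension of chaos on an $(d-1)$-dimensional set. For $\gamma^2\le (d-1)/2$ that dimension is $d-1-\gamma^2$; for larger $\gamma$ it becomes $(\sqrt{2(d-1)}-\gamma)^2=2\alpha_{\gamma,d-1}$. Balancing the two parts yields the exponent for $|\widehat{M^S}(\xi)|^2$:
\[
\min\{d-1,\ 2\alpha_{\gamma,d-1},\ldots\}.
\]
So for $|\widehat{M^S}|$ itself we expect $\alpha_{\gamma,d-1}=\big(\sqrt{d-1}-\gamma/\sqrt2\big)^2$. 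The key point is that the stationary-phase bound is multiplied by the chaos mass of an $|\xi|^{-1/2}$-cap around the critical points. The worst case is when these caps sit at thick points, so we use $q$-th moments of the cap mass with the optimal $q$. This optimization over $q$ in the multifractal moment scaling $\E M(B_\rho)^q\asymp\rho^{\zeta(q)}$, with $\zeta(q)=(d-1+\gamma^2/2)q-\gamma^2q^2/2$, produces exactly the square $\big(\sqrt{d-1}-\gamma/\sqrt2\big)^2$. The optimum is at $q^*=\sqrt{2(d-1)}/\gamma$, which lies above $1$ precisely in the subcritical range.

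\textbf{Upper bound.} The same mechanism gives the obstruction. For $\xi=\lambda\nu(x_0)$, stationary phase shows $\widehat{M^S}(\xi)$ contains the term $c\lambda^{-(d-1)/2}e^{-i\lambda\nu\cdot x_0}\,\times$(chaos density at scale $\lambda^{-1/2}$ near $x_0$), up to an error. Almost surely there exist points $x_0$ of thickness $\alpha$ for every $\alpha<\sqrt{2(d-1)}$. Taking the thickness optimizing $\lambda^{-(d-1)/2}\lambda^{(\gamma\alpha-\alpha^2/2)/2}$ under the dimension constraint $d-1-\alpha^2/2\ge0$ forces a lower bound of order $\lambda^{-\alpha_{\gamma,d-1}/2}$ along a sequence, which gives $\dim_F\le\alpha_{\gamma,d-1}$.

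I expect this upper bound to be the main obstacle. One must show that the remainder and the contributions from the rest of $S$ do not cancel the thick-point stationary-phase term. My first attempt would be to randomize over $\xi$ in a small cone and use an $L^2$ average: average $|\widehat{M^S}(\lambda\omega)|^2$ over $\omega$ near $\nu(x_0)$, which by Plancherel on the sphere localizes to the mass of the cap. This reduces the problem to lower bounds on cap masses at thick points, which follow from the multifractal spectrum.
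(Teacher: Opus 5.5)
Your heuristic is right: $|\widehat{M^S_\gamma}(\xi)|$ should be governed by the mass of a $|\xi|^{-1/2}$-cap around $x_\pm(\omega)$, and $q^*=\sqrt{2(d-1)}/\gamma$ is exactly the moment the paper uses. But in both directions the step you leave open is the real difficulty. \emph{Lower bound.} With coarse cutoff $r=|\xi|^{-1+\eta}$, as you propose, $\rho_{>r}$ has derivatives of size $r^{-1}$. Each tangential integration by parts then gains only $(|\xi|\,r\,|\nabla_S\phi_\omega|)^{-1}$. So the integral localizes only to a $|\xi|^{-\eta}$-neighbourhood of $x_\pm(\omega)$, where the coarse density is still a rough chaos, and ``stationary phase'' gives an exponent of order $\eta$. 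With $r=|\xi|^{-1/2}$ stationary phase is legitimate, but the fine fluctuation estimated cell by cell then gives only $D_{\gamma,d-1}/2$. This equals $\alpha_{\gamma,d-1}$ when $\gamma^2\geq(d-1)/2$, so a single cutoff could suffice there. It is strictly smaller when $\gamma^2<(d-1)/2$, and no intermediate single scale closes the gap. (Also, $D_{\gamma,d-1}$ is already the exponent of $|\widehat\mu|^2$ in the convention $|\widehat\mu(\xi)|\lesssim|\xi|^{-s/2}$, so $\alpha_{\gamma,d-1}$ comes from the coarse term, not from halving $2\alpha_{\gamma,d-1}$. Moreover $\E[M^S_\gamma(Q)^2]=\infty$ once $\gamma^2\geq d-1$.)

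The missing idea is the paper's multiscale martingale. Reveal $\rho_{>e^{-t}}$ continuously while multiplying by a cutoff equal to one where $|\nabla_S\phi_\omega|\leq h/r$, with $h=|\xi|^{-1+\vartheta}$. It\^o's formula splits the surface integral into two parts. The first is a drift from the moving cutoff, supported where $|\nabla_S\phi_\omega|\asymp h/r$ and removed by tangential integration by parts, since $|\xi|\cdot r\cdot(h/r)=|\xi|h\gg1$. The second is a martingale with $d\langle Z\rangle_t\lesssim2^{2n\varepsilon}r^b\min\{1,h/r\}^b\,dt$ on a good event where $N^S_{r,J}(x,u)\lesssim2^{n\varepsilon}u^b$ for all $x\in S$, $\delta\leq r\leq u\leq1$, and $b<\alpha_{\gamma,d-1}$. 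Your $q^*$ enters here, via Doob and a union bound. Integrating over the logarithmically many scales gives $\langle Z\rangle\lesssim n2^{2n\varepsilon}h^b$, hence all moments on that event. Separately, Borel--Cantelli over a net of frequencies fails for the fine part. With your second-moment bound $|\xi|^{-(d-1-\gamma^2)}$, a union bound over $\gtrsim|\xi|^d$ net points would need $d-1-\gamma^2>s+d$, which is impossible. The paper instead bounds weighted $L^{2k}(A_n^\R)$-valued norms through the conditional type inequality, where each cell contributes at most $2^{ns/2}M^S_\gamma(Q)$. It then converts these to pointwise decay, losing only $d/k$.

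\emph{Upper bound.} Isolating a stationary-phase main term, or averaging over a cone, does not control the cancellation you worry about. The identity $\int_{\mathbb S^{d-1}}|\widehat{M^S_\gamma}(\lambda\omega)|^2w(\omega)\,d\sigma_{\mathbb S^{d-1}}(\omega)=\iint\widehat{w\sigma_{\mathbb S^{d-1}}}(\lambda(x-y))\,M^S_\gamma(dx)\,M^S_\gamma(dy)$ has a sign-changing oscillatory kernel, so it does not dominate a squared cap mass. Averages over large frequency regions only see an energy, i.e.\ $D_{\gamma,d-1}$. The missing ingredient is a deterministic one-sided mass bound. If a positive measure $\mu$ is supported in $\{\omega\cdot x\leq\beta\}$ and $|\widehat\mu(\xi)|\lesssim(1+|\xi|)^{-s/2}$, then $\mu(\{\beta-h\leq\omega\cdot x\leq\beta\})\lesssim h^{s/2}$. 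To see this, project onto $\omega$ and test against a bump equal to one on $[-1,0]$, modified on $(0,\infty)$ (where there is no mass) so that its Fourier transform vanishes to high order at $0$. By curvature, $B_S(x,r)$ lies in such a slab of thickness $Cr^2$. So decay forces $\sup_xM^S_\gamma(B_S(x,r))\lesssim r^s$, with no cancellation analysis at all.

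The paper then produces heavy balls directly. Take the maximizer of $X_{>1/N}$ on a boundary lattice (value $\approx\sqrt{2(d-1)}\log N$). A Dudley gradient bound keeps the coarse field large on all of $B_S(x_N,c/N)$. The independent fine mass is $\geq c_0N^{-(d-1)}$ with probability $\geq c_1$. Together these give ball mass $\geq N^{-a}$ for any $a>\alpha_{\gamma,d-1}$ with probability bounded below. Kolmogorov's zero--one law then makes $\dim_F\leq a$ almost sure, since deleting finitely many scales multiplies by a smooth positive function. Finally, your exponent is off. The mass of a $\rho$-ball at a $\kappa$-thick point is $\rho^{\,d-1+\gamma^2/2-\gamma\kappa}$, with $\gamma^2/2$ rather than $\kappa^2/2$, and one must take $\kappa\uparrow\sqrt{2(d-1)}$. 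With your formula the interior optimum $\kappa=\gamma$ only reproduces the Hausdorff dimension $d-1-\gamma^2/2$.
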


Here Fourier decay is measured in the ambient Euclidean space $\R^{d}$. In
particular, for circular chaos the angular Fourier dimension is
$D_{\gamma,1}$, whereas the Euclidean Fourier dimension on
$S^1\subset\R^2$ is $(1-\gamma/\sqrt2)^2$. The same measure therefore
exhibits different exponents for the linear angular phase and the
Euclidean phase restricted to the circle.

\begin{rmk}
These results have an automatic
consequence for the full Fourier spectrum $ \dim_F^\theta(M_{\gamma,D})$, whose definition is recalled in \cref{sec:prelimDim}. Its endpoints are
the Fourier dimension at $\theta=0$ and the Sobolev dimension at $\theta=1$. 
Whenever the Fourier dimension
reaches the correlation obstruction, combining
\eqref{eq:reldim}, the correlation dimension formula and monotonicity in $\theta$ yields
$$ 
\dim_F^\theta(\mu)=D_{\gamma,d},
 \qquad 0\leq\theta\leq1.
$$
This applies to the torus fields of \cref{thm:torus}, the dyadic
fields of \cref{prop:dyadic-finite-range}, and the smooth
localizations of \cref{cor:smooth-localization}. It also applies to the restrictions $M_{\gamma,A}$ in
\cref{thm:finite-perimeter} when $A$ has nonempty interior and
$D_{\gamma,d}\leq2$, and to the uniformly strictly convex domains of
\cref{thm:curved-domain} when
$D_{\gamma,d}\leq2+\alpha_{\gamma,d-1}$.
When a boundary obstruction is active, concavity still gives the lower bound
$$
 \dim_F^\theta(M_{\gamma,D})
 \geq
 (1-\theta)\dim_F(M_{\gamma,D})+\theta D_{\gamma,d}.
$$
Moreover, for domains with a flat open patch, a small modification of the proof of \cref{thm:flat-boundary} shows that,
simultaneously for $0\leq\theta\leq1$,
$$
 \dim_F^\theta(M_{\gamma,D})
 \leq
 \min\{D_{\gamma,d},2+(d-1)\theta\}.
$$
Determining the full Fourier spectrum for more general domains remains an interesting open problem, which we do not pursue here.
\end{rmk}

\subsection{Comparison with previous work and related models}
An early quantitative result on the Fourier transform of GMC was obtained
by Falconer and Jin \cite{FalconerJin2019}.  They proved a positive
Fourier dimension bound for Liouville quantum gravity on a rotund convex
planar domain in a small parameter regime. Their chaos is constructed from
the zero boundary Gaussian free field.  Our domain results instead concern
the restriction of a smooth ambient log-correlated field defined across the
boundary.  The two settings are therefore complementary.

The torus theorem extends the circle equality of
\cite{LinQiuTanI,LinQiuTanII} to every prescribed smooth covariance in
every dimension. The separate dyadic criterion also covers the bounded
continuous covariance remainder of the construction in \cite{ChenLinQiu}.
For Euclidean cubes in the smooth ambient class,
\cref{thm:flat-boundary} shows that the lower bound
$\min\{2,D_{\gamma,d}\}$ of \cite{LinQiuTanII} is sharp.
\cref{thm:finite-perimeter} extends this lower bound to arbitrary
bounded sets of finite perimeter in the same covariance class, while
\cref{thm:curved-domain} determines the Fourier dimension for
uniformly strictly convex domains.

The random exponents appearing here have a standard interpretation in GMC
theory. The quantity $D_{\gamma,d}$ is obtained from the $L^q$-spectrum, while $\alpha_{\gamma,p}$ is the lower endpoint
of the singularity spectrum \cite{Bertacco2023}.
This multifractal interpretation does not, however, subsume our domain
results. Restricting an ambient chaos to domains with nonempty interior
does not change its bulk $L^q$-spectrum, yet flat and curved boundaries can
have different Fourier dimensions. 
The Fourier dimension depends both on the distribution of the
mass and on the oscillatory geometry of the support.

There is a direct deterministic precedent for this phenomenon.  For
Lebesgue measure $\1_D\,dx$, the Gauss--Green formula contributes the
baseline factor $|\xi|^{-1}$.  A planar face makes this estimate sharp in
its normal direction, giving Fourier dimension $2$.  If instead $D$ is a
smooth convex body with everywhere positive Gaussian curvature, stationary
phase gives the sharp decay
$$
 |\widehat{\1_D}(\xi)|
 \lesssim
 |\xi|^{-(d+1)/2},
$$
and hence Fourier dimension $d+1$.
These estimates are classical (see, for example, \cite{Herz1962}, or \cite{BrunaNagelWainger} for cases with more complicated geometry). The flat and uniformly strictly
convex theorems may therefore be viewed as random multifractal counterparts
of the two classical geometric endpoints. The boundary terms $2$ and
$2+\alpha_{\gamma,d-1}$ are random analogues of the classical flat and
curved exponents: formally setting $\gamma=0$ gives $2$ and $d+1$. The
full curved formula does not extend continuously to $\gamma=0$, since the
multifractal bulk obstruction
disappears for Lebesgue measure.

Fourier decay has also been studied for non-Gaussian multiplicative
measures.  Shmerkin and Suomala \cite{ShmerkinSuomala} obtained polynomial Fourier decay and
Salem-type conclusions for classes of
spatially independent martingales, including fractal percolation measures. For canonical Mandelbrot cascades, exact
Fourier dimension formulas were subsequently obtained independently, under
different moment hypotheses, by Chen, Li and Suomala \cite{ChenLiSuomala} and by Chen, Han, Qiu
and Wang
\cite{ChenHanQiuWang}. The latter work introduced the
vector-valued martingale viewpoint later developed in
\cite{LinQiuTanI,LinQiuTanII}. The unified theory of Lin, Qiu and Tan also
gives exact results for canonical Mandelbrot random coverings and
nontrivial lower bounds for Poisson multiplicative chaos and generalized
Mandelbrot cascades.

A particularly close geometric analogue is provided by Mandelbrot
cascades pushed forward to planar curves of nonvanishing curvature.
Under suitable moment hypotheses, their Fourier dimension is almost
surely, on the event that the limiting measure is nonzero, equal to the infimum of their lower local
dimensions \cite{CaiFangQu2026, RyouSuomala2026}.
\cref{thm:curved-surface} gives the corresponding formula for
GMC on uniformly strictly convex hypersurfaces in every dimension.
For the domain restrictions $M_{\gamma,D}$, the ambient chaos assigns
zero mass to $\partial D$. Surface chaos nevertheless enters the proof
through the boundary terms obtained by normal integration by parts.
The surface estimate thus also explains the boundary obstruction
in \cref{thm:curved-domain}.

Other phases of GMC lead to different Fourier questions. Critical
one-dimensional GMC, corresponding to $\gamma=\sqrt2$, has Hausdorff (and hence Fourier) dimension zero \cite{BarralKupiainenNikulaSaksmanWebb}. The meaningful question is therefore qualitative or logarithmic decay. Garban
and Vargas \cite{GarbanVargas} explicitly posed the almost sure Rajchman property as an open
problem. For critical GMC on $[0,1]$ generated by a
$\star$-scale invariant field,
Arguin and Hamdan \cite{ArguinHamdan} proved
that
$
 (\log n)^\alpha\widehat M_{\sqrt2}(n)\to0
$
in probability for every $\alpha<1/4$.  Very recently,
Cai, Chen, Fang and Guo proved the almost sure Rajchman property for the
canonical critical circle chaos \cite{CaiChenFangGuo2026}.

In the purely imaginary regime, the chaos is a complex-valued random
distribution rather than a positive measure.  Bonnefont, Rajam\"{a}ki and
Vargas \cite{BonnefontRajamakiVargas} extended the notion of Fourier dimension to this setting and proved,
for the circle imaginary chaos with $0<\beta<1$, that
$$
 \dim_F(M_{\mathrm{i}\beta})=1-\beta^2
$$
almost surely.  Their dimension formula is
stable under sufficiently regular stationary and nonstationary covariance
perturbations.  For the exact circle kernel they also prove a central limit
theorem for the optimally rescaled coefficients.

A particularly close comparison is provided by holomorphic multiplicative
chaos \cite{NajnudelPaquetteSimm}.  For $0<\theta\leq1$, define its
coefficients by
$$
 \exp\Big(
 \sqrt\theta\sum_{k\geq1}\frac{X_k}{\sqrt k}z^k
 \Big)
 =
 \sum_{n\geq0}A_n^{(\theta)}z^n,
$$
where the $X_k$ are independent standard complex Gaussians. Under the parameter correspondence $\theta=2\gamma^2$, the coefficient
$A_n^{(\theta)}$ is naturally compared with the rescaled real GMC
Fourier coefficient $n^{\gamma^2/2}\widehat M_\gamma(n)$.
This correspondence extends to the limiting distribution when
$\gamma<1/\sqrt2$, equivalently $\theta<1$.  After their respective
deterministic normalizations, both coefficients converge to a complex
Gaussian mixture whose random variance is the total mass of an associated
GMC.  In the notation of real GMC, the random variance is the total mass
$M_{2\gamma}(\T)$.
The real GMC convergence is proved by Garban and Vargas
\cite{GarbanVargas}, while the corresponding HMC convergence throughout
the full range $0<\theta<1$ is established in
\cite{NajnudelPaquetteSimmVu}.

At the critical HMC parameter $\theta=1$, write $A_n=A_n^{(1)}$.  These
coefficients arise as the fixed-index large-matrix limits of secular
coefficients of the circular unitary ensemble.  Atherfold and Najnudel \cite{AtherfoldNajnudel}
prove that $(\log n)^{1/4}A_n$ converges in distribution to a complex
Gaussian mixture whose random variance is the total mass of critical GMC.
Under the aforementioned correspondence, this critical HMC point matches
the Fourier freezing threshold $\gamma=1/\sqrt2$ for real GMC, rather than
the critical real GMC parameter $\gamma=\sqrt2$.

The same critical HMC coefficients also form the model problem studied by
Soundararajan and Zaman \cite{SoundararajanZaman} in connection with random
multiplicative functions.  In the arithmetic setting, Harper
\cite{Harper2020} identified the analogous critical chaos structure in the
low moments of normalized partial sums of random multiplicative functions.
The same critical chaos energy mechanism enters the sharp almost sure
estimates in \cite{Verreault2026}.  Gorodetsky and Wong \cite{GorodetskyWong} obtain analogous
Gaussian-mixture limits for random multiplicative functions.

The GMC--HMC correspondence is therefore quantitative and reaches the
limiting distribution, although no general transfer principle directly
identifies the two sequences.  The connection with random multiplicative
functions remains more structural.  
It would be interesting to
determine whether the finite-range viewpoint developed here can also be
used to study distributional asymptotics of GMC Fourier coefficients. Our methods are robust and can be adapted to other settings such as critical or imaginary GMC as described above. This is part of ongoing work.

Finally, circle GMC also appears in the probabilistic construction of
the Virasoro generators in Liouville conformal field theory
\cite{BGKRV}. For $0<\gamma<\sqrt2$, the $n$th generator differs
from its free-field counterpart by a multiplication operator whose
coefficient is proportional to $\widehat M_\gamma(-n)$, with a factor
independent of $n$.
\cref{thm:torus} therefore
identifies the sharp almost sure polynomial threshold of these random
scalar coefficients after the boundary field is fixed. This gives
quantitative content to the observation of Garban and Vargas that the high
Virasoro modes approach their free-field counterparts
\cite{GarbanVargas}. For $\sqrt2\leq\gamma<2$, the interaction coefficients are instead
defined through Girsanov shifts at the level of quadratic forms,
and their high-mode behaviour is a different but interesting question.

\subsection{Proof overview}
Our proofs use the standard $\star$-scale-invariant construction (see, e.g.,
\cite[Section~2.2]{RhodesVargas2014}), but with a smooth compactly supported
positive-definite kernel, and its periodization on the torus. The fields
have independent increments over disjoint scale intervals. The scale
parameter is continuous, which
will also allow us to reveal the field progressively in the curved
boundary argument. It suffices to work with this model since adding suitable independent smooth Gaussian
fields makes its covariance agree with the prescribed covariance.
The corresponding chaos measures are multiplied by smooth positive
functions, which preserves Fourier dimension.

The correlation dimension formula recalled in \cref{sec:prelimGMC}
gives the upper bound $D_{\gamma,d}$ on the torus and for domain
restrictions. The main task is therefore to prove matching lower bounds,
except when the boundary creates a smaller obstruction. At frequencies
$|\xi|\asymp2^n$, we choose a cutoff $\delta$ slightly larger than
$2^{-n}$ and write, schematically,
$$
 M_\gamma
 =
 \underbrace{M_{>\delta}}_{\text{Coarse}}
 +
 \underbrace{(M_\gamma-M_{>\delta})}_{\text{Fine}}.
$$

\vspace{-0.3em}

On the torus, the coarse density is smooth, and its derivative estimates
allow repeated integration by parts to make its contribution negligible.
For the fine fluctuation, the compact support of the kernel gives
independence between the restrictions of the entire fine field to regions
separated by more than $\delta$. Partitioning space into cubes of side comparable to $\delta$
therefore gives a fixed number of conditionally independent families.
At this single cutoff, we estimate their sums in weighted Fourier spaces
with a conditional independent-sum inequality.
Local GMC moment bounds give summable estimates over annuli,
which is enough to obtain almost sure
Fourier decay. Optimizing the moment exponent gives the lower bound
$D_{\gamma,d}$.

For a Euclidean restriction $M_{\gamma,A}$, the fine fluctuation is
estimated in the same way. The new difficulty is the factor $\1_A$ in
the coarse density. If $A$ has finite perimeter, mollification
approximates $\1_A$ with an $L^1$ error of order $\delta$.
The smooth approximation can again be treated by integration by parts, which
together with the fine estimate gives the lower bound
$\min\{2,D_{\gamma,d}\}$.

For boundaries with a flat open patch, we need to prove that the exponent
$2$ is sharp when $D_{\gamma,d}>2$. The mechanism is clearest for the
cube $[0,1]^d$. If its GMC had Fourier dimension greater than $2$, its
normal projection would have an integrable Fourier transform, hence a
continuous density which vanishes at the endpoint $0$. Its one-sided
averages at $0$ would therefore tend to zero. On the other hand, these
averages have constant positive expectation and, by comparison with the
GMC on the corresponding face, a uniform $L^q$ bound for some $q>1$.
They are therefore uniformly integrable and cannot converge to zero
almost surely. This contradiction is local,
and a smooth cutoff reduces a general flat open patch to exactly the same
argument.


For uniformly strictly convex domains, normal integration by parts
reduces the coarse term to oscillatory surface integrals, with a factor
$|\xi|^{-1}$ from the first integration. The surface phase has two
nondegenerate stationary points. To estimate these integrals, we reveal
the field continuously in scale while expanding a cutoff around the
stationary points. The motion of the cutoff contributes a term supported
where the phase oscillates, which is handled by tangential integration
by parts. The remaining martingale is controlled through its quadratic
variation, using properties of our finite-range model and local surface mass estimates.
This gives the surface exponent $\alpha_{\gamma,d-1}$ and hence the
boundary threshold $2+\alpha_{\gamma,d-1}$. The same surface estimate
also proves the lower bound in \cref{thm:curved-surface}.

The matching curved upper bound follows by testing Fourier decay on
thin caps cut out by supporting hyperplanes. Such decay imposes a
uniform upper bound on their masses. Large values of the coarse field
along the boundary, combined with the independent fine chaos, instead
produce caps too heavy for any Fourier exponent greater than
$2+\alpha_{\gamma,d-1}$, with positive probability. Removing finitely
many scales only multiplies the chaos by a smooth positive function,
so its Fourier dimension is a tail quantity. Kolmogorov's zero--one law
makes this obstruction almost sure. Applying the same coarse--fine
argument to small surface balls gives the surface upper bound
$\alpha_{\gamma,d-1}$.

\subsection{Further geometric questions}
The flat and uniformly strictly convex cases do not exhaust all smooth convex
boundaries. For a simple intermediate example, let $m\geq4$ be even and
consider
$$
 D_m
 =
 \big\{(x',x_d)\in\R^{d-1}\times\R:
 |x'|^m+x_d^2<1\big\}.
$$
The boundary has no flat open patch, but its curvature vanishes at the two
points $(0,\pm1)$. Near either of these points, after translation and
rotation, the boundary is of the form
$$
 x_d=-\frac12|x'|^m+O(|x'|^{2m}).
$$
Thus the contact with the supporting hyperplane has finite order $m$ in
every tangential direction. This is the isotropic finite-type model
familiar from the deterministic theory
\cite{BrunaNagelWainger,Schulz1991}. More generally, one could allow finitely many
isolated points at which the order of contact with the supporting
hyperplane is finite and the same in every tangential direction.
The supporting-cap argument in \cref{sec:curved-upper} suggests that
$$
 \dim_F(M_{\gamma,D_m})
 =
 \min\big\{
 D_{\gamma,d},
 2+\alpha_{\gamma,d-1},
 2+\frac{2(d-1)+\gamma^2}{m}
 \big\}
$$
almost surely. Indeed, at frequency $R$, the cap around either exceptional
point has normal thickness $R^{-1}$ and tangential radius $R^{-1/m}$,
so its deterministic volume is $R^{-1-(d-1)/m}$.  Since the exceptional
point is fixed, the normalization of the coarse field at tangential scale
$R^{-1/m}$ contributes $R^{-\gamma^2/(2m)+o(1)}$.  This leads to the last
exponent above.

This example suggests a broader dependence of Fourier dimension
on the order of contact with supporting hyperplanes.
Establishing the conjectural formula would in particular require
a surface estimate uniform in frequency as the stationary point
approaches a point of vanishing curvature.

Other geometric questions include domains with several curved boundary
components, where an inner boundary need not define a supporting
hyperplane for the whole measure, and the Fourier spectrum of uniformly
strictly convex restrictions when the boundary obstruction is active.
The arguments also suggest two refinements beyond dimension:
the optimal logarithmic correction to the torus decay at
$D_{\gamma,d}$, and an almost sure asymptotic for the largest supporting-cap
mass. The latter would strengthen the heavy-cap estimate used here, but
requires uniform control of unusually small fine chaos masses as well as
large coarse field values.

\subsection{Organization of the paper}

\cref{sec:2} develops the analytic and probabilistic tools used throughout the
paper, including the finite-range model reduction.  \cref{sec:3} proves the torus theorem and concludes with a dyadic
finite-range criterion that covers, in particular, the field of Chen, Lin
and Qiu. \cref{sec:4} treats Euclidean restrictions, proving the general
finite perimeter lower bound, smooth localization, and the sharp result
for domains whose boundary contains a flat open patch. \cref{sec:5}
proves the exact results for uniformly strictly convex domains and their
surface chaos.

Readers interested only in the torus problem may proceed from the relevant
preliminaries in \cref{sec:2} directly to \cref{sec:3}, whose core argument is
contained in a few pages.

\begin{rmk}
This project was initiated before the works of Lin, Qiu and Tan \cite{LinQiuTanI,LinQiuTanII}, and Chen, Lin and Qiu
\cite{ChenLinQiu} on the conjecture of Garban and Vargas and on the Fourier dimension of GMC on tori appeared. An earlier version of this work was
already presented in April and May 2025 in seminars at the University of
Science and Technology of China, Nanjing University, and NYU Shanghai.
Substantial improvements to the results have been obtained since.
\end{rmk}

\section{Preliminaries} \label{sec:2}

\subsection{Notation}
For variables $a$ and $b$, we write $a\lesssim b$ or $a=O(b)$ if
$|a|\leq C|b|$ for a constant $C$ independent of the parameters under
consideration.  If the constant depends on a parameter $k$, we write
$a\lesssim_k b$ or $a=O_k(b)$.  Constants may also depend on the fixed
field and geometric data.  We write $a\asymp b$ if both $a\lesssim b$ and
$b\lesssim a$ hold.

On metric spaces, all sets and functions are understood to be Borel
measurable, and all measures are defined on the Borel $\sigma$-algebra.
For $u>0$, we write
$
 \log_+u=\max\{\log u,0\},
$
and set $\log_+0=0$.  Euclidean and multi-index norms are both denoted by
$|\cdot|$. Indicator functions for a set $A$ are denoted by $\1_A$. We write $\T^d=(\R/2\pi\Z)^d$.  If $\mu$ is a finite measure
on $\T^d$, its Fourier coefficients are
$$
 \widehat\mu(m)
 =
 \int_{\T^d}e^{-im\cdot x}\,\mu(dx),
 \qquad m\in\Z^d.
$$
For a compactly supported finite measure on $\R^d$, we use
$$
 \widehat\mu(\xi)
 =
 \int_{\R^d}e^{-i\xi\cdot x}\,\mu(dx),
 \qquad \xi\in\R^d.
$$

\subsection{Fractal dimensions and weighted Fourier norms}
\label{sec:prelimDim}

We recall the notions of dimension used below.  General references are
\cite{Falconer1997,Falconer2014,Mattila2015}.  Throughout this subsection,
$\mu$ denotes a nonzero finite compactly supported measure.  We
present the definitions on $\R^d$.  On $\T^d$, Hausdorff and correlation
dimensions are defined using the quotient metric
$$
 d_{\T^d}(x,y)
 =
 \min_{\ell\in(2\pi\Z)^d}|x-y+\ell|,
$$ while Fourier and
Sobolev quantities use Fourier coefficients and sums over $\Z^d$. 

The (lower) Hausdorff dimension of $\mu$ is
$$
 \dim_H(\mu)
 =
 \inf\{
 \dim_H(E):
 E\subset\R^d,\ \mu(E)>0
 \}.
$$
For $s>0$, the Riesz $s$-energy of $\mu$ is
$$
 I_s(\mu)
 =
 \iint_{\R^d\times\R^d}
 \frac{\mu(dx)\mu(dy)}{|x-y|^s}.
$$
When $0<s<d$, it has the Fourier representation
$$
 I_s(\mu)
 \asymp_{d,s}
 \int_{\R^d}
 |\widehat\mu(\xi)|^2|\xi|^{s-d}\,d\xi,
$$
see \cite[Theorem~3.10]{Mattila2015}.  This motivates the Sobolev
dimension
$$
 \dim_S(\mu)
 =
 \sup\big\{
 s\in\R:
 \int_{\R^d}
 |\widehat\mu(\xi)|^2
 (1+|\xi|)^{s-d}\,d\xi<\infty
 \big\}.
$$
For $s\geq d$, the integral is a Sobolev regularity condition rather than
the Fourier representation of a Riesz energy.

The Fourier dimension of $\mu$ is
$$
 \dim_F(\mu)
 =
 \sup\big\{
 s\geq0:
 \sup_{\xi\in\R^d}
 (1+|\xi|)^{s/2}|\widehat\mu(\xi)|<\infty
 \big\}.
$$
We do not truncate the Fourier or Sobolev dimension of a measure at the
ambient dimension.  This convention is compatible with the Fourier
spectrum below.  Replacing $|\xi|$ by $1+|\xi|$ only changes the
contribution of bounded frequencies.

The lower correlation dimension is
$$
 \dim_2(\mu)
 =
 \liminf_{r\to0}
 \frac{
\log\displaystyle\int_{\R^d}\mu(B(x,r))\,\mu(dx)
 }{\log r}.
$$
Equivalent definitions using cubes or smooth approximate identities are
standard.  
To relate this
quantity to energy, put
$$
 C_\mu(r)
 =
 \int_{\R^d}\mu(B(x,r))\,\mu(dx).
$$
Tonelli's theorem gives, for every $s>0$,
$$
 I_s(\mu)
 =
 s\int_0^\infty r^{-s-1}C_\mu(r)\,dr.
$$
If $I_s(\mu)<\infty$, then
$
 C_\mu(r)\leq r^sI_s(\mu),
$
and hence $\dim_2(\mu)\geq s$.  Conversely, if
$s<\dim_2(\mu)$, choose $t$ with
$
 s<t<\dim_2(\mu).
$
Then $C_\mu(r)\leq r^t$ for every sufficiently small $r$, and the
preceding integral representation gives $I_s(\mu)<\infty$.  These two
implications show that
$$
 \dim_2(\mu)
 =
 \sup\big(\{s>0:I_s(\mu)<\infty\}\cup\{0\}\big).
$$
Moreover, this supremum never exceeds $d$.  
The Fourier representation of Riesz energy now gives
$$
 \dim_2(\mu)
 =
 \min\{\dim_S(\mu),d\}.
$$
Together with the energy criterion for Hausdorff dimension
\cite[Section~2.5]{Mattila2015}, this gives
\begin{equation}
\label{eq:reldim}
 \min\{\dim_F(\mu),d\}
 \leq
 \dim_2(\mu)
 =
 \min\{\dim_S(\mu),d\}
 \leq
 \dim_H(\mu).
\end{equation}
The definitions also give
$
 \dim_F(\mu)\leq\dim_S(\mu).
$
For $0<s<d$, the same energy relations hold on $\T^d$. 

The estimates below naturally involve the Fourier spectrum. Following
\cite{Fraser2024}, for $\theta\in(0,1]$ and $s\geq0$, set
$$
 \mathcal J_{s,\theta}(\mu)
 =
 \Big(
 \int_{\R^d}
 |\widehat\mu(\xi)|^{2/\theta}
 (1+|\xi|)^{s/\theta-d}\,d\xi
 \Big)^\theta,
$$
and
$$
 \mathcal J_{s,0}(\mu)
 =
 \sup_{\xi\in\R^d}
 |\widehat\mu(\xi)|^2(1+|\xi|)^s.
$$
The Fourier spectrum of $\mu$ is
$$
 \dim_F^\theta(\mu)
 =
 \sup\big\{s\geq0:\mathcal J_{s,\theta}(\mu)<\infty\big\}.
$$
Thus
$$
 \dim_F^0(\mu)=\dim_F(\mu),
 \qquad
 \dim_F^1(\mu)=\dim_S(\mu).
$$
The function
$
 \theta\mapsto\dim_F^\theta(\mu)
$
is nondecreasing and concave on $[0,1]$ by
\cite[Theorem~1.1]{Fraser2024}.
The same proof applies on $\T^d$ with counting measure on $\Z^d$.

For $n\geq0$, set
$$
 A_n^\T
 =
 \{m\in\Z^d:2^n\leq|m|<2^{n+1}\},
 \qquad
 A_n^\R
 =
 \{\xi\in\R^d:2^n\leq|\xi|<2^{n+1}\}.
$$
For every positive integer $k$ and $s\in\R$, define
$$
 \|f\|_{B_{k,s,n}^\T}^{2k}
 =
 \sum_{m\in A_n^\T}
 |f(m)|^{2k}|m|^{ks-d},
 \qquad
 \|f\|_{B_{k,s,n}^\R}^{2k}
 =
 \int_{A_n^\R}
 |f(\xi)|^{2k}|\xi|^{ks-d}\,d\xi.
$$
We omit the superscript when the setting is clear.  The exponent $ks-d$
is the Fourier spectrum weight corresponding to $\theta=1/k$.

The following elementary criterion records how these annular norms control
the Fourier spectrum and pointwise Fourier decay.  Related moment criteria
for random measures and discrete $L^p$ formulations of Fourier spectrum
energies of compactly
supported measures appear in \cite{CarnovaleFraserdeOrellana2024} and \cite{Fraser2024}.

\begin{lem}[Annular Fourier criterion]
\label{lem:weighted-fourier-criterion}
Let $\mu$ be a finite measure on $\T^d$, or a compactly supported finite
measure on $\R^d$.  If
$$
 \sum_{n\geq0}
 \|\widehat\mu\|_{B_{k,s,n}}^{2k}<\infty
$$
for some positive integer $k$ and $s\geq0$, then
$$
 \dim_F^{1/k}(\mu)\geq s
 \qquad\text{and}\qquad
 \dim_F(\mu)\geq s-\frac{d}{k}.
$$
Consequently, if the displayed estimate holds for an unbounded sequence of
integers $k$, then $\dim_F(\mu)\geq s$.
\end{lem}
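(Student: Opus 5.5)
The first assertion follows from summing over annuli. The second needs a way to pass from an $L^{2k}$ bound on an annulus to a pointwise bound, and for that I will use the smoothness of $\widehat\mu$ at unit scale.

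\emph{Sobolev-type bound.} Since the annuli $A_n$ with $n\ge0$ cover $\{|\xi|\geq1\}$ (respectively $\{m\ne0\}$), the hypothesis gives
$$
\int_{|\xi|\geq1}|\widehat\mu(\xi)|^{2k}|\xi|^{ks-d}\,d\xi<\infty.
$$
On this region $|\xi|\asymp 1+|\xi|$, so $|\xi|^{ks-d}\asymp(1+|\xi|)^{ks-d}$. On the unit ball we use $|\widehat\mu|\le\mu(\R^d)$ together with the fact that $(1+|\xi|)^{ks-d}$ is bounded there. Hence $\mathcal J_{s,1/k}(\mu)<\infty$, which gives $\dim_F^{1/k}(\mu)\ge s$. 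The torus case is identical, with $m=0$ being a single bounded term.

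\emph{Pointwise bound.} Define $a_n:=\|\widehat\mu\|_{B_{k,s,n}}^{2k}$. The hypothesis forces $a_n\to0$, so in particular the $a_n$ are bounded.

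On the torus the pointwise bound is immediate. For $m\in A_n^\T$,
$$
|\widehat\mu(m)|^{2k}|m|^{ks-d}\le a_n,
$$
so $|\widehat\mu(m)|\lesssim|m|^{-s/2+d/(2k)}$.

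On $\R^d$ we need local constancy of $\widehat\mu$. Because $\mu$ has compact support, $\widehat\mu$ is Lipschitz with constant $\lesssim\mu(\R^d)$. Fix $\xi$ with $|\xi|\asymp2^n$ large and set $c:=|\widehat\mu(\xi)|$. For $|\eta-\xi|\le c/(2L)$, where $L$ is the Lipschitz constant, we have $|\widehat\mu(\eta)|\ge c/2$.

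The cleaner route avoids any dependence on how small $c$ is. Let $\chi\in C_c^\infty$ equal $1$ on $\supp\mu$. Then $\widehat\mu=\widehat\mu*\widehat\chi$ (up to normalization), and Hölder gives
$$
|\widehat\mu(\xi)|\le\Big(\int|\widehat\mu(\xi-\eta)|^{2k}w(\eta)\,d\eta\Big)^{1/2k}\Big(\int|\widehat\chi|^{(2k)'}w^{-(2k)'/(2k)}\Big)^{1/(2k)'},
$$
where $w(\eta)=(1+|\eta|)^{-N}$. Since $\widehat\chi$ is Schwartz, the second factor is finite.

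In the first factor, split according to the annulus $A_{n'}$ containing $\xi-\eta$. The part $|\eta|\le|\xi|/2$ keeps $\xi-\eta$ in annuli with $n'=n+O(1)$, and there $|\xi-\eta|^{ks-d}\asymp2^{n(ks-d)}$. This contributes $\lesssim 2^{-n(ks-d)}\sum_{|n'-n|\le2}a_{n'}$. The part $|\eta|>|\xi|/2$ is handled with the trivial bound $|\widehat\mu|\le\mu(\R^d)$ and the weight $w$, and contributes $O(2^{-nN'})$ for $N$ large.

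Combining the two parts, $|\widehat\mu(\xi)|^{2k}\lesssim|\xi|^{-(ks-d)}$. Therefore $\sup_\xi(1+|\xi|)^{(s-d/k)/2}|\widehat\mu(\xi)|<\infty$, which gives $\dim_F(\mu)\ge s-d/k$. If $ks\le d$ the claim is trivial, since the weight exponent is then nonpositive and the target dimension is nonpositive.

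\emph{Consequence.} If the hypothesis holds for an unbounded sequence of $k$, the bound $\dim_F(\mu)\ge s-d/k$ holds for each such $k$. Letting $k\to\infty$ gives $\dim_F(\mu)\ge s$.

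The only real obstacle is the Euclidean local-constancy step. The convolution identity $\widehat\mu=\widehat\mu*\widehat\chi$ with a Schwartz $\widehat\chi$, rather than the Lipschitz argument, is what handles it cleanly, because it does not depend on the size of $\widehat\mu(\xi)$.
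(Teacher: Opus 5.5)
Your proposal is correct and follows essentially the paper's argument: the annular decomposition for $\dim_F^{1/k}$, the termwise bound on $\T^d$, and on $\R^d$ the identity $\widehat\mu=(2\pi)^{-d}\widehat\chi*\widehat\mu$ with $\chi\in C_c^\infty$ equal to one on $\supp\mu$, combined with H\"older's inequality. The only cosmetic difference is how the weight is passed through the convolution. The paper does this globally via $(1+|\xi|)^\beta\leq(1+|\xi-\eta|)^\beta(1+|\eta|)^{|\beta|}$, whereas you split into $|\eta|\leq|\xi|/2$ and $|\eta|>|\xi|/2$; this yields the same bound, in a slightly more localized form.
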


\begin{proof}
The first conclusion follows by decomposing
$\mathcal J_{s,1/k}(\mu)$ into dyadic annuli.  The contribution of bounded
frequencies is finite because $\mu$ is finite.

On the torus, every term in the annular sum is bounded by the full sum.
It follows that
$$
 |\widehat\mu(m)|
 \lesssim_{\mu,k,s}
 |m|^{-s/2+d/(2k)},
 \qquad m\neq0.
$$

In the Euclidean case, the hypothesis gives
$$
 \int_{\R^d}
 |\widehat\mu(\xi)|^{2k}
 (1+|\xi|)^{ks-d}\,d\xi<\infty.
$$
Choose $\chi\in C_c^\infty(\R^d)$ equal to one on $\supp\mu$.  Since
$\mu=\chi\mu$, we have $\widehat{\mu}=(2\pi)^{-d}\widehat\chi*\widehat\mu$. For $\beta\in\R$, the elementary estimate
\begin{equation} \label{eq:elemes}
 (1+|\xi|)^\beta
 \leq(1+|\xi-\eta|)^\beta(1+|\eta|)^{|\beta|}
\end{equation}
holds. Coupling it with H\"older's inequality therefore gives
$$
 \sup_{\xi\in\R^d}
 (1+|\xi|)^\beta|\widehat\mu(\xi)|
 \lesssim_{\chi,\beta,k}
 \Big(
 \int_{\R^d}
 |\widehat\mu(\xi)|^{2k}
 (1+|\xi|)^{2k\beta}\,d\xi
 \Big)^{1/(2k)}.
$$
Taking
$
 \beta=s/2-d/(2k)
$
makes the weighted integral over the unit ball finite because $\mu$ is
finite, while its contribution outside the unit ball is comparable to the
annular sum.  This gives the required pointwise decay.
\end{proof}

\subsection{A crash course on Gaussian multiplicative chaos}
\label{sec:prelimGMC}

Gaussian multiplicative chaos, introduced by Kahane \cite{Kahane1985} in connection with
Mandelbrot's model of turbulence, gives a rigorous meaning to the formal
random measure $e^{\gamma X(x)}\,dx$.  Much of its modern
development was driven by Liouville quantum gravity, where it constructs
the random area measure arising from Polyakov's formulation of
two-dimensional quantum gravity
\cite{DuplantierSheffield2011,Polyakov1981,RhodesVargas2014}. It has since
become a central object in random geometry and conformal field theory, with
applications to random matrix theory and analytic number theory
\cite{BerestyckiPowell,BerestyckiWebbWong2018,SaksmanWebb2020}.

Let $X$ be a centered Gaussian field on $\T^d$, or on an open set
$U\subset\R^d$.  We call $X$ \emph{smooth log-correlated} if its covariance
kernel $K$ is smooth away from the diagonal and, locally near the diagonal,
$$
 K(x,y)=\log\frac1{|x-y|}+g(x,y),
$$
where $g$ is smooth.  On the torus, Euclidean distance is replaced locally
by torus distance.

Regularity assumptions on the covariance remainder vary in the literature.
A common general formulation on a bounded domain $V$ assumes
$g\in C(\overline V\times\overline V)$, and hence that $g$ is bounded.  The
Fourier arguments in this paper require the stronger smoothness assumption
above.

The word \emph{locally} is important, since in a sufficiently small coordinate
neighbourhood,
$
 \log(1/|x-y|)=\log_+(1/|x-y|).
$
The ordinary logarithm emphasizes the local singularity and allows the
remainder $g$ to be smooth.  The truncated logarithm is more convenient in
global covariance estimates.  On a fixed compact set the two conventions
differ by a bounded function, which is enough for our purposes. 
The hard cutoff in $\log_+$ is not smooth, however, so a
global decomposition with a bounded or continuous remainder is weaker than
the smooth decomposition assumed in our Fourier theorems.

Let $X_\varepsilon$ be a standard convolution regularization of $X$ (see, e.g., \cite[Section~3.2]{BerestyckiPowell}) and set
$$
 M_{\gamma,\varepsilon}(dx)
 = e^{\gamma X_\varepsilon(x)
 -\frac{\gamma^2}{2}\E[X_\varepsilon(x)^2]}\,dx.
$$
When $0<\gamma^2<2d$, these measures converge in probability, for the vague topology on Radon measures, to a nontrivial measure $M_\gamma$.  Moreover,
$M_{\gamma,\varepsilon}(f)\to M_\gamma(f)$ in $L^1$ for every
$f\in C_c(U)$, and the limit does not depend on the usual choice of
regularization.  This is the \emph{subcritical Gaussian multiplicative
chaos} associated with $X$.  We refer to
\cite{BerestyckiPowell,Kahane1985,RhodesVargas2014} for its
construction and basic properties.  At $\gamma^2=2d$, the normalization
above becomes degenerate and must be replaced by a critical normalization.

If $A\subset U$ is a bounded set with $\overline A\subset U$, we
write
$$
 M_{\gamma,A}=\1_A M_\gamma.
$$
If $S\subset U$ is a smooth $p$-dimensional submanifold, with
$1\leq p<d$, we denote by
$M_\gamma^S$ the GMC constructed from the restricted covariance
$K|_{S\times S}$ and the surface measure $\sigma_S$.  This surface chaos is
not the restriction of the ambient volume chaos to $S$, which almost surely
assigns zero mass to $S$.  Indeed,
$
 \E[M_\gamma(S)]=\operatorname{Leb}_d(S)=0.
$
The surface chaos is subcritical when
$\gamma^2<2p$.

For a $p$-dimensional chaos, the multifractal spectrum, or power-law
exponent, is
$$
 \zeta_p(q)
 =
 \big(p+\frac{\gamma^2}{2}\big)q
 -\frac{\gamma^2}{2}q^2.
$$
The following standard estimate is the scalar moment input used throughout.
We use the convention $M_{\gamma,0}=M_\gamma$.  In the surface case, the
notation denotes the corresponding surface chaos.

\begin{prop}[GMC moments]
\label{prop:gmcmoments}
Let $p\geq1$ be an integer, let $0<\gamma^2<2p$ and let
$0<q<2p/\gamma^2$.  Let $E_0$ be a fixed compact
subset of the domain of a $p$-dimensional volume or surface chaos.
Uniformly over sets $A\subset E_0$ satisfying
$
 \operatorname{diam}(A)\leq r\leq1
$
and over $0\leq\varepsilon\leq r$,
$$
 \E\big[M_{\gamma,\varepsilon}(A)^q\big]
 \lesssim_{\gamma,q,E_0}
 r^{\zeta_p(q)}.
$$
\end{prop}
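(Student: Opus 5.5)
The plan is to compare with a model field whose chaos moments are explicitly computable, and to transfer the bound by Kahane's convexity inequality.

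\emph{Step 1: model field.} Near $E_0$, fix a reference $\star$-scale-invariant field $Y$ on a neighbourhood of $E_0$ (or, in the surface case, a neighbourhood of $E_0$ in $S$), with covariance
$$
 \E[Y(x)Y(y)]=\int_1^\infty \frac{k(u(x-y))}{u}\,du ,
$$
where $k$ is smooth, compactly supported and positive definite. This covariance equals $\log_+(1/|x-y|)$ plus a bounded function. By the smoothness of $g$, and of $S$ in the surface case, the covariance of $X$ restricted to $E_0$ differs from that of $Y$ by a bounded function. The same holds, uniformly in $\varepsilon$, for the regularizations $X_\varepsilon$ and $Y_\varepsilon$ at matched scales.

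Add an independent constant Gaussian $\sqrt{C}\,\Omega$ to one field or the other so that the covariances become ordered. Kahane's convexity inequality, applied to the convex function $u\mapsto u^q$ for $q\geq1$ and to the concave one for $q<1$, then gives
$$
 \E[M^X_{\gamma,\varepsilon}(A)^q]\lesssim \E[M^Y_{\gamma,\varepsilon}(A)^q].
$$
The constant coming from $e^{\gamma\sqrt C\Omega-\gamma^2C/2}$ is a finite Gaussian moment, so this costs only a multiplicative factor.

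\emph{Step 2: scaling for the model.} Take $A$ with $\operatorname{diam}(A)\leq r$ and $A\subset B(x_0,r)$. Split
$$
 Y=Y_{\leq 1/r}+Y_{>1/r},
$$
where the first piece collects the scales $u\leq 1/r$. On a ball of radius $r$, the coarse piece $Y_{\leq1/r}$ is within bounded covariance of a single Gaussian $\Omega_r$ with variance $\log(1/r)+O(1)$. It is independent of the fine piece, and the fine piece rescaled by $r$ has the law of $Y$ itself by exact scale invariance. Therefore
$$
 M^Y_{\gamma,\varepsilon}(A)\overset{d}{\approx} r^p\, e^{\gamma\Omega_r-\frac{\gamma^2}{2}\log(1/r)}\,M^Y_{\gamma,\varepsilon/r}(A/r),
$$
again after a Kahane comparison to absorb the $O(1)$ covariance error. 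Here $\varepsilon/r\leq1$.

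Taking $q$-th moments, the Gaussian factor contributes $r^{\gamma^2q/2-\gamma^2q^2/2}$. Together with $r^{pq}$ this gives exactly $r^{\zeta_p(q)}$.

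\emph{Step 3: uniform moment bound.} It remains to show that $\sup_{0\leq\eta\leq1}\E[M^Y_{\gamma,\eta}(B)^q]<\infty$ for sets $B$ of diameter at most $1$. For $q\leq1$ this follows from Jensen's inequality, since the expectation is at most $|B|^q$.

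For $1<q<2p/\gamma^2$ this is the standard finiteness of positive moments of subcritical chaos, uniform in the regularization. I would either cite it from \cite{RhodesVargas2014,BerestyckiPowell} or prove it by the classical dyadic splitting argument: bound the $q$-th moment of a sum over subcubes using $(\sum a_i)^q\leq$ a combination of $\sum a_i^q$ and cross terms, then iterate the scaling of Step 2 with $\zeta_p(q)>p$. That inequality holds precisely when $q<2p/\gamma^2$, and it makes the resulting geometric series converge.

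I expect this uniform-in-$\eta$ finiteness for $q>1$ to be the main technical point. The comparison and scaling steps are routine once Kahane's inequality is available.
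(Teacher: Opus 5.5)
Your argument is correct and uses the same principle as the paper: Kahane comparison with a reference log-correlated field whose scaling is known. The division of labour is different, though. The paper does not reprove the scaling bound. It cites \cite{RhodesVargas2014,Bertacco2023,BerestyckiPowell} for Euclidean balls and passes from sets of diameter $r$ to balls of radius $r$ by positivity. Its only real work is the surface case. There it pulls back through finitely many graph charts $\phi$, observes that $\log\frac{|u-v|}{|\phi(u)-\phi(v)|}$ is bounded and that the Jacobian $J_\phi$ is bounded above and below, and then compares with a standard $p$-dimensional field.

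Your Steps 2--3 essentially reconstruct the cited theorems via the $\star$-scale-invariant scaling argument. That argument is correct as you wrote it, including the bookkeeping $\varepsilon\mapsto\varepsilon/r$ and the exponent $r^{pq}\cdot r^{-\frac{\gamma^2}{2}(q^2-q)}=r^{\zeta_p(q)}$. This makes the exponent self-contained. You still have to cite or carry out the uniform-in-$\eta$ moment bound for $1<q<2p/\gamma^2$, which is the only genuinely nontrivial input in either route.

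Your sketch needs the paper's extra care in the surface case. A $\star$-scale-invariant field defined on a neighbourhood of $E_0$ \emph{in $S$}, with covariance built from ambient differences $x-y$, is not exactly scale invariant, since $S/r\neq S$. So Step 2 cannot be run on the surface itself. The fix is to work in chart coordinates first:
\begin{enumerate}
\item Pass to chart coordinates in $\R^p$, splitting $A$ among boundedly many charts if needed.
\item Bound $\sigma_S=J_\phi(u)\,du$ by a multiple of Lebesgue measure.
\item Use the bi-Lipschitz property to see that diameters change by bounded factors and that the regularized covariances equal $\log\frac1{|u-v|\vee\varepsilon}+O(1)$.
\item Only then compare with your model $Y$ on all of $\R^p$. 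Its stationarity also handles the arbitrary location of $A/r$.
\end{enumerate}
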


See \cite[Theorems~2.11 and~2.14]{RhodesVargas2014} and
\cite[Propositions~2.3--2.4]{Bertacco2023}.  In
\cite{BerestyckiPowell}, Theorem~3.30 supplies the uniform moment bound for
the approximations, while Theorem~3.27, in particular equation~(3.55),
supplies the scaling relation.

\begin{proof}
The cited results give the estimate for Euclidean balls.  Any set of
diameter at most $r$ is contained in a ball of radius $r$, so positivity
gives the stated formulation. For surface chaos, cover $E_0$ by finitely
many relatively compact graph charts $\phi:V\subset\R^p\to S$. In each
chart,
$$
 c|u-v|\leq|\phi(u)-\phi(v)|\leq C|u-v|,
$$
and hence
$$
\begin{aligned}
 K(\phi(u),\phi(v))
 &=\log\frac1{|u-v|}+h(u,v),\\
 h(u,v)
 &=g(\phi(u),\phi(v))
 +\log\frac{|u-v|}{|\phi(u)-\phi(v)|}.
\end{aligned}
$$
The remainder $h$ is bounded by the distance comparison and compactness.
Moreover,
$$
 d\sigma_S(\phi(u))=J_\phi(u)\,du,
 \qquad
 J_\phi(u)=\sqrt{\det\big(D\phi(u)^{\mathsf T}D\phi(u)\big)}.
$$
This smooth density is bounded above and away from zero on each smaller
chart. Kahane comparison with a standard $p$-dimensional logarithmic
field therefore gives the moment estimate in each chart. Summing over
the finite cover completes the proof.
\end{proof}


The dimension formulas below concern volume chaos on $\T^p$, or on a
fixed open set compactly contained in the domain of the field in $\R^p$.
GMC is almost surely exact-dimensional, and in particular 
$$
 \dim_H(M_\gamma)
 =
 p-\frac{\gamma^2}{2}
 =
 \zeta_p'(1).
$$
Bertacco \cite{Bertacco2023} proved
the multifractal formalism of GMC and identified its almost sure $L^q$-spectrum.
For the torus, small flat coordinate patches preserve distances and
Lebesgue measure, so the covariance has the Euclidean form with a smooth
remainder. Exact dimensionality is local. A finite cover by such patches
also gives the same $L^q$-spectrum: the sums of powers of ball masses in
the patches and on the torus are comparable up to fixed factors and
fixed changes of radius.

With Bertacco's convention, write
$$
 \tau_{M_\gamma}(q)
 =
 \limsup_{r\to0}
 \frac{
 \log
 \sup
 \sum_i M_\gamma(B(x_i,r))^q
 }{-\log r},
$$
where the supremum is over disjoint balls of radius $r$ centered in the
support of the measure.  Its positive branch is
$$
 \tau_{M_\gamma}(q)
 =
 \begin{cases}
 p-\zeta_p(q),
 &0\leq q\leq q_{+,p},\\
 -q\alpha_{\gamma,p},
 &q\geq q_{+,p},
 \end{cases}
$$
where
$$
 q_{+,p}=\frac{\sqrt{2p}}{\gamma},
 \qquad
 \alpha_{\gamma,p}
 =
 \big(\sqrt p-\frac{\gamma}{\sqrt2}\big)^2.
$$

At $q=2$, the standard equivalence with correlation dimension gives,
on taking $p=d$ (see \cite[Section~2.1]{LinQiuTanI}),
$$
 \dim_2(M_\gamma)
 =
 -\tau_{M_\gamma}(2)
 =
 D_{\gamma,d},
$$
where
\begin{equation}
\label{eq:D-gamma-d}
 D_{\gamma,d}
 =
 \begin{cases}
 d-\gamma^2,
 &0<\gamma^2\leq d/2,\\[2mm]
 \big(\sqrt{2d}-\gamma\big)^2,
 &d/2<\gamma^2<2d.
 \end{cases}
\end{equation}
Since $D_{\gamma,d}<d$, \eqref{eq:reldim} also gives
\begin{equation}
\label{eq:dimbound}
 \dim_F(M_\gamma)
 \leq
 \dim_S(M_\gamma)
 =
 \dim_2(M_\gamma)
 =
 D_{\gamma,d}.
\end{equation}
Almost surely, the same correlation and Sobolev dimension identities hold
simultaneously for every set $A\Subset U$ with nonempty interior.
Indeed, take a countable basis of balls compactly contained in $U$ and
work on the probability-one event where the local formulas hold on every
such ball. They also hold on finite unions of these balls, by positivity
and Cauchy--Schwarz for Riesz energies. Given $A$, choose a basis ball
$B\subset A$ and a finite union $V$ of basis balls containing $\overline A$.
Then
$$
 C_{M_{\gamma,B}}(r)
 \leq C_{M_{\gamma,A}}(r)
 \leq C_{M_{\gamma,V}}(r).
$$
The two outside measures have correlation dimension $D_{\gamma,d}$,
so the middle one does too. Since $D_{\gamma,d}<d$, \eqref{eq:reldim}
also gives its Sobolev dimension.

We finally record the two variational identities used in the proofs.  A
direct optimization gives
\begin{equation}
\label{eq:D-variational}
 \frac{D_{\gamma,d}}2
 =
 \sup_{\substack{1<q\leq2\\q<2d/\gamma^2}}
 \frac{\zeta_d(q)-d}{q}.
\end{equation}
Moreover, whenever $\gamma^2<2p$,
$$
 \alpha_{\gamma,p}
 =
 \zeta_p'(q_{+,p})
 =
 \sup_{1<q<2p/\gamma^2}
 \frac{\zeta_p(q)-p}{q}.
$$

\subsection{Finite-range model}
\label{sec:finite-range-model}

Auxiliary fields with scaling properties are commonly used to establish
GMC moment estimates. Here we use the standard $\star$-scale-invariant
covariance construction \cite[Section~2.2]{RhodesVargas2014}, with a smooth
compactly supported kernel. This choice gives smooth coarse fields with
quantitative derivative bounds and exact independence between sufficiently
separated fine fields. The continuous scale parameter also provides the
martingales used in \cref{sec:curved-lower}. On the torus, we use the
periodization of this construction.

Choose a radial positive-definite function $k\in C_c^\infty(\R^d)$,
supported in $B(0,1)$ and normalized by $k(0)=1$. For example, take a
nonzero real-valued radial $\phi\in C_c^\infty(B(0,1/2))$ and set
$$
 k=\frac{\phi*\widetilde\phi}{(\phi*\widetilde\phi)(0)},
 \qquad \widetilde\phi(x)=\phi(-x).
$$
In Euclidean space set $k_t(z)=k(z/t)$, while on the torus set
$$
 k_t(z)=\sum_{\ell\in\Z^d}
 k\big(\frac{z+2\pi\ell}{t}\big),
 \qquad 0<t\leq1.
$$
Periodization preserves positive definiteness, since
$$
 \int_{\T^d}k_t(z)e^{-im\cdot z}\,dz
 =t^d\widehat k(tm)\geq0,
 \qquad m\in\Z^d.
$$
In both settings, $k_t(0)=1$ and $k_t(x-y)=0$ whenever
$\dist(x,y)>t$, where $\dist$ denotes Euclidean or torus distance.

For scale intervals $I\subset(0,1]$ bounded away from zero, let $X_I$ be
jointly defined centered Gaussian fields with covariance
$$
 \E\big[X_I(x)X_J(y)\big]
 =\int_{I\cap J}k_t(x-y)\,\frac{dt}{t}.
$$
Positive definiteness gives the existence of this family. The covariance
formula makes it additive over disjoint scale intervals, with independent
increments. We write
$$
 X_{a,b}=X_{(a,b]},
 \qquad X_\varepsilon=X_{\varepsilon,1},
 \qquad 0<a<b\leq1.
$$
For $a>0$, the field $x\mapsto X_{a,b}(x)$ has an almost surely smooth
version, by the smoothness of its covariance kernel.

The covariance of $X_\varepsilon$ is
$$
 K_\varepsilon(x,y)=\int_\varepsilon^1k_t(x-y)\,\frac{dt}{t}.
$$
Since $k(0)=1$ and $k$ is smooth and compactly supported,
$$
 K_\varepsilon(x,y)
 =\log_+\frac1{\dist(x,y)\vee\varepsilon}+O(1)
$$
uniformly in $x,y$ and $0<\varepsilon<1$. The limiting covariance is
smooth away from the diagonal and has a smooth remainder at the diagonal.
Indeed, write $k(z)=\kappa(|z|)$. For $0<|z|<1$,
$$
 K(z)=\int_{|z|}^1\kappa(u)\,\frac{du}{u}
 =\log\frac1{|z|}+h(z),
$$
where
$$
 h(z)=c_k-\int_0^1\frac{k(tz)-1}{t}\,dt,
 \qquad
 c_k=\int_0^1\frac{\kappa(u)-1}{u}\,du.
$$
Smooth radiality gives $\kappa(u)-1=O(u^2)$, so $c_k$ is finite.
Differentiation under the integral shows that $h$ is smooth. On the torus,
the same calculation applies near the diagonal, where only one periodic
translate contributes. Thus this field and any smooth log-correlated field
have a smooth covariance difference on compact subsets.

For $0<\delta<1$, set $X_{>\delta}=X_{\delta,1}$. Let $X$ and
$X_{\leq\delta}$ denote the limits of $X_{\varepsilon,1}$ and
$X_{\varepsilon,\delta}$, respectively, in the sense of distributions as
$\varepsilon\to 0$. Independence of disjoint scale intervals gives
$$
 X=X_{>\delta}+X_{\leq\delta},
$$
with independent summands. The coarse covariance satisfies
$$
 K_{>\delta}(x,y)=\int_\delta^1k_t(x-y)\,\frac{dt}{t},
 \qquad K_{>\delta}(x,x)=\log\frac1\delta.
$$
Moreover, $|\partial^\alpha k_t(z)|\lesssim_\alpha t^{-|\alpha|}$.
These estimates give the coarse derivative bounds proved in
\cref{lem:localmoments}. For $x\neq y$, the fine covariance is
$$
 K_{\leq\delta}(x,y)=\int_0^\delta k_t(x-y)\,\frac{dt}{t}.
$$
If $\dist(x,y)>\delta$, every integrand vanishes. Consequently, the
restrictions of the entire fine field, and of its chaos, to sets separated
by more than $\delta$ are independent. This is the finite-range property
used below.

We keep track of all the revealed scales by setting
$$
 \cF_{>\delta}=\sigma(X_{>r}:\delta\leq r\leq1),
$$
where $X_{>1}=0$. Put $Y_t=X_{>e^{-t}}$ and
$\cF_t=\cF_{>e^{-t}}$. When using stochastic calculus, we complete this
filtration with its null sets and make it right-continuous. The Gaussian increment estimates
give a version for which every spatial derivative
$\partial_x^\alpha Y_t(x)$ is jointly continuous in $(t,x)$ on
$[0,T]\times E$, for every finite $T$ and compact set $E$.
For each $x$, $Y_t(x)$ is a standard Brownian motion, and
$$
 d\langle Y(x),Y(y)\rangle_t=k_{e^{-t}}(x-y)\,dt.
$$
This is the continuous Gaussian martingale structure used below.

Define the coarse density and measure by
$$
 \rho_{>\delta}(x)
 =e^{\gamma X_{>\delta}(x)
 -\frac{\gamma^2}{2}\E[X_{>\delta}(x)^2]}, \qquad M_{>\delta}(dx)=\rho_{>\delta}(x)\,dx.
$$
Let $M_{\leq\delta}$ be the positive chaos of the fine field.
Independence of the scales gives
$$
 M_\gamma(dx)=\rho_{>\delta}(x)M_{\leq\delta}(dx).
$$
Subcritical convergence in $L^1$ preserves the normalization
$\E[M_{\leq\delta}(B)]=|B|$ for bounded sets $B$.
Since the fine field is independent of $\cF_{>\delta}$,
$$
 \E\big[M_\gamma(dx)\mid\cF_{>\delta}\big]=M_{>\delta}(dx).
$$
We therefore define the centered fine fluctuation by
$$
 \widetilde M_{\leq\delta}
 =M_\gamma-M_{>\delta}
 =\rho_{>\delta}\big(M_{\leq\delta}-dx\big),
 \qquad
 \E\big[\widetilde M_{\leq\delta}\mid\cF_{>\delta}\big]=0.
$$
The Fourier estimates below use the additive decomposition
$M_\gamma=M_{>\delta}+\widetilde M_{\leq\delta}$. The coarse term is
treated by integration by parts, while the centered fine term is estimated
conditionally on the coarse scales. The multiplicative representation
identifies these fine fluctuations and is also used in the lower bounds
for local masses. The same identities hold for surface chaos, with $dx$
replaced by surface measure, provided $\gamma^2<2p$ on a $p$-dimensional
surface.

The next proposition justifies proving the Fourier estimates for this
model.

\begin{prop}[Smooth transfer]
\label{prop:smooth-transfer}
Let $X$ and $Y$ be smooth log-correlated Gaussian fields either on $\T^d$
or on a common open set $U\subset\R^d$, and suppose that the difference of
their covariance kernels is smooth. On $\T^d$, the Fourier dimensions of
their subcritical GMC measures have the same law, and so do their Fourier
spectra. In the Euclidean setting, the same conclusions hold after
restricting both measures to any fixed set $A\Subset U$ of positive
Lebesgue measure. The same conclusions hold for chaos constructed with
surface measure on a fixed compact smooth $p$-dimensional submanifold of
$U$, provided $\gamma^2<2p$.
\end{prop}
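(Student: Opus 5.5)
The plan is to compare $X$ and $Y$ by adding independent smooth Gaussian fields, rather than transforming one field into the other directly. Write $K_X-K_Y=h$, with $h$ smooth on $\T^d\times\T^d$, or on compact subsets of $U\times U$. Since $h$ need not be positive definite, I will not try to show that one covariance dominates the other. Instead I will find a third smooth log-correlated field $Z$ and smooth positive definite kernels $h_1,h_2$ with $K_Z+h_1=K_X$ and $K_Z+h_2=K_Y$. It then suffices to prove the statement for each of the pairs $(Z,X)$ and $(Z,Y)$, because equality in law is transitive.

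For the decomposition I take $Z$ to have covariance $K_X-L$, where $L$ is a smooth positive definite kernel chosen large enough that $L$ and $L-h$ are both positive definite and $K_X-L$ is still a covariance. I first work locally, on a neighbourhood $V$ of $\overline A$ (or of the submanifold), or on all of $\T^d$. One candidate is to take the finite-range model of \cref{sec:finite-range-model} and truncate it at a small scale $\delta$: the kernel $K_{\leq\delta}$ is log-correlated and positive definite. The kernels $K_X-K_{\leq\delta}$ and $K_Y-K_{\leq\delta}$ are smooth, but they are not obviously positive definite.

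A cleaner route is as follows. Let $\Pi_N$ denote the projection onto Fourier modes $|m|\le N$ (on a torus, after periodizing a neighbourhood of $\overline A$ if necessary). The smooth kernel $h$ has rapidly decaying Fourier coefficients in both variables. Meanwhile, the high-frequency part of a log-correlated covariance has coefficients comparable to $|m|^{-d}$. I would therefore set $Z$ to have covariance $(I-\Pi_N)K_X(I-\Pi_N)$ plus a small correction. I expect the main obstacle of the proof to be exactly this step: producing a common log-correlated "core" $Z$ with positive definite smooth complements. If it fails, the fallback is to use the standard Kahane-type decomposition lemma (e.g.\ \cite{BerestyckiPowell}), which states that two log-correlated fields differing by a smooth kernel can, locally, each be written as a common field plus an independent smooth Gaussian field, possibly after splitting the smooth difference into the difference of two positive definite smooth kernels by adding $c\,\phi\otimes\phi$-type terms.

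Once the decomposition $X\overset{d}{=}Z+G_1$, with $G_1$ smooth and independent of $Z$, is in hand, uniqueness of subcritical GMC gives
$$
M^X_\gamma(dx)=e^{\gamma G_1(x)-\frac{\gamma^2}{2}\E[G_1(x)^2]}\,M^Z_\gamma(dx)
$$
in law. The same identity holds for surface chaos when $\gamma^2<2p$. The key deterministic fact to prove is then that multiplying a measure by a positive smooth function $\psi$ preserves its Fourier dimension and Fourier spectrum. For one direction, extend $\psi$ on a neighbourhood of the compact support to $\chi\psi\in C_c^\infty$. Then $\widehat{\psi\mu}=(2\pi)^{-d}\widehat{\chi\psi}*\widehat\mu$, with $\widehat{\chi\psi}$ Schwartz, and the estimate \eqref{eq:elemes} together with Young/H\"older yields $\mathcal J_{s,\theta}(\psi\mu)\lesssim\mathcal J_{s,\theta}(\mu)$. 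On the torus the same argument applies with sums over $\Z^d$. For the reverse inequality, apply the same bound with $1/\psi$, which is smooth because $\psi>0$ on the compact support. This gives $\dim_F^\theta(\psi M^Z)=\dim_F^\theta(M^Z)$ pointwise in the randomness for all $\theta$ simultaneously. Hence the Fourier dimension and the Fourier spectrum of $M^X$ have the same law as those of $M^Z$, and likewise for $Y$. In the Euclidean case, restricting to $A$ commutes with multiplication by $\psi$, so the restricted measures are handled the same way.
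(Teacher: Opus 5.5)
Your deterministic multiplier step ($\widehat{\psi\mu}=(2\pi)^{-d}\widehat{\chi\psi}*\widehat\mu$, \eqref{eq:elemes}, Young's inequality, then $1/\psi$ for the converse) is correct and is the paper's argument. The gap is the decomposition step, which you flag as the main obstacle and leave open. You look for a common \emph{lower} field $Z$: $K_X-K_Z$ and $K_Y-K_Z$ smooth and positive semidefinite, with $K_Z$ itself a log-correlated covariance. Neither candidate produces one.

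The projection candidate fails for nonstationary covariances. $\Pi_NX$ and $(I-\Pi_N)X$ are independent only if $K_X$ commutes with $\Pi_N$. Otherwise $K_X-(I-\Pi_N)K_X(I-\Pi_N)$ contains the cross terms $\Pi_NK_X(I-\Pi_N)+(I-\Pi_N)K_X\Pi_N$. These make its quadratic form negative on $a+tb$, with $a$ in the range of $\Pi_N$, $b$ in the range of $I-\Pi_N$, and $|t|$ large. The unspecified ``small correction'' would also have to absorb the positive part of $h$ at high frequencies while keeping $K_Z\geq0$.

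The fallback, as you state it, is not a standard lemma. It amounts to sandwiching a smoothing operator between $h$ and $K_X$ in the Loewner order. The local decomposition of Junnila--Saksman--Webb, which does write a log-correlated field as an almost $\star$-scale invariant field plus an independent field, gives a remainder that is only H\"older continuous. That is not enough here: H\"older multipliers need not preserve Fourier decay (compare the remark after the proposition).

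The missing observation is that your transitivity argument does not care about the direction of the comparison, and the upward direction is elementary. Split the difference as $K_X-K_Y=H_+-H_-$ with $H_\pm$ smooth and positive semidefinite. On $\T^d$, let $H_-$ be the Fourier multiplier whose symbol is a constant multiple of
$c_m=\frac12\sum_n\big(|\widehat h(m,n)|+|\widehat h(n,m)|\big)$.
This symbol is rapidly decreasing and dominates $\pm h$ by the Schur test. Then set $H_+=h+H_-$. In the Euclidean case, first multiply $h$ by $\psi(x)\psi(y)$ with $\psi\in C_c^\infty(U)$ equal to one near $\overline A$, and work on a large torus.

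This splitting is what your ``$c\,\phi\otimes\phi$-type terms'' should produce. It is the Gaussian completion of \cite[Lemmas~3.1--3.2]{JunnilaSaksmanWebb2019} used in the paper. Then $K_X+H_-=K_Y+H_+$. Take smooth fields $G_X$ and $G_Y$ with covariances $H_-$ and $H_+$, independent of $X$ and $Y$ respectively. This gives $X+G_X\stackrel{(d)}{=}Y+G_Y$, a common \emph{upper} field. Your multiplier argument, applied to the pairs $(X,X+G_X)$ and $(Y,Y+G_Y)$, then finishes the proof exactly as in the paper.
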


\begin{proof}
We use the Gaussian completion construction in
\cite[Lemmas~3.1--3.2]{JunnilaSaksmanWebb2019}. (It is also straightforward to do by hand.) In the Euclidean setting,
we first multiply the covariance difference by $\psi(x)\psi(y)$, where
$\psi\in C_c^\infty(U)$ equals one near $\overline A$. That construction
writes the localized difference as the difference of two
positive-semidefinite kernels. It preserves every Sobolev order, so
these kernels are smooth under our assumptions.
On the torus the same argument uses Fourier series. In the surface case,
we construct the completion in an ambient neighbourhood before restricting
it to the surface.

Write $K_X-K_Y=H_+-H_-$ on the set under consideration, where
$H_+$ and $H_-$ are the smooth positive-semidefinite kernels
obtained above. Choose centered Gaussian fields $G_X$ and $G_Y$
with covariance kernels $H_-$ and $H_+$, independent respectively
of $X$ and $Y$. These fields have almost surely smooth versions.
Since $K_X+H_-=K_Y+H_+$, the completed fields have the same
covariance and hence
$$
 X+G_X\stackrel{(d)}{=}Y+G_Y.
$$
The corresponding chaos measures satisfy
$$
 M_\gamma^{X+G_X}(dx)
 =e^{\gamma G_X(x)-\frac{\gamma^2}{2}\E[G_X(x)^2]}
 M_\gamma^X(dx),
$$
and similarly for $Y$.

It remains to observe that multiplication by a smooth strictly positive
function preserves Fourier decay and the Fourier spectrum. Let $\mu$ be
compactly supported, let $f>0$ be smooth on a neighbourhood $W$ of
$\supp\mu$, and choose $\chi\in C_c^\infty(W)$ equal to one near
$\supp\mu$. Then
$$
 \widehat{f\mu}=(2\pi)^{-d}\widehat{\chi f}*\widehat\mu.
$$
For $\beta\in\R$, applying
the estimate in \eqref{eq:elemes} inside the convolution integral and then using
Young's inequality gives, for $1\leq p\leq\infty$,
$$
 \big\|(1+|\cdot|)^\beta\widehat{f\mu}\big\|_{L^p}
 \leq
 (2\pi)^{-d}
 \big\|(1+|\cdot|)^{|\beta|}\widehat{\chi f}\big\|_{L^1}
 \big\|(1+|\cdot|)^\beta\widehat\mu\big\|_{L^p}.
$$
The first norm on the right is finite because $\widehat{\chi f}$ is
rapidly decreasing. Applying the same argument with $1/f$ near
$\supp\mu$ gives the converse. The case $p=\infty$ preserves every
polynomial Fourier decay estimate, and the finite $p$ cases preserve the
Fourier spectrum energies for positive energy parameters, since bounded
frequencies contribute a finite amount. On the torus, the same proof uses
weighted discrete convolution. Applying this pathwise to the two random
multipliers proves the proposition.
\end{proof}

\begin{rmk}
The smoothness assumption in \cref{prop:smooth-transfer} is
deliberate. A covariance difference which is bounded on the compact set
under consideration is sufficient for two-sided Kahane comparisons and
hence for local moment estimates. Boundedness alone, however, gives no
transfer principle for pointwise Fourier decay. The latter uses the fact
that a smooth Gaussian completion multiplies the chaos by an almost surely
smooth positive density.
\end{rmk}


\subsection{Gaussian and probabilistic tools}

We collect the probabilistic estimates used below. The torus argument
uses only \cref{lem:conditional-type,lem:gaussian-shift}. For a continuous real
local martingale $M$, we denote by $\langle M\rangle$ its bracket, namely
the unique continuous increasing predictable process starting from zero
such that
$
 M_t^2-\langle M\rangle_t
$
is a local martingale.  If $Z=U+iV$ is a continuous complex local
martingale, we use the convention
$$
 \langle Z\rangle_t
 =
 \langle U\rangle_t+\langle V\rangle_t.
$$
We begin with standard martingale inequalities, see for instance
\cite[Chapters~II and~IV]{RevuzYor}.

\begin{lem}[Martingale inequalities]
\label{lem:martingale-inequalities}
Let $T$ be a bounded stopping time.  The following estimates hold.
\begin{enumerate}[label=\textup{(\roman*)}]
 \item If $(Y_t)_{t\geq0}$ is a nonnegative continuous
 submartingale, $q>1$ and $Y_T\in L^q$, then
 $$
  \E\big[\sup_{t\leq T}Y_t^q\big]
  \leq
  \big(\frac{q}{q-1}\big)^q\E[Y_T^q].
 $$

 \item Let $Z=U+iV$ be a continuous complex local martingale with
 $Z_0=0$.  For every $u,v>0$,
 $$
  \PP(
  |Z_T|>u,\ \langle Z\rangle_T\leq v
  )
  \leq
  4e^{-\frac{u^2}{4v}}.
 $$

 \item Under the assumptions of \textup{(ii)}, for every $q>0$,
 $$
  \E\big[
  |Z_T|^q\1_{\{\langle Z\rangle_T\leq v\}}
  \big]
  \lesssim_q
  v^{q/2}.
 $$
\end{enumerate}
\end{lem}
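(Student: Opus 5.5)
The three estimates are standard, and I would prove them in order, deriving (iii) from (ii).

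For (i), since $T$ is bounded, say $T\leq t_0$, I apply Doob's $L^q$ maximal inequality to the stopped process $Y_{t\wedge T}$. This process is again a nonnegative continuous submartingale, by optional stopping for bounded stopping times. I would first prove the inequality on a finite set of times containing $t_0$, using Doob's discrete maximal inequality. Then I take an increasing sequence of such finite sets whose union is dense in $[0,t_0]$. Continuity of paths makes the discrete suprema increase to $\sup_{t\leq T}Y_t$, so monotone convergence gives the claim. The bound is informative only when the right side is finite; the hypothesis $Y_T\in L^q$ ensures this.

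For (ii), I reduce to the real parts. On the event $\{|Z_T|>u\}$, either $|U_T|>u/\sqrt2$ or $|V_T|>u/\sqrt2$. Moreover, $\langle U\rangle_T,\langle V\rangle_T\leq\langle Z\rangle_T$. It therefore suffices to show, for a real continuous local martingale $W$ with $W_0=0$ and $a>0$,
$$
 \PP\big(W_T>a,\ \langle W\rangle_T\leq v\big)\leq e^{-\frac{a^2}{2v}}.
$$
Applying this to $\pm U$ and $\pm V$ with $a=u/\sqrt2$ gives four terms, each at most $e^{-u^2/(4v)}$, which yields the constant $4$.

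For the real bound, fix $\lambda>0$. The process $\mathcal E_t=\exp(\lambda W_t-\tfrac{\lambda^2}{2}\langle W\rangle_t)$ is a positive continuous local martingale, hence a supermartingale by Fatou's lemma. Optional stopping at the bounded time $T$ then gives $\E[\mathcal E_T]\leq1$. On the event in question we have $\mathcal E_T\geq e^{\lambda a-\lambda^2v/2}$, so Markov's inequality bounds its probability by $e^{-\lambda a+\lambda^2v/2}$. Choosing $\lambda=a/v$ gives $e^{-a^2/(2v)}$.

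For (iii), I use the layer-cake formula together with (ii):
$$
 \E\big[|Z_T|^q\1_{\{\langle Z\rangle_T\leq v\}}\big]
 =\int_0^\infty qu^{q-1}\,\PP\big(|Z_T|>u,\ \langle Z\rangle_T\leq v\big)\,du
 \leq 4q\int_0^\infty u^{q-1}e^{-\frac{u^2}{4v}}\,du.
$$
The substitution $u=\sqrt{v}\,w$ turns the last integral into $v^{q/2}$ times a finite constant depending only on $q$.

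None of the steps is a real obstacle. The only points needing care are that optional stopping for the exponential supermartingale requires a bounded stopping time, which is assumed, and that the passage to continuous time in (i) uses path continuity.
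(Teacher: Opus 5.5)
Your proposal is correct and follows the paper's proof essentially step for step. Part (i) is Doob's $L^q$ maximal inequality. Part (ii) uses the exponential supermartingale with $\lambda=a/v$ and a union bound over $\pm U,\pm V$ at level $u/\sqrt2$, which gives the constant $4$. Part (iii) is the layer-cake formula. Your additional argument in (i), passing through the stopped process and finite time sets, only supplies details that the paper leaves to the citation of Doob's inequality.
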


\begin{proof}
Part \textup{(i)} is Doob's $L^q$ maximal inequality.  

For \textup{(ii)}, let $M$ be a continuous real local martingale
with $M_0=0$.  For $\lambda>0$, the exponential process
$$
 \mathcal E_t
 = e^{\lambda M_t-\frac{\lambda^2}{2}\langle M\rangle_t}
 $$
is a nonnegative local martingale and hence a supermartingale. Therefore
$$
 \PP\big(M_T>a,\ \langle M\rangle_T\leq v\big)
 \leq
 \PP\big(
 \mathcal E_T>
 e^{\lambda a-\lambda^2v/2}
 \big)
 \leq e^{-\lambda a+\lambda^2v/2}
$$
Taking $\lambda=a/v$ and applying the same argument to $-M$ gives
\begin{equation}
\label{eq:real-martingale-tail}
 \PP\big(
 |M_T|>a,\ \langle M\rangle_T\leq v
 \big)
 \leq
 2e^{-a^2/(2v)}.
\end{equation}
If $|Z_T|>u$, then either $|U_T|>u/\sqrt2$ or
$|V_T|>u/\sqrt2$.  Moreover,
$
 \langle Z\rangle_T\leq v
$
implies that both real brackets are at most $v$.  A union bound and
\eqref{eq:real-martingale-tail} prove \textup{(ii)}.

Finally, the layer-cake formula and \textup{(ii)} give
$$
\begin{aligned}
 \E\big[
 |Z_T|^q\1_{\{\langle Z\rangle_T\leq v\}}
 \big]
 =
 q\int_0^\infty
 u^{q-1}
 \PP\big(
 |Z_T|>u,\ \langle Z\rangle_T\leq v
 \big)\,du
 \lesssim_q
 v^{q/2},
\end{aligned}
$$
which proves \textup{(iii)}.
\end{proof}

In the proof of \cref{lem:curved-upper-boundary-maximum}, we also use
the following standard consequence of Dudley's entropy inequality
(see for instance \cite[Section~1.3]{AdlerTaylor}).  For a compact pseudometric space $(T,d)$
and $\varepsilon>0$, let $\mathcal N(T,d,\varepsilon)$ denote the smallest
number of $d$-balls of radius $\varepsilon$ needed to cover $T$.

\begin{lem}[Dudley's entropy inequality]
\label{lem:dudley}
Let $(G(x))_{x\in T}$ be a centered separable Gaussian process with
canonical pseudometric
$$
 d_G(x,y)
 =
 (\E[|G(x)-G(y)|^2])^{1/2}.
$$
Set
$
 A^2=\sup_{x\in T}\Var(G(x)).
$
Then
$$
 \E\big[\sup_{x\in T}|G(x)|\big]
 \lesssim
 A+
 \int_0^{2A}
 \sqrt{\log\mathcal N(T,d_G,\varepsilon)}\,d\varepsilon.
$$
In particular, suppose that $T$ is a fixed compact subset of $\R^m$ and,
for some $A,L>0$,
$$
 \sup_{x\in T}\Var(G(x))\leq A^2,
 \qquad
 d_G(x,y)\leq L|x-y|.
$$
Then
$$
 \E\big[\|G\|_{L^\infty(T)}\big]
 \lesssim_{m,T}
 A\sqrt{\log\big(2+\frac{L}{A}\big)}.
$$
\end{lem}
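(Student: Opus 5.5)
The first display is the classical Dudley bound, which I would take from \cite[Section~1.3]{AdlerTaylor} without reproof. The plan is to derive the particular case from it by comparing the canonical pseudometric with the Euclidean metric. Then I would bound the resulting entropy integral by an elementary rescaling.

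\emph{Covering numbers.} The hypothesis $d_G(x,y)\leq L|x-y|$ shows that every Euclidean ball of radius $\varepsilon/L$ centred in $T$ lies inside the $d_G$-ball of radius $\varepsilon$ with the same centre. Hence
$$
 \mathcal N(T,d_G,\varepsilon)\leq\mathcal N(T,|\cdot|,\varepsilon/L).
$$
Since $T$ is a fixed compact subset of $\R^m$, a volume argument gives
$$
 \mathcal N(T,|\cdot|,r)\leq C_{m,T}(1+1/r)^m\qquad\text{for all } r>0.
$$
Together these give
$$
 \log\mathcal N(T,d_G,\varepsilon)\lesssim_{m,T}1+\log\big(1+\frac{L}{\varepsilon}\big).
$$

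\emph{Entropy integral.} I would insert this bound into Dudley's integral and substitute $\varepsilon=Au$, which gives
$$
 \int_0^{2A}\sqrt{\log\mathcal N(T,d_G,\varepsilon)}\,d\varepsilon
 \lesssim_{m,T} A\int_0^2\sqrt{1+\log\big(1+\frac{L}{Au}\big)}\,du .
$$
For $0<u\leq2$, the inequality $1+ab\leq(1+a)(1+b)$ with $a=L/A$ and $b=1/u$ gives
$$
 \log\big(1+\frac{L}{Au}\big)\leq\log\big(1+\frac LA\big)+\log\big(1+\frac1u\big).
$$
Using $\sqrt{x+y}\leq\sqrt x+\sqrt y$, the integral is therefore bounded by
$$
 2\sqrt{1+\log(1+L/A)}+\int_0^2\sqrt{\log(1+1/u)}\,du .
$$
The last integral is a finite absolute constant, since its integrand is $O(\sqrt{\log(1/u)})$ near $0$. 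Finally, $1+\log(1+L/A)\lesssim\log(2+L/A)$, because $\log(2+L/A)\geq\log2>0$. This also absorbs the additive constant and the separate term $A$ in Dudley's bound into $A\sqrt{\log(2+L/A)}$.

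\emph{Supremum versus $L^\infty$ norm.} Since $G$ is separable, $\sup_{x\in T}|G(x)|$ is measurable and bounds $\|G\|_{L^\infty(T)}$, whichever of the two readings of the norm is intended. There is no real obstacle here. The only point needing care is to keep the dependence uniform in $A$ and $L$, and the rescaling $\varepsilon=Au$ together with the splitting of the logarithm does exactly this.
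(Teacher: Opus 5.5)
Your argument is correct and follows the paper's route for the particular case: Euclidean $\varepsilon/L$-nets become $d_G$-nets by the Lipschitz comparison, the volume bound gives $\mathcal N(T,d_G,\varepsilon)\lesssim_{m,T}(1+L/\varepsilon)^m$, and this is substituted into the entropy integral, which you carry out more explicitly than the paper via $\varepsilon=Au$ and the splitting of the logarithm. The one step the paper includes that you skip is the short reduction of the first display to the textbook one-sided bound for $\E[\sup_T G]$: apply Dudley to the increment process $G(x)-G(x_0)$ and to its negative, add $\E[|G(x_0)|]\lesssim A$, and use $\operatorname{diam}(T,d_G)\leq 2A$ to cut the integral off at $2A$.
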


\begin{proof}
Fix $x_0\in T$.  The first estimate is Dudley's entropy inequality applied
to the increment process $G(x)-G(x_0)$ and to its negative, together with
$
 \E[|G(x_0)|]\lesssim A
$
and
$
 \operatorname{diam}(T,d_G)\leq2A.
$

For the second estimate, every Euclidean ball of radius $\varepsilon/L$ is
contained in a ball of radius $\varepsilon$ for the pseudometric $d_G$.
Hence a Euclidean $\varepsilon/L$-net of $T$ is a $d_G$-net, and
$$
 \mathcal N(T,d_G,\varepsilon)
 \lesssim_{m,T}
 \big(1+\frac{L}{\varepsilon}\big)^m.
$$
Substitution into the entropy integral gives the result.
\end{proof}

The next lemma is a special case of the martingale type inequalities for $L^p$ spaces (see
\cite[Section~3.5.d]{HytonenVanNeervenVeraarWeis2016}). Weighted $L^{2k}$ and $\ell^{2k}$ spaces have
type $2$, and hence type $\theta$ for every $1<\theta\leq2$. The general
martingale type inequality is used in the works \cite{ChenLinQiu}
and
\cite{LinQiuTanII} on Fourier dimension. In the conditionally independent case needed here,
symmetrization and Khintchine's inequality give a short proof.

\begin{lem}
\label{lem:conditional-type}
Let $1<\theta\leq2$, let $k$ be a positive integer, and let $B$ be a
weighted $L^{2k}$ or $\ell^{2k}$ space.  Let $(Z_j)_{j\in J}$ be a finite
family in $L^\theta(\Omega,B)$ whose members, conditionally on a $\sigma$-field
$\mathcal G$, are independent and satisfy
$
 \E[Z_j\mid\mathcal G]=0.
$
Then
$$
 \E\Big[
 \big\|\sum_{j\in J}Z_j\big\|_B^\theta
 \,\Big|\,\mathcal G
 \Big]
 \lesssim_{\theta,k}
 \sum_{j\in J}\E[\|Z_j\|_B^\theta\mid\mathcal G].
$$
\end{lem}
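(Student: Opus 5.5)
We first symmetrize, then apply Khintchine's inequality coordinatewise, and finally use a concavity argument that needs $\theta\leq2$. Throughout we work conditionally on $\mathcal G$ via regular conditional distributions. This is legitimate because $B$ is a separable weighted $L^{2k}$ or $\ell^{2k}$ space, so the $Z_j$ are $B$-valued random variables with regular conditional laws. Conditionally on $\mathcal G$, the $Z_j$ are independent and centered, so it is enough to prove the inequality for independent centered $B$-valued $Z_j\in L^\theta$ with a constant depending only on $\theta,k$.

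\textbf{Step 1: Symmetrization.} Let $(Z_j')$ be an independent copy of $(Z_j)$ and $(\varepsilon_j)$ an independent Rademacher sequence. Since $\E[Z_j']=0$, Jensen's inequality applied to the conditional expectation given $(Z_j)$ (the map $x\mapsto\|x\|_B^\theta$ is convex) gives
$$
\E\Big\|\sum_j Z_j\Big\|_B^\theta\leq\E\Big\|\sum_j(Z_j-Z_j')\Big\|_B^\theta .
$$
Each $Z_j-Z_j'$ is symmetric and the summands are independent, so the right-hand side equals $\E\|\sum_j\varepsilon_j(Z_j-Z_j')\|_B^\theta$. Using $|a+b|^\theta\leq2^{\theta-1}(|a|^\theta+|b|^\theta)$ for the norm, this is at most
$$
2^\theta\,\E\Big\|\sum_j\varepsilon_jZ_j\Big\|_B^\theta .
$$

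\textbf{Step 2: Khintchine in $B$.} Fix the $Z_j$ and average over the signs. Kahane--Khintchine equivalence of moments in Banach spaces, or alternatively Jensen followed by the direct estimate below, gives
$$
\E_\varepsilon\Big\|\sum_j\varepsilon_jZ_j\Big\|^\theta\leq\Big(\E_\varepsilon\Big\|\sum_j\varepsilon_jZ_j\Big\|^{2k}\Big)^{\theta/2k},
$$
which is valid since $\theta\leq2\leq2k$. Write $B=L^{2k}(w)$. By Fubini and the scalar Khintchine inequality at each point,
$$
\E_\varepsilon\Big\|\sum_j\varepsilon_jZ_j\Big\|^{2k}
=\int\E_\varepsilon\Big|\sum_j\varepsilon_jZ_j(\xi)\Big|^{2k}w(\xi)\,d\xi
\lesssim_k\int\Big(\sum_j|Z_j(\xi)|^2\Big)^{k}w(\xi)\,d\xi .
$$
For complex values we apply this to the real and imaginary parts separately. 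The right-hand side equals $\big\|(\sum_j|Z_j|^2)^{1/2}\big\|_B^{2k}$.

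\textbf{Step 3: Square function to the $\ell^\theta$ sum.} Since $\theta\leq2$, pointwise $(\sum_j|Z_j|^2)^{1/2}\leq(\sum_j|Z_j|^\theta)^{1/\theta}$. Hence
$$
\Big\|\Big(\sum_j|Z_j|^2\Big)^{1/2}\Big\|_B^\theta\leq\Big\|\sum_j|Z_j|^\theta\Big\|_{L^{2k/\theta}(w)}\leq\sum_j\big\||Z_j|^\theta\big\|_{L^{2k/\theta}(w)}=\sum_j\|Z_j\|_B^\theta .
$$
The second inequality is Minkowski's inequality, which applies because $2k/\theta\geq1$. Taking expectations over the $Z_j$ and combining with Steps 1 and 2 gives the claim with constant $2^\theta C_k^{\theta/2k}$. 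The $\ell^{2k}$ case is identical, with counting measure in place of $w(\xi)\,d\xi$.

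\textbf{Expected obstacle.} The only delicate point is Step 1. Symmetrization requires the $Z_j$ to be centered and independent, and here these properties hold only conditionally on $\mathcal G$. Reducing to the unconditional case via regular conditional probabilities, which is valid in the separable space $B$, handles this. Alternatively, one can construct the copies $Z_j'$ to be conditionally independent given $\mathcal G$ and run the Jensen step with conditional expectations throughout. Integrability is not an issue, because $Z_j\in L^\theta$ and the bound is trivial whenever the right-hand side is infinite.
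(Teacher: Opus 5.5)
Your proposal is correct and follows essentially the same route as the paper: conditional symmetrization with an independent copy and Rademacher signs, H\"older/Jensen in the signs up to the $2k$-th moment, pointwise Khintchine, and then Minkowski. The only cosmetic difference is in the last step. You use the pointwise bound $\|\cdot\|_{\ell^2}\leq\|\cdot\|_{\ell^\theta}$ and then Minkowski in $L^{2k/\theta}$, whereas the paper applies Minkowski in $L^{k}$ to reach $\big(\sum_j\|Z_j\|_{2k}^2\big)^{\theta/2}$ and only then uses $\theta\leq2$ on the scalar sum.
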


\begin{proof}
The weight can be incorporated into the underlying measure, so it is enough
to take $B=L^{2k}$. Condition on $\mathcal G$ and let $(Z_j')_{j\in J}$ be
an independent copy of $(Z_j)_{j\in J}$ under the conditional law. Let
$(\varepsilon_j)_{j\in J}$ be independent Rademacher variables.
Jensen's inequality, symmetry and the triangle inequality give
\begin{align*}
 \E\Big[
 \big\|\sum_jZ_j\big\|_B^\theta
 \,\big|\,\mathcal G
 \Big]
 &\leq
 \E\Big[
 \big\|\sum_j(Z_j-Z_j')\big\|_B^\theta
 \,\big|\,\mathcal G
 \Big]\\
 &\lesssim_\theta
 \E\Big[
 \E_\varepsilon
 \big\|\sum_j\varepsilon_jZ_j\big\|_{2k}^\theta
 \,\big|\,\mathcal G
 \Big].
\end{align*}
For every realization of $(Z_j)$, the Khintchine and Minkowski inequalities
give
\begin{align*}
 \E_\varepsilon
 \big\|\sum_j\varepsilon_jZ_j\big\|_{2k}^\theta
 &\leq
 \big(
 \E_\varepsilon
 \big\|\sum_j\varepsilon_jZ_j\big\|_{2k}^{2k}
 \big)^{\theta/(2k)}\\
 &\lesssim_k
 \big\|\big(\sum_j|Z_j|^2\big)^{1/2}\big\|_{2k}^\theta\\
 &\leq
 \big(\sum_j\|Z_j\|_{2k}^2\big)^{\theta/2}
 \leq
 \sum_j\|Z_j\|_{2k}^\theta.
\end{align*}
Taking the remaining conditional expectation proves the claim.
\end{proof}

We use Kahane's convexity inequality in the following standard form,
see \cite[Theorem~3.19]{BerestyckiPowell}.

\begin{prop}[Kahane's convexity inequality]
\label{prop:kahane}
Let $T$ be a compact metric space, let $\sigma$ be a nonzero finite
measure on $T$, and let $X$ and $Y$ be continuous centered Gaussian fields
on $T$.  Suppose that
$$
 \E[X(x)X(y)]
 \leq
 \E[Y(x)Y(y)]
$$
for every $x,y\in T$.  For $\gamma\in\R$, set
$$
 Z_X
 =
 \int_T e^{\gamma X(x)-\frac{\gamma^2}{2}\E[X(x)^2]}\,\sigma(dx),
$$
and define $Z_Y$ analogously.  If $F:(0,\infty)\to\R$ is convex and
satisfies
$$
 |F(t)|
 \leq
 C(1+t^a+t^{-b}),
 \qquad t>0,
$$
for some $C,a,b\geq0$, then
$$
 \E[F(Z_X)]
 \leq
 \E[F(Z_Y)].
$$
If $F$ is concave and satisfies the same growth condition, the inequality
is reversed.
\end{prop}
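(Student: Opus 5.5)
The plan is to reduce to the case where $X$ and $Y$ are finite-dimensional Gaussian vectors, and there use Gaussian interpolation. This is the classical proof of Kahane's inequality. Continuity of the fields and compactness of $T$ allow approximation of $\sigma$ by finitely supported measures $\sigma_N=\sum_i w_i\delta_{x_i}$ with $\sigma_N\to\sigma$ weakly. Then $Z_X^N\to Z_X$ almost surely, since the integrand is continuous and bounded on $T$.

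For the finite case, set $Z_t=\sqrt t\,X+\sqrt{1-t}\,Y$ with $X,Y$ independent, $t\in[0,1]$. Write
$$
W(t)=\sum_i w_i e^{\gamma Z_t(x_i)-\frac{\gamma^2}{2}\E[Z_t(x_i)^2]},
\qquad \varphi(t)=\E[F(W(t))].
$$
First assume $F$ is $C^2$ with polynomial growth of $F,F',F''$ at $0$ and $\infty$. Gaussian integration by parts then gives
$$
\varphi'(t)=\frac{\gamma^2}{2}\sum_{i,j}w_iw_j\big(\E[X_iX_j]-\E[Y_iY_j]\big)\,\E\big[F''(W(t))\,e_i(t)e_j(t)\big],
$$
where $e_i(t)$ is the normalized exponential at $x_i$. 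Convexity gives $F''\ge0$ and $e_ie_j>0$, and the covariance hypothesis makes every coefficient $\le0$. Hence $\varphi$ is nonincreasing, so $\E F(Z_X)=\varphi(1)\le\varphi(0)=\E F(Z_Y)$. A general convex $F$ with the stated growth is approximated by such smooth convex $F$, for instance by mollification after a change of variables $t=e^u$, or by monotone limits of piecewise-linear convex functions that are then smoothed. The inequality passes to the limit by dominated convergence, which needs control of $\E[W^a]$ and $\E[W^{-b}]$. In the finite case $W$ is a finite sum of lognormals, so $W^{-b}\le (w_1e_1)^{-b}$ is integrable.

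The limit $N\to\infty$ is the main obstacle. It requires uniform integrability of $F(Z_X^N)$, that is, uniform bounds on $\E[(Z_X^N)^a]$ and $\E[(Z_X^N)^{-b}]$. The upper moments are bounded uniformly, since $Z^N\le\|\sigma\|\sup_T e^{\gamma X-\gamma^2\E X^2/2}$ and the supremum of a continuous Gaussian field on a compact set has all exponential moments (Borell--TIS, or Fernique). For negative moments, Jensen applied to the probability measure $\sigma_N/\|\sigma_N\|$ gives
$$
Z^N\ge\|\sigma_N\|\exp\Big(\tfrac{1}{\|\sigma_N\|}\int(\gamma X-\tfrac{\gamma^2}{2}\E X^2)\,d\sigma_N\Big)\ge \|\sigma_N\|\,e^{-\gamma\sup_T|X|-C}.
$$
This again has all moments uniformly in $N$, since $\|\sigma_N\|\to\|\sigma\|>0$. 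Uniform integrability plus almost sure convergence then yields the inequality for $\sigma$. The concave case follows by applying the convex case to $-F$.
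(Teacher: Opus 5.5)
Your proof is correct and is the standard argument: the paper gives no proof of its own and cites \cite[Theorem~3.19]{BerestyckiPowell}, whose proof is the same Gaussian interpolation $\sqrt t\,X+\sqrt{1-t}\,Y$ for finitely supported $\sigma$ and smooth convex $F$ (the diagonal normalization terms cancel exactly as your formula for $\varphi'$ requires), followed by approximation of $F$ and $\sigma$. Your details for the approximations also hold up: the multiplicative mollification $F_\epsilon(t)=\int F(te^{-v})\rho_\epsilon(v)\,dv$ is an average of the convex functions $t\mapsto F(e^{-v}t)$ and keeps the growth bounds. The Fernique/Borell--TIS bound on $\sup_T|X|$, together with $\|\sigma_N\|\to\|\sigma\|>0$, gives the uniform positive and negative moments needed for uniform integrability as $N\to\infty$.
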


In particular, if two covariance kernels differ by at most a fixed
constant in absolute value, adding independent Gaussian constants and
applying \cref{prop:kahane} in both directions gives two-sided
comparison of their moments, with constants depending only on the
covariance bound, the moment exponent and $\gamma$.

We also use the following Gaussian shift identity, sometimes called the
Cameron--Martin formula or Girsanov's lemma. In particular, this corresponds to a tilting of the last coordinate from the more general statement in
\cite[Lemma~2.5]{BerestyckiPowell}.

\begin{lem}[Gaussian shift identity]
\label{lem:gaussian-shift}
Let $G$ be a centered Gaussian vector and let $Z$ be a centered Gaussian
variable jointly Gaussian with $G$. For every nonnegative function
$F$,
$$
 \E\big[
 e^{Z-\tfrac12\Var Z}F(G)\big]
 =
 \E[F\big(G+\Cov(G,Z)\big)].
$$
\end{lem}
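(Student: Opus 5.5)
The plan is to reduce the identity to a statement about Gaussian densities, proved first for $F$ of exponential form and then extended to all nonnegative $F$. Write $G=(G_1,\dots,G_m)$.

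\emph{Reduction.} If $\Var Z=0$, then $Z=0$ almost surely, so $\Cov(G,Z)=0$ and both sides equal $\E[F(G)]$. Otherwise, consider the centered Gaussian vector $W=(G,Z)$ in $\R^{m+1}$. The identity is equivalent to saying that the law of $G$ under the tilted probability
$$
 d\mathbb Q=e^{Z-\frac12\Var Z}\,d\PP
$$
equals the law of $G+\Cov(G,Z)$ under $\PP$. First, $\mathbb Q$ is a probability measure, because
$$
 \E[e^{Z}]=e^{\frac12\Var Z}.
$$

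\emph{Characteristic functions.} Two probability laws on $\R^m$ coincide if their characteristic functions coincide, so it is enough to test $F(g)=e^{i\lambda\cdot g}$ for $\lambda\in\R^m$. Since $\lambda\cdot G+Z$ is complex Gaussian, analytic continuation of the Gaussian moment generating function gives
$$
 \E\big[e^{i\lambda\cdot G+Z}\big]
 =\exp\Big(\tfrac12\Var Z-\tfrac12\lambda^{\mathsf T}\Cov(G)\lambda+i\lambda\cdot\Cov(G,Z)\Big).
$$
This expansion uses
$$
 \E\big[(i\lambda\cdot G+Z)^2\big]
 =-\lambda^{\mathsf T}\Cov(G)\lambda+2i\lambda\cdot\Cov(G,Z)+\Var Z.
$$
Multiplying by $e^{-\frac12\Var Z}$ leaves
$$
 e^{i\lambda\cdot\Cov(G,Z)}\,\E\big[e^{i\lambda\cdot G}\big],
$$
which is exactly the characteristic function of $G+\Cov(G,Z)$. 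Hence the two laws agree.

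\emph{Extension to nonnegative $F$.} Since the two laws are equal, the identity holds for all bounded measurable $F$. For a general nonnegative $F$, apply it to $F\wedge N$ and let $N\to\infty$; monotone convergence on both sides gives the claim, with the value $+\infty$ allowed.

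\emph{Main obstacle.} The only delicate point is justifying the complex-exponent Gaussian formula. I would handle it either by analytic continuation in the real parameter, or directly: decompose $G=\Cov(G,Z)Z/\Var Z+G^\perp$ with $G^\perp$ independent of $Z$, and then compute a one-dimensional Gaussian integral in $Z$ by completing the square. This decomposition also gives an alternative elementary proof that avoids characteristic functions altogether. After conditioning on $G^\perp$, the problem becomes the scalar shift identity
$$
 \E\big[e^{Z-\frac12\sigma^2}h(Z)\big]=\E\big[h(Z+\sigma^2)\big],\qquad \sigma^2=\Var Z,
$$
which follows from the density relation
$$
 e^{z-\sigma^2/2}\varphi_\sigma(z)=\varphi_\sigma(z-\sigma^2).
$$
Under the shift $Z\mapsto Z+\sigma^2$, the vector $G$ moves by
$$
 \frac{\Cov(G,Z)}{\sigma^2}\,\sigma^2=\Cov(G,Z),
$$
as required.
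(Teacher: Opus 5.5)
Your proof is correct. The paper gives no argument of its own here. It quotes the Gaussian Girsanov lemma of Berestycki and Powell (their Lemma~2.5), applied to the vector $(G,Z)$ tilted by its last coordinate.

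Your computation of the characteristic function of $G$ under $e^{Z-\frac12\Var Z}\,d\PP$ is the standard proof of that cited fact, so in substance you follow the same route. You simply make the cited input self-contained.

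Your second route is an equally valid and more elementary alternative that avoids the analytic continuation. It splits off $G^\perp=G-\Cov(G,Z)Z/\Var Z$, which is independent of $Z$ by joint Gaussianity, and uses the density identity $e^{z-\sigma^2/2}\varphi_\sigma(z)=\varphi_\sigma(z-\sigma^2)$. It is the only place where your separate treatment of the case $\Var Z=0$ is actually needed.

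Two cosmetic points. In the characteristic-function step the variable you call complex Gaussian should be $i\lambda\cdot G+Z$, as in your subsequent display. Also, once the two laws coincide, the identity holds directly for every nonnegative Borel $F$, with the value $+\infty$ allowed, so the truncation $F\wedge N$ is harmless but unnecessary.
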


\section{The torus} \label{sec:3}

\subsection{A coarse derivative estimate}
For $0<\delta\leq 1$ and an integer $J\geq0$, define
$$
 N_{\delta,J}
 =
 \sum_{|\alpha|\leq J}\delta^{|\alpha|}
 \int_{\T^d}|\partial^\alpha\rho_{>\delta}(x)|\,dx.
$$
In the Euclidean setting, for a bounded set $E\Subset U$, set
$$
 N_{\delta,J}^E
 =
 \sum_{|\alpha|\leq J}\delta^{|\alpha|}
 \int_E|\partial^\alpha\rho_{>\delta}(x)|\,dx.
$$

\begin{lem}\label{lem:localmoments}
For every $0<\delta\leq1$, $u\geq1$, and integer $J\geq0$,
$$
 \E[N_{\delta,J}^u]
 \lesssim_{d,u,J}
 \delta^{-\frac{\gamma^2}{2}(u^2-u)}.
$$
The same holds for $N_{\delta,J}^E$, for any bounded $E\Subset U$.
\end{lem}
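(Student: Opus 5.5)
The plan is to reduce the bound to a pointwise moment estimate for $\delta^{|\alpha|}|\partial^\alpha\rho_{>\delta}(x)|$, uniform in $x$. Since $N_{\delta,J}$ is a finite sum of integrals over a set of finite measure ($\T^d$, or the bounded set $E$), Minkowski's inequality in $L^u(\Omega)$ gives
$$
 \|N_{\delta,J}\|_{L^u(\Omega)}
 \leq
 \sum_{|\alpha|\leq J}\delta^{|\alpha|}
 \int\big\|\partial^\alpha\rho_{>\delta}(x)\big\|_{L^u(\Omega)}\,dx.
$$
It therefore suffices to show that, uniformly in $x$,
$$
 \E\big[|\delta^{|\alpha|}\partial^\alpha\rho_{>\delta}(x)|^u\big]
 \lesssim_{d,u,\alpha}\delta^{-\frac{\gamma^2}{2}(u^2-u)}.
$$

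For this I expand the derivative with the Faà di Bruno formula. Since $K_{>\delta}(x,x)=\log(1/\delta)$ does not depend on $x$, the normalization is a constant factor. Hence $\partial^\alpha\rho_{>\delta}$ equals $\rho_{>\delta}$ times a finite linear combination of products $\prod_i\gamma\,\partial^{\beta_i}X_{>\delta}(x)$ with $\sum\beta_i=\alpha$ and $|\beta_i|\geq1$. The key Gaussian input is the variance bound for derivatives. Differentiating the covariance gives
$$
 \E\big[|\partial^\beta X_{>\delta}(x)|^2\big]
 =\int_\delta^1(-1)^{|\beta|}\partial^{2\beta}k_t(0)\,\frac{dt}{t}
 \lesssim\int_\delta^1 t^{-2|\beta|}\frac{dt}{t}\lesssim\delta^{-2|\beta|}.
 $$
It follows that $\delta^{|\beta|}\partial^\beta X_{>\delta}(x)$ is a centered Gaussian variable with variance $O(1)$, uniformly in $x$ and $\delta$. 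On the torus the periodization changes nothing near the diagonal. So $\delta^{|\alpha|}\partial^\alpha\rho_{>\delta}(x)=\rho_{>\delta}(x)P(x)$, where $P$ is a polynomial of bounded degree in Gaussian variables of bounded variance.

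To separate the two factors I apply Hölder's inequality with exponents $1+\eta$ and its conjugate. Then
$$
 \E\big[\rho_{>\delta}^u|P|^u\big]
 \leq\E\big[\rho_{>\delta}^{u(1+\eta)}\big]^{1/(1+\eta)}
 \E\big[|P|^{uq'}\big]^{1/q'}.
 $$
The second factor is $O(1)$ by Gaussian hypercontractivity, or simply by Hölder's inequality and the moments of Gaussians. The first factor equals $\delta^{-\frac{\gamma^2}{2}(u^2(1+\eta)-u)}$, which carries an $\eta$-loss.

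To avoid the loss I would instead use the Gaussian shift identity, \cref{lem:gaussian-shift}, with $Z=u\gamma X_{>\delta}(x)$. This gives
$$
 \E[\rho_{>\delta}^u|P|^u]=\E[e^{Z-\frac12\Var Z}]\,e^{\frac{\gamma^2}{2}(u^2-u)\log(1/\delta)}\,\E[|P(\cdot+\text{shift})|^u].
 $$
The shift of $\delta^{|\beta|}\partial^\beta X_{>\delta}(x)$ is $u\gamma\delta^{|\beta|}\partial^\beta_xK_{>\delta}(x,y)|_{y=x}$. By radial symmetry of $k$ this vanishes for odd $|\beta|$, and in general it is bounded by $u\gamma\delta^{|\beta|}\int_\delta^1t^{-|\beta|}dt/t=O(1)$. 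The shifted polynomial thus still has $O(1)$ moments, which gives exactly $\delta^{-\frac{\gamma^2}{2}(u^2-u)}$. The only delicate point is this shift bookkeeping, namely checking that the deterministic shifts stay bounded after multiplication by $\delta^{|\beta|}$. The Euclidean case $N^E_{\delta,J}$ is identical, using $|E|<\infty$.
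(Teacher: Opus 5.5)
Your proposal is correct and follows essentially the same route as the paper: Faà di Bruno using the $x$-independent variance $K_{>\delta}(x,x)=\log(1/\delta)$, the Gaussian shift identity with $Z=u\gamma X_{>\delta}(x)$ to extract exactly $\delta^{-\frac{\gamma^2}{2}(u^2-u)}$, bounded normalized covariances and shifts from the derivative bounds on $k_t$, and then Jensen/Minkowski over $\alpha$ and the spatial integral. The only blemish is that the factor $\E[e^{Z-\frac12\Var Z}]$ in your displayed shift formula equals one and should simply be dropped; the Hölder detour and the parity remark about odd $|\beta|$ are not needed.
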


\begin{proof}
Since $k\in C_c^\infty(\R^d)$, the kernel and its periodization satisfy
$$
 |\partial^\alpha k_t(x)|
 \lesssim_{\alpha,d}t^{-|\alpha|}
$$
uniformly in $0<t\leq1$ and $x$. Hence, whenever
$|\alpha|+|\beta|\geq1$,
\begin{equation}
\label{eq:dercovest}
 |\partial_x^\alpha\partial_y^\beta K_{>\delta}(x,y)|
 \lesssim_{\alpha,\beta,d}
 \int_\delta^1t^{-|\alpha|-|\beta|}\frac{dt}{t}
 \lesssim_{\alpha,\beta,d}
 \delta^{-|\alpha|-|\beta|}.
\end{equation}
Moreover, $k_t(0)=1$ in both settings, so
$
 K_{>\delta}(x,x)=\log1/\delta
$
is independent of $x$. In the Euclidean setting, the auxiliary field is
defined on all of $\R^d$, and only the integral is restricted to $E$.

Since the variance is independent of $x$, the Faà di Bruno formula
shows that, for $1\leq|\alpha|\leq J$, the normalized derivative
$\delta^{|\alpha|}\partial^\alpha\rho_{>\delta}(x)$ is a finite linear
combination of terms
$$
 \rho_{>\delta}(x)
 \prod_{\ell=1}^r
 \delta^{|\beta_\ell|}\partial^{\beta_\ell}X_{>\delta}(x),
 \qquad
 \beta_1+\cdots+\beta_r=\alpha,
 \quad |\beta_\ell|\geq1.
$$
Fix $x$ and put
$$
 \mathbf D
 =
 \big(\delta^{|\beta|}\partial^\beta X_{>\delta}(x)\big)_{1\leq|\beta|\leq J}.
$$
The covariance matrix of $\mathbf D$ and the vector
$\Cov(\mathbf D,X_{>\delta}(x))$ are uniformly bounded by
\eqref{eq:dercovest}. \cref{lem:gaussian-shift} with $G=\mathbf D$ and
$Z=u\gamma X_{>\delta}(x)$ gives
$$
 \E\big[\rho_{>\delta}(x)^uF(\mathbf D)\big]
 =
 \delta^{-\frac{\gamma^2}{2}(u^2-u)}
 \E\big[F\big(\mathbf D+u\gamma\Cov(\mathbf D,X_{>\delta}(x))\big)\big].
$$
The expectation on the right is uniformly bounded when $F$ is the
$u$-th absolute power of any product above. Summing the finitely many terms, and taking $F\equiv1$ when
$\alpha=0$, yields
$$
 \delta^{u|\alpha|}
 \E\big[|\partial^\alpha\rho_{>\delta}(x)|^u\big]
 \lesssim_{d,u,J}
 \delta^{-\frac{\gamma^2}{2}(u^2-u)},
 \qquad 0\leq |\alpha|\leq J.
$$
Finally, applying Jensen's inequality to the finite sum over $\alpha$
and then to the integral over $E$ gives
$$
 \E\big[(N_{\delta,J}^E)^u\big]
 \lesssim_{u,J,E}
 \sum_{|\alpha|\leq J}\delta^{u|\alpha|}
 \int_E\E\big[|\partial^\alpha\rho_{>\delta}(x)|^u\big]\,dx
 \lesssim_{u,E,J}
 \delta^{-\frac{\gamma^2}{2}(u^2-u)}.
$$
The same calculation on $\T^d$ proves the torus estimate.
\end{proof}

\subsection{Proof of \texorpdfstring{\cref{thm:torus}}{Theorem~\getrefnumber{thm:torus}}}

We first treat the finite-range model.  Fix $0<s<D_{\gamma,d}$.  By
\eqref{eq:D-variational}, we may choose $1<q\leq2$ such that
$q<2d/\gamma^2$ and
$
 \zeta_d(q)-d>sq/2.
$
Choose $\eta>0$ sufficiently small that
\begin{equation}
 (1-\eta)(\zeta_d(q)-d)>\frac{sq}{2},
 \label{eq:choice-eta-torus}
\end{equation}
and put
$
 \delta_n=2^{-\lfloor(1-\eta)n\rfloor}.
$
Fix an integer $k\geq1$. 

\smallskip
\noindent\emph{The coarse part.}
For $m\neq0$, choose a coordinate $j=j(m)$ such that
$|m_j|\geq|m|/\sqrt d$. Integrating by parts $J$ times in $x_j$ gives
$$
 |\widehat{M_{>\delta}}(m)|
 \lesssim_{d,J}
 (\delta|m|)^{-J}N_{\delta,J}.
$$
Since $\delta_n|m|\geq2^{\eta n}$ for $m\in A_n^\T$ and
$\sum_{m\in A_n^\T}|m|^{ks-d}\asymp2^{nks}$,
\cref{lem:localmoments} at $u=1$ yields
$$
 \E\big[\|\widehat{M_{>\delta_n}}\|_{B_{k,s,n}}\big]
 \lesssim_{J,k}
 2^{-n(J\eta-s/2)}.
$$
Choosing $J\eta>s/2$ makes these first moments summable.

\smallskip
\noindent\emph{The fine part.}
The additive and multiplicative decompositions give
$$
 \widetilde M_{\leq\delta}(dx)
 =
 \rho_{>\delta}(x)(M_{\leq\delta}(dx)-dx).
$$
Partition $\T^d$ into $O(\delta^{-d})$ cubes $Q$ of side comparable to
$\delta=\delta_n$. Join two cubes when their torus distance is at most
$\delta$. Each cube has a bounded number
of neighbors, independently of $\delta$, so greedy coloring partitions
them into a fixed number of classes. Distinct cubes in each class are
separated by more than $\delta$. For $m\in A_n^\T$, set
$$
 Z_Q(m)
 =
 \int_Q e^{-im\cdot x}\,\widetilde M_{\leq\delta}(dx).
$$
Conditionally on $\cF_{>\delta}$, the coarse density is fixed. 
Thus the $B_{k,s,n}$-valued variables
$Z_Q$ in each class are conditionally independent and centered. Applying
\cref{lem:conditional-type} to each class and summing over the classes
gives
$$
 \E\big[
 \|\widehat{\widetilde M_{\leq\delta}}\|_{B_{k,s,n}}^q
 \mid\cF_{>\delta}
 \big]
 \lesssim_{k,q}
 \sum_Q
 \E[\|Z_Q\|_{B_{k,s,n}}^q\mid\cF_{>\delta}].
$$
Moreover,
$$
\|Z_Q\|_{B_{k,s,n}}
 \lesssim
 2^{ns/2}
 (M_\gamma(Q)
 +\E[M_\gamma(Q)\mid\cF_{>\delta}]).
$$
Taking expectations, using conditional Jensen, and then
\cref{prop:gmcmoments}, we obtain
\begin{align*}
 \E[
 \|\widehat{\widetilde M_{\leq\delta_n}}\|_{B_{k,s,n}}^q]
 &\lesssim_{q,k}
 2^{nsq/2}
 \sum_Q\E[M_\gamma(Q)^q]\\
 &\lesssim
 2^{nsq/2}\delta_n^{-d}
 \delta_n^{\zeta_d(q)}\\
 &\lesssim
 2^{-n((1-\eta)(\zeta_d(q)-d)-sq/2)}.
\end{align*}
By \eqref{eq:choice-eta-torus}, the last bound decays geometrically.
Jensen's inequality therefore gives summable first moments of the fine
annular norms. 

Combining the two parts gives
$$
 \E\Big[\sum_{n\geq0}
 \|\widehat M_\gamma\|_{B_{k,s,n}}\Big]<\infty.
$$
Thus these norms form an $\ell^1$ sequence almost surely, and hence also
an $\ell^{2k}$ sequence.
Taking the countable intersection over $k\geq1$ and applying
\cref{lem:weighted-fourier-criterion} gives
$\dim_F(M_\gamma)\geq s$.  Finally, take $s$ along a countable sequence
increasing to $D_{\gamma,d}$. The reverse bound follows from
\eqref{eq:dimbound}, so $\dim_F(M_\gamma)=D_{\gamma,d}$ for the finite-range model.
\cref{prop:smooth-transfer} gives the same conclusion for the
original smooth log-correlated field, which completes the proof.

\subsection{A dyadic finite-range criterion}
The proof also applies to independent smooth dyadic blocks whose
derivative estimates have polynomial losses in the block index. The
following proposition records this extension and includes the model of
Chen, Lin and Qiu without requiring a smooth covariance remainder.

\begin{prop}[Dyadic finite-range fields]
\label{prop:dyadic-finite-range}
Let $(\psi_j)_{j\geq1}$ be independent centered Gaussian fields on $\T^d$
with almost surely smooth sample paths, and write
$$
 K_j(x,y)=\E[\psi_j(x)\psi_j(y)].
$$
Let $X=\sum_{j\geq1}\psi_j$ be the associated generalized Gaussian field.
Assume that the following properties hold.
\begin{enumerate}[label=\textup{(\roman*)}]
 \item There is $C>0$ such that $K_j(x,y)=0$ whenever
 $d_{\T^d}(x,y)>C2^{-j}$.
 \item For $x\neq y$,
 $$
  \sum_{j\geq1}K_j(x,y)
  =
  \log_+\frac1{d_{\T^d}(x,y)}+g(x,y),
 $$
 where $g$ is bounded and continuous.
 \item For every nonzero multi-index $\alpha$, there are constants
 $A_\alpha, C_\alpha>0$ such that
 $$
  \sup_{x\in\T^d}
  \E\big[|\partial^\alpha\psi_j(x)|^2\big]
  \leq
  C_\alpha j^{A_\alpha}2^{2j|\alpha|}.
 $$
\end{enumerate}
For $0<\gamma<\sqrt{2d}$, let $M_\gamma$ be the subcritical GMC of $X$.
Then, almost surely,
$$
 \dim_F(M_\gamma)=D_{\gamma,d}.
$$
\end{prop}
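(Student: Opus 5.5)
We rerun the torus proof with the continuous scale parameter replaced by the block index. For an integer $N\geq0$, put $X_{>N}=\sum_{j\leq N}\psi_j$ (smooth) and $X_{\leq N}=\sum_{j>N}\psi_j$, which are independent. Define the coarse density $\rho_{>N}=e^{\gamma X_{>N}-\frac{\gamma^2}{2}\E[X_{>N}^2]}$, the fine chaos $M_{\leq N}$, and the $\sigma$-field $\cF_{>N}=\sigma(\psi_1,\dots,\psi_N)$. As in \cref{sec:finite-range-model}, we then have $M_\gamma=\rho_{>N}M_{\leq N}$ and $\E[M_\gamma\mid\cF_{>N}]=\rho_{>N}\,dx$. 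By (i), the restrictions of $X_{\leq N}$, and hence of $M_{\leq N}$, to sets at torus distance more than $C2^{-N}$ are independent. For the upper bound we need $\dim_2(M_\gamma)=D_{\gamma,d}$. By (ii) the covariance differs from $\log_+ 1/d_{\T^d}$ by a bounded term, and Bertacco's $L^q$-spectrum, hence the correlation dimension, only requires a bounded continuous remainder; the alternative is to transfer it via Kahane comparison of ball-mass moments. Then \eqref{eq:reldim} gives $\dim_F\leq D_{\gamma,d}$. The moment bound \cref{prop:gmcmoments} also holds for $M_\gamma$ by Kahane comparison with a bounded remainder.

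For the lower bound, fix $s<D_{\gamma,d}$, choose $q,\eta$ as in \eqref{eq:choice-eta-torus}, and for frequencies in $A_n^\T$ take $N_n=\lfloor(1-\eta)n\rfloor$, so the fine range is $\delta_n\asymp 2^{-N_n}$. The fine part is verbatim. Partition into cubes of side $\asymp C2^{-N_n}$ and split them into a bounded number of colour classes separated by more than $C2^{-N_n}$. Apply \cref{lem:conditional-type} conditionally on $\cF_{>N_n}$, then \cref{prop:gmcmoments}. This gives a bound on $\E\|\cdot\|^q$ decaying geometrically in $n$.

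The main change, and the main obstacle, is the coarse derivative estimate replacing \cref{lem:localmoments}. The variance $\E[X_{>N}(x)^2]$ may now depend on $x$, but it is smooth in $x$, so Faà di Bruno also produces derivatives of the variance. To bound these, Cauchy--Schwarz on $\partial^\alpha\partial^\beta$ of the covariance together with (iii) gives $|\partial_x^\alpha\partial_y^\beta K_{\leq N}|$-type bounds. Summing over $j\leq N$ yields $\E|\partial^\alpha X_{>N}(x)|^2\lesssim N^{A_\alpha}2^{2N|\alpha|}$, and similar bounds hold for cross-covariances and for derivatives of the variance function. Hence the normalized vector $\mathbf D=(2^{-N|\beta|}\partial^\beta X_{>N}(x))$ has covariance and covariance with $X_{>N}(x)$ of size $N^{O(1)}$, rather than $O(1)$. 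Also, $\E[X_{>N}(x)^2]=N\log2+O(1)$ uniformly, by (i)–(ii) and boundedness of $g$ (evaluate $\sum_{j\leq N}K_j(x,x)$ through the off-diagonal sum at distance $\asymp 2^{-N}$; if needed, compare with a point $y$ at that distance using continuity of $K_j$). Applying \cref{lem:gaussian-shift} with $u=1$ then gives
$$
\E\Big[\sum_{|\alpha|\leq J}2^{-N|\alpha|}\int_{\T^d}|\partial^\alpha\rho_{>N}|\Big]\lesssim_J N^{B_J}
$$
for some $B_J$, since the shifted Gaussian moments are polynomial in $N$.

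The polynomial loss is harmless. Integrating by parts $J$ times gives coarse annular norms bounded in expectation by $2^{ns/2}2^{-J\eta n}N_n^{B_J}$, which is summable once $J\eta>s/2$. Combining both parts, \cref{lem:weighted-fourier-criterion} over all $k$ and then letting $s\uparrow D_{\gamma,d}$ completes the proof. No smooth transfer is needed, since we work directly with $X$.
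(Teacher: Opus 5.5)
Your proposal is essentially the paper's proof. It uses the same block filtration, the same cube colouring with \cref{lem:conditional-type} and \cref{prop:gmcmoments} for the fine part, the same Fa\`a di Bruno and Gaussian shift computation at first moment (with polynomial losses in $N$) for the coarse part, the choice $N=\lfloor(1-\eta)n\rfloor$ with $J\eta>s/2$, and the correlation-dimension upper bound.

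The only step to tighten is the variance bound. Your asymptotic $\E[X_{>N}(x)^2]=N\log2+O(1)$ is not justified by continuity of $K_j$. Comparing with a point at distance $\asymp2^{-N}$ via (iii) only gives a polynomial error. A polynomial bound is all your Cauchy--Schwarz step needs, though. The paper obtains it block by block: choosing $d_{\T^d}(x,y)=(C+1)2^{-j}$ gives $K_j(x,y)=0$, hence $K_j(x,x)\leq\E[|\psi_j(x)-\psi_j(y)|^2]\lesssim j^{A}$, and summing over $j\leq N$ gives $\sup_x\E[X_{>N}(x)^2]\lesssim(1+N)^{A+1}$.
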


\begin{proof}
Put $Y_N=\sum_{j=1}^N\psi_j$, $V_N(x)=\E[Y_N(x)^2]$, and
$
 \rho_N(x)=e^{\gamma Y_N(x)-\frac{\gamma^2}{2}V_N(x)}.
$
The remaining field $X-Y_N$ is independent of the first $N$ blocks and is
log-correlated with a bounded continuous remainder. The mean of its subcritical chaos is
Lebesgue measure, so the multiplicative decomposition gives
$$
 \E[M_\gamma(dx)\mid\cF_N]=\rho_N(x)\,dx,
 \qquad \cF_N=\sigma(\psi_1,\ldots,\psi_N).
$$
In particular, these conditional expectations form the uniformly
integrable block martingale converging to $M_\gamma$.

Fix $0<s<D_{\gamma,d}$ and choose $q$ and $\eta$ as in the proof of
\cref{thm:torus}. Fix also an integer $k\geq1$.
By \textup{(i)}, the field formed from the blocks with $j>N$ has dependence
range $O(2^{-N})$. The cube coloring and conditional independent-sum
estimate used above therefore give
$$
 \E\big[
 \|\widehat{M_\gamma-\rho_N\,dx}\|_{B_{k,s,n}}^q
 \big]
 \lesssim_{k,q}
 2^{nsq/2}\sum_Q\E[M_\gamma(Q)^q]
 \lesssim
 2^{nsq/2}2^{-N(\zeta_d(q)-d)},
$$
where the cubes have side comparable to $2^{-N}$ and the last step follows
from \textup{(ii)} and \cref{prop:gmcmoments}.

For the coarse density, first note that \textup{(i)} and \textup{(iii)}
give a polynomial bound on the block variances. For all sufficiently
large $j$, choose $y$ with
$d_{\T^d}(x,y)=(C+1)2^{-j}$. Since $K_j(x,y)=0$,
$$
 K_j(x,x)
 \leq \E\big[|\psi_j(x)-\psi_j(y)|^2\big]
 \lesssim 2^{-2j}\sup_z\E\big[|\nabla\psi_j(z)|^2\big]
 \lesssim (1+j)^A
$$
for some $A>0$. Absorbing the finitely many remaining blocks gives
$$
 \sup_x V_N(x)\lesssim(1+N)^{A+1}.
$$
Independence and \textup{(iii)} also imply, for every fixed $J$ and
$1\leq|\alpha|\leq J$,
$$
 2^{-2N|\alpha|}\E\big[|\partial^\alpha Y_N(x)|^2\big]
 \lesssim_J(1+N)^{A_J}.
$$
Cauchy--Schwarz therefore bounds the covariances of these normalized
derivatives with $Y_N(x)$ by a polynomial in $N$.
The same holds for $2^{-N|\alpha|}\partial^\alpha V_N(x)$, since
derivatives of $V_N$ are sums of covariances of derivatives of $Y_N$.
The Faà di Bruno and Gaussian shift calculation in
\cref{lem:localmoments}, at the first-moment level, gives
$$
 \E\Big[
 \sum_{|\alpha|\leq J}2^{-N|\alpha|}
 \int_{\T^d}|\partial^\alpha\rho_N(x)|\,dx
 \Big]
 \lesssim_J(1+N)^{C_J}.
$$
The variance normalization cancels the exponential moment at this order. Integration by parts and Jensen's inequality for the fine part
therefore give
$$
 \E\big[\|\widehat M_\gamma\|_{B_{k,s,n}}\big]
 \lesssim_{k,q,J}2^{ns/2}(
 2^{-N(\zeta_d(q)-d)/q}
 +(1+N)^{C_J}2^{-J(n-N)}).
$$
Take $N=\lfloor(1-\eta)n\rfloor$ and $J\eta>s/2$. Both terms are
summable. Thus the annular norms are almost surely summable, and so are
their $2k$-th powers.
Taking the countable intersection over $k$ and applying
\cref{lem:weighted-fourier-criterion} gives $\dim_F(M_\gamma)\geq s$.
Finally, take $s$ along a countable sequence increasing to $D_{\gamma,d}$.
On the other hand, \textup{(ii)} gives the usual correlation dimension formula
$\dim_2(M_\gamma)=D_{\gamma,d}<d$, so \eqref{eq:reldim} gives the reverse
bound.
\end{proof}

\begin{rmk}[The model of Chen, Lin and Qiu]
After rescaling their torus to $(\R/2\pi\Z)^d$, the field constructed in
\cite[Proposition~3.1]{ChenLinQiu} satisfies the preceding proposition.
Their properties \textup{(P0)}--\textup{(P2)} give independence, smoothness,
finite dependence and the logarithmic covariance.  To check \textup{(iii)} for every fixed
multi-index, use their construction $\psi_j=P_j*\xi_j$, where $P_j$ is a
smooth mollifier at scale $\varepsilon_j=j^{-2}2^{-j}$ and
$\E[\xi_j(x)^2]=\log2$. Then
$$
 \sup_x\|\partial^\alpha\psi_j(x)\|_{L^2(\Omega)}
 \leq\sqrt{\log2}\,\|\partial^\alpha P_j\|_{L^1}
 \lesssim_\alpha j^{2|\alpha|}2^{j|\alpha|}.
$$
Squaring gives \textup{(iii)}, including orders beyond the range
$|\alpha|\leq d$ stated in their property \textup{(P4)}. Thus the
proposition recovers their Theorem~1.1 directly from the block construction.
\end{rmk}

\section{Euclidean restrictions and flat boundaries}
\label{sec:4}
Let $U\subset\R^d$ be open and fix $0<\gamma<\sqrt{2d}$.
We perform the finite-range reduction in a
fixed bounded open set $W\Subset U$ containing the supports of the
measures under consideration. For a set $A\Subset W$, write
$$
 M_{>\delta,A}=\1_A M_{>\delta},
 \qquad
 \widetilde M_{\leq\delta,A}=\1_A\widetilde M_{\leq\delta}.
$$
The following estimate concerns the finite-range field and holds for every
bounded deterministic multiplier, independently of its boundary behavior.

\begin{lem}[Fine fluctuation]\label{lem:domain-fine-part}
Let $f$ be a bounded function supported in $W$. For every
${1<\theta\leq2}$ with $\theta<2d/\gamma^2$, every $s\geq0$, every
integer $k\geq1$ and $0<\delta\leq1$,
$$
 \E\Big[
 \|\widehat{f\widetilde M_{\leq\delta}}\|_{B_{k,s,n}^\R}^{\theta}
 \Big]
 \lesssim_{\theta,k,s,W}
 \|f\|_\infty^\theta
 2^{ns\theta/2}\delta^{\zeta_d(\theta)-d}.
$$
In particular, if $s<D_{\gamma,d}$, there is $\theta\in(1,2]$ such that, for every sufficiently small $\eta>0$ and
$\delta_n=2^{-\lfloor(1-\eta)n\rfloor}$, the left-hand side is
$O_{\eta,f,k}(2^{-cn})$ for some $c>0$.
\end{lem}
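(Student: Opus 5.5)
The plan is to repeat the fine-part argument from the proof of \cref{thm:torus}, now with a bounded multiplier $f$ and Euclidean annular norms. Cover $W$ by the $O_W(\delta^{-d})$ cubes $Q$ of a grid of side comparable to $\delta$ that meet $W$. Then greedily color them into a bounded number of classes, so that distinct cubes in the same class are separated by more than $\delta$. Set
$$
 Z_Q(\xi)=\int_Q e^{-i\xi\cdot x}f(x)\,\widetilde M_{\leq\delta}(dx),\qquad \xi\in A_n^\R .
$$
Since $f$ is deterministic and $\widetilde M_{\leq\delta}=\rho_{>\delta}(M_{\leq\delta}-dx)$, conditionally on $\cF_{>\delta}$ the coarse density is fixed. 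The finite-range property then makes the $Z_Q$ in a given class conditionally independent, and each is centered because $\E[M_{\leq\delta}\mid\cF_{>\delta}]=dx$. Apply \cref{lem:conditional-type} with exponent $\theta$ in the weighted space $L^{2k}(A_n^\R,|\xi|^{ks-d}d\xi)$ to each class, and sum over the boundedly many classes using $\|\sum_c a_c\|^\theta\lesssim\sum_c\|a_c\|^\theta$. This gives
$$
 \E\big[\|\widehat{f\widetilde M_{\leq\delta}}\|_{B_{k,s,n}}^\theta\mid\cF_{>\delta}\big]\lesssim_{\theta,k}\sum_Q\E\big[\|Z_Q\|_{B_{k,s,n}}^\theta\mid\cF_{>\delta}\big].
$$

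Next, bound each $Z_Q$ pointwise by $|Z_Q(\xi)|\le\|f\|_\infty\big(M_\gamma(Q)+M_{>\delta}(Q)\big)$. Since $\int_{A_n^\R}|\xi|^{ks-d}d\xi\asymp 2^{nks}$, this yields
$$
 \|Z_Q\|_{B_{k,s,n}}\lesssim\|f\|_\infty 2^{ns/2}\big(M_\gamma(Q)+\E[M_\gamma(Q)\mid\cF_{>\delta}]\big).
$$
Take full expectations and use conditional Jensen, $\E[M_\gamma(Q)\mid\cF_{>\delta}]^\theta\le\E[M_\gamma(Q)^\theta\mid\cF_{>\delta}]$. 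Then \cref{prop:gmcmoments} with $q=\theta<2d/\gamma^2$ applies: the cubes have diameter $\lesssim\delta$ and lie in a fixed compact neighbourhood $E_0$ of $\overline W$ inside $U$, so $\E[M_\gamma(Q)^\theta]\lesssim\delta^{\zeta_d(\theta)}$. Summing over the $O(\delta^{-d})$ cubes gives the bound $\|f\|_\infty^\theta2^{ns\theta/2}\delta^{\zeta_d(\theta)-d}$.

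One detail needs care. \cref{prop:gmcmoments} is stated for diameter at most $r\le1$, while the cube side is only comparable to $\delta$. I will take the side equal to $\delta/\sqrt d$, so that the diameter is at most $\delta$. Cubes in one class that are two steps apart in the grid are then separated by $\delta/\sqrt d$, which is not more than $\delta$. So I separate same-class cubes by a fixed number $\lceil\sqrt d\rceil+1$ of grid steps; the number of classes stays bounded.

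For the final claim, use \eqref{eq:D-variational} with $s<D_{\gamma,d}$ to choose $\theta\in(1,2]$ with $\theta<2d/\gamma^2$ and $\zeta_d(\theta)-d>s\theta/2$. Then pick $\eta>0$ small enough that $(1-\eta)(\zeta_d(\theta)-d)>s\theta/2$. With $\delta_n=2^{-\lfloor(1-\eta)n\rfloor}$ we have $\delta_n^{\zeta_d(\theta)-d}\lesssim2^{-n(1-\eta)(\zeta_d(\theta)-d)}$, because $\zeta_d(\theta)-d>0$. Hence the bound is $O(2^{-cn})$ with $c=(1-\eta)(\zeta_d(\theta)-d)-s\theta/2>0$, and this holds for every smaller $\eta$ as well.

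I expect no serious obstacle. The only steps needing care are measurability and integrability of $Z_Q$ as an $L^{2k}$-valued variable, and the fact that the fine field is defined on all of $\R^d$, so that independence across separated cubes holds near $\partial W$. The membership $Z_Q\in L^\theta(\Omega,B)$ follows from the moment bound itself.
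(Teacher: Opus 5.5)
Your proposal is correct and essentially reproduces the paper's proof: the same partition into cubes of side comparable to $\delta$ with a bounded-degree colouring, \cref{lem:conditional-type} applied conditionally on $\cF_{>\delta}$, the pointwise bound by $M_\gamma(Q)+\E[M_\gamma(Q)\mid\cF_{>\delta}]$ followed by conditional Jensen and \cref{prop:gmcmoments}, and the choice of $\theta$ and then $\eta$ via \eqref{eq:D-variational}. Taking side exactly $\delta/\sqrt d$ and separating same-class cubes by $\lceil\sqrt d\rceil+1$ grid steps is a harmless tidying of the paper's phrase ``side comparable to $\delta$''.
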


\begin{proof}
Partition $W$ into intersections $Q\cap W$, where the cubes $Q$ have side comparable to $\delta$,
and set
$$
 Z_Q(\xi)=\int_{Q\cap W}e^{-i\xi\cdot x}f(x)
 \widetilde M_{\leq\delta}(dx).
$$
There are $O_W(\delta^{-d})$ cubes. The bounded-degree coloring used in
the torus proof partitions them into a fixed number of classes within
which the $Z_Q$ are conditionally independent and centered. Moreover,
$$
 \|Z_Q\|_{B_{k,s,n}^\R}
 \lesssim_{k,s} 2^{ns/2}\|f\|_\infty
 \big(M_\gamma(Q\cap W)+M_{>\delta}(Q\cap W)\big).
$$
\cref{lem:conditional-type} and conditional Jensen's inequality,
using $M_{>\delta}(Q\cap W)=\E[M_\gamma(Q\cap W)\mid\cF_{>\delta}]$,
give
$$
 \E\Big[
 \|\widehat{f\widetilde M_{\leq\delta}}\|_{B_{k,s,n}^\R}^{\theta}
 \Big]
 \lesssim_{k,s} 2^{ns\theta/2}\|f\|_\infty^\theta
 \sum_Q\E\big[M_\gamma(Q\cap W)^\theta\big].
$$
\cref{prop:gmcmoments} proves the first assertion. For the
second, use \eqref{eq:D-variational} to choose $\theta$ and then
$\eta_1>0$ such that
$$
 (1-\eta_1)\big(\zeta_d(\theta)-d\big)>s\theta/2
$$
to conclude.
\end{proof}

\begin{lem}[Smooth approximation]\label{lem:multiplier-approximation}
Let $M_\gamma$ be associated with a smooth log-correlated field on $U$,
and let $f\geq0$ be a bounded function compactly supported in $W$.
Suppose that, for some $\sigma>0$ and all sufficiently small $\delta$,
there are $a_\delta\in C_c^\infty(W)$ such that
$$
 \|f-a_\delta\|_1\lesssim\delta^\sigma,
 \qquad
 \|\partial^\alpha a_\delta\|_\infty
 \lesssim_\alpha\delta^{-|\alpha|}
$$
for every multi-index $\alpha$. Then, almost surely,
$$
 \dim_F(fM_\gamma)\geq\min\{2\sigma,D_{\gamma,d}\}.
$$
\end{lem}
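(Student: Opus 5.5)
We first reduce to the finite-range model of \cref{sec:finite-range-model}. By \cref{prop:smooth-transfer}, the given field and the model differ by multiplication by an almost surely smooth positive density. The identity $\widehat{g\mu}=(2\pi)^{-d}\widehat{\chi g}*\widehat\mu$ from that proof preserves pointwise polynomial decay of $fM_\gamma$. Since $f$ is bounded and supported in $W$, we can therefore work with the model. Fix $0<s<\min\{2\sigma,D_{\gamma,d}\}$. Choose $\theta$ and $\eta$ as in \cref{lem:domain-fine-part}, shrinking $\eta$ further so that $(1-\eta)2\sigma>s$. Put $\delta_n=2^{-\lfloor(1-\eta)n\rfloor}$, fix $k\geq1$, and split
$$
 fM_\gamma
 =
 f\widetilde M_{\leq\delta_n}
 +(f-a_{\delta_n})M_{>\delta_n}
 +a_{\delta_n}M_{>\delta_n}.
$$
We will show that each of the three annular norms $\|\cdot\|_{B^\R_{k,s,n}}$ has summable first moments. \cref{lem:weighted-fourier-criterion}, applied over all $k$ and a sequence of $s$ increasing to $\min\{2\sigma,D_{\gamma,d}\}$, then gives the claim.

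The fine term is handled directly by \cref{lem:domain-fine-part} together with Jensen's inequality, giving $O(2^{-cn})$.

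For the smooth coarse term, take $\xi\in A_n^\R$ and choose a coordinate $j$ with $|\xi_j|\geq|\xi|/\sqrt d$. Integrating by parts $J$ times in $x_j$, with no boundary terms because $a_\delta\in C_c^\infty(W)$, and applying Leibniz's rule, we get
$$
 |\widehat{a_\delta M_{>\delta}}(\xi)|
 \lesssim_J
 |\xi|^{-J}\sum_{|\beta|\leq J}\delta^{-(J-|\beta|)}\int_W|\partial^\beta\rho_{>\delta}|
 \lesssim
 (\delta|\xi|)^{-J}N^W_{\delta,J}.
$$
Since $\int_{A_n^\R}|\xi|^{ks-d}\,d\xi\asymp2^{nks}$, \cref{lem:localmoments} at $u=1$ bounds the expected annular norm by $2^{ns/2}2^{-J\eta n}$. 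This is summable once $J\eta>s/2$.

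The step needing most care is the error term $(f-a_\delta)M_{>\delta}$. We bound its Fourier transform pointwise by $\int_W|f-a_\delta|\rho_{>\delta}\,dx$, so its annular norm is at most $2^{ns/2}$ times this integral. Because $\E[\rho_{>\delta}(x)]=1$ and $f-a_\delta$ is deterministic, Tonelli gives
$$
 \E\Big[\int|f-a_\delta|\rho_{>\delta}\,dx\Big]=\|f-a_\delta\|_1\lesssim\delta^\sigma.
$$
The expected norm is therefore $\lesssim2^{ns/2}2^{-(1-\eta)\sigma n}$, which is summable by the choice $(1-\eta)2\sigma>s$. This is exactly where the threshold $2\sigma$ enters. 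The coarse density must be kept as a multiplier here, not the full chaos, since $\E[M_\gamma(dx)\mid\cF_{>\delta}]=\rho_{>\delta}\,dx$ is what makes the first-moment computation exact with no multifractal loss.

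Summing the three estimates, the annular norms of $\widehat{fM_\gamma}$ are almost surely in $\ell^1$, hence in $\ell^{2k}$. Intersecting over countably many $k$ and $s$ concludes the proof.
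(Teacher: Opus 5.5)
Your proposal is correct and follows the paper's proof essentially verbatim. It uses the same splitting $fM_\gamma=a_\delta\rho_{>\delta}\,dx+(f-a_\delta)\rho_{>\delta}\,dx+f\widetilde M_{\leq\delta}$, the same $L^1$ bound on the error term (which produces the threshold $2\sigma$), $J$ integrations by parts controlled by $N^W_{\delta,J}$, \cref{lem:domain-fine-part} for the fine term, and the same parameter choices $(1-\eta)\min\{\sigma,(\zeta_d(\theta)-d)/\theta\}>s/2$ and $J\eta>s/2$.

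One minor quibble concerns your closing remark about keeping $\rho_{>\delta}$ rather than the full chaos. That is not really the reason it works, since $\E[M_\gamma(dx)]=dx$ also gives $\E\int|f-a_\delta|\,dM_\gamma=\|f-a_\delta\|_1$ exactly. Using $\rho_{>\delta}$ is simply what the coarse--fine splitting already produces.
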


\begin{proof}
The proof of \cref{prop:smooth-transfer} applies with the
factor $f$ inserted, so it is enough to consider the finite-range model.
Write
$$
 fM_\gamma
 =a_\delta\rho_{>\delta}\,dx
 +(f-a_\delta)\rho_{>\delta}\,dx
 +f\widetilde M_{\leq\delta}.
$$
We bound the Fourier transform of the middle term uniformly by its $L^1$ norm. The expectation of this norm is
$\|f-a_\delta\|_1\lesssim\delta^\sigma$. For the first term, Leibniz's
rule and $J$ integrations by parts give
$$
 |\widehat{a_\delta\rho_{>\delta}}(\xi)|
 \lesssim_J (\delta|\xi|)^{-J}N_{\delta,J}^W.
$$
Since $\E[N_{\delta,J}^W]\lesssim_J1$ by
\cref{lem:localmoments}, \cref{lem:domain-fine-part} and
Jensen's inequality yield
$$
 \E\big[\|\widehat{fM_\gamma}\|_{B_{k,s,n}^\R}\big]
 \lesssim_{\theta,f,k,s,J,W}2^{ns/2}\big(
 \delta^\sigma+(2^n\delta)^{-J}
 +\delta^{(\zeta_d(\theta)-d)/\theta}\big).
$$
Fix $0<s<\min\{2\sigma,D_{\gamma,d}\}$, choose $\theta$ and a
sufficiently small $\eta>0$ such that
$$
 (1-\eta)\min\Big\{\sigma,
 \frac{\zeta_d(\theta)-d}{\theta}\Big\}>s/2,
$$
and take $\delta=\delta_n=2^{-\lfloor(1-\eta)n\rfloor}$ and
$J\eta>s/2$. The displayed expectations are summable. Thus the annular
norms, and hence their $2k$-th powers, are almost surely summable.
As before, take the countable intersection over $k$, apply
\cref{lem:weighted-fourier-criterion}, and let $s$ increase to
$\min\{2\sigma,D_{\gamma,d}\}$ along a countable sequence.
\end{proof}

\subsection{Proof of \texorpdfstring{\cref{thm:finite-perimeter}}{Theorem~\getrefnumber{thm:finite-perimeter}}} \label{sec:prooffiniteper}

Let $\varphi$ be a nonnegative smooth mollifier supported in the unit
ball and set $a_\delta=\1_A*\varphi_\delta$. The finite perimeter
assumption means that the distributional gradient $D\1_A$ is a finite
measure of total variation $\operatorname{Per}(A)$. The standard
translation bound
$$
 \|\1_A(\,\cdot-h)-\1_A\|_1
 \leq |h|\operatorname{Per}(A)
$$
follows first for smooth functions by integrating their gradient along
line segments, and then for $\1_A$ by mollification. Consequently,
\begin{equation}\label{eq:domain-bv-approximation}
 \|\1_A-a_\delta\|_1
 \leq\operatorname{Per}(A)\int|h|\varphi_\delta(h)\,dh
 \lesssim_A\delta.
\end{equation}
The mollifier bounds also give
$\|\partial^\alpha a_\delta\|_\infty\lesssim_\alpha\delta^{-|\alpha|}$,
and $a_\delta\in C_c^\infty(W)$ for small $\delta$. Applying
\cref{lem:multiplier-approximation} with $f=\1_A$ and $\sigma=1$ finishes the proof.

\begin{rmk}[A fractional extension]
\label{rmk:fractional-boundary}
The proof uses finite perimeter only through
\eqref{eq:domain-bv-approximation}. If, more generally,
$$
 \|\1_A-\1_A*\varphi_\delta\|_1\lesssim_A\delta^\sigma
$$
for some $0<\sigma\leq1$, \cref{lem:multiplier-approximation}
gives $\dim_F(M_{\gamma,A})\geq\min\{2\sigma,D_{\gamma,d}\}$ almost
surely.
\end{rmk}

We can also use \cref{lem:multiplier-approximation} to give an easy proof of \cref{cor:smooth-localization}.

\begin{proof}[Proof of \cref{cor:smooth-localization}]
Apply \cref{lem:multiplier-approximation} with
$f=a_\delta=\chi$ and arbitrary $\sigma>0$ to obtain the lower bound
$D_{\gamma,d}$. For the reverse bound, choose a ball $B\Subset U$ on
which $\chi\geq c>0$. Since $\chi M_\gamma\geq cM_{\gamma,B}$ and
$\dim_2(M_{\gamma,B})=D_{\gamma,d}<d$, comparison of their Riesz
energies gives $\dim_S(\chi M_\gamma)\leq D_{\gamma,d}$. Now use the bound $\dim_F \leq \dim_S$ to conclude.
\end{proof}

\subsection{Sharpness at flat boundary patches}

We first record a deterministic bound which will also be used in \cref{sec:5}.

\begin{lem}[A one-sided mass bound]\label{lem:one-sided-mass}
Let $\mu$ be a finite positive measure on $\R^d$ such that
$|\widehat\mu(\xi)|\lesssim(1+|\xi|)^{-s/2}$ for some $s>0$.
Whenever $\supp\mu\subset\{x:\omega\cdot x\leq a\}$, with
$\omega\in\mathbb S^{d-1}$, we have
$$
 \mu\big(\{x:a-h\leq\omega\cdot x\leq a\}\big)
 \lesssim h^{s/2},\qquad 0<h\leq1,
$$
uniformly in $a$ and $\omega$.
\end{lem}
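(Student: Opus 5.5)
We test $\mu$ against a smooth one-dimensional bump in the normal direction and use Fourier inversion. Fix $\omega$ and $a$, and let $\nu=\pi_\#\mu$ be the pushforward of $\mu$ under $\pi(x)=\omega\cdot x$. Then $\nu$ is a finite measure on $\R$ with
$$
 \widehat\nu(t)=\widehat\mu(t\omega),
 \qquad
 |\widehat\nu(t)|\lesssim(1+|t|)^{-s/2},
$$
where the constant is uniform in $\omega$. The support condition reads $\supp\nu\subset(-\infty,a]$, and the mass to bound is $\nu([a-h,a])$. Translating by $a$ only multiplies $\widehat\nu$ by a unimodular factor, so we may take $a=0$.

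Fix $0\leq\psi\in C_c^\infty(\R)$ with $\psi\geq1$ on $[-1,0]$, supported in $[-2,1]$, and put $\psi_h(u)=\psi(u/h)$. Positivity gives
$$
 \nu([-h,0])\leq\int\psi_h\,d\nu
 =\frac1{2\pi}\int_\R\widehat{\psi_h}(t)\,\overline{\widehat\nu(t)}\,dt,
$$
by Parseval/Fourier inversion; this is legitimate because $\widehat{\psi_h}$ is Schwartz and $\nu$ is finite. Since $\widehat{\psi_h}(t)=h\widehat\psi(ht)$, we obtain
$$
 \nu([-h,0])\lesssim\int_\R h|\widehat\psi(ht)|(1+|t|)^{-s/2}\,dt
 =\int_\R|\widehat\psi(u)|\big(1+|u|/h\big)^{-s/2}\,du .
$$

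If $s<2$, bound $(1+|u|/h)^{-s/2}\leq h^{s/2}|u|^{-s/2}$. The function $|u|^{-s/2}$ is integrable near $0$ and $\widehat\psi$ decays rapidly, so the integral is $\lesssim h^{s/2}$.

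If $s\geq2$, this direct estimate yields only $h\log(1/h)$ or $h$, because near $u=0$ the integrand is of size $1$ over a region of length about $h$. This is the step where the support hypothesis has to be used. The plan is to choose $\psi$ so that $\widehat\psi$ vanishes to high order at $0$, while keeping $\psi\geq\1_{[-1,0]}$ on $(-\infty,0]$, where $\nu$ lives. Vanishing moments force $\psi$ to take negative values, but those values can be placed on $(0,\infty)$, where $\nu$ has no mass.

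Concretely, take $\psi=\psi_0+\phi$. Here $\psi_0\geq0$ is a bump with $\psi_0\geq1$ on $[-1,0]$ and $\supp\psi_0\subset[-2,1/2]$. The correction $\phi\in C_c^\infty((1,3))$ is chosen so that the moments of $\psi$ of orders $0,\dots,L$ all vanish. This is possible since the moment map on $C_c^\infty((1,3))$ is onto $\R^{L+1}$. Then $\int\psi_h\,d\nu=\int\psi_0(\cdot/h)\,d\nu\geq\nu([-h,0])$ still holds, and now $|\widehat\psi(u)|\lesssim\min\{|u|^{L+1},|u|^{-N}\}$.

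Hence
$$
 \int|\widehat\psi(u)|\big(1+|u|/h\big)^{-s/2}\,du
 \leq h^{s/2}\int|\widehat\psi(u)||u|^{-s/2}\,du
 \lesssim h^{s/2}
$$
once $L+1>s/2-1$. The constants depend only on $s$ and on the implied constant in the Fourier decay hypothesis, so the bound is uniform in $a$ and $\omega$. The only delicate point is therefore the vanishing-moment construction in the case $s\geq2$; everything else is routine.
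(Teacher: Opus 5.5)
Your proposal is correct and follows the paper's argument: you project to $\nu$, test against a rescaled bump whose Fourier transform vanishes at the origin to order greater than $s/2-1$, and place the negative parts of the test function on $(0,\infty)$, where $\nu$ carries no mass. The only cosmetic difference is how the test function is built. The paper writes it explicitly as the finite difference $\sum_{j=0}^L(-1)^j\binom Lj\chi(u-3j)$, so that $\widehat\varphi(z)=(1-e^{-3iz})^L\widehat\chi(z)$. You instead obtain the correction from the surjectivity of the moment map on $C_c^\infty((1,3))$.
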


\begin{proof}
Let $\nu=(x\mapsto\omega\cdot x)_*\mu$. Then $\nu$ is supported in
$(-\infty,a]$ and
$|\widehat\nu(t)|\lesssim(1+|t|)^{-s/2}$. We choose a smooth test
function which is at least one near this endpoint. We may modify it
beyond the endpoint without changing its integral against $\nu$, and
use this freedom to make its Fourier transform vanish to high order at
zero.

Choose an integer $L\geq0$ with $L>s/2-1$ and a nonnegative function
$\chi\in C_c^\infty((-2,1))$ equal to one on $[-1,0]$. Set
$$
 \varphi(u)=\sum_{j=0}^L(-1)^j\binom Lj\chi(u-3j).
$$
Every translate with $j\geq1$ is supported in $(0,\infty)$, so
$\varphi=\chi$ on $(-\infty,0]$. Moreover,
$$
 \widehat\varphi(z)=(1-e^{-3iz})^L\widehat\chi(z),
$$
which is $O(|z|^L)$ at zero and rapidly decreasing at infinity.
Positivity and Fourier inversion now give
$$
\begin{aligned}
 \nu([a-h,a])
 &\leq\int_\R\varphi\big((u-a)/h\big)\,\nu(du)\\
 &\lesssim h\int_\R|\widehat\varphi(ht)|(1+|t|)^{-s/2}\,dt\\
 &\leq h^{s/2}\int_\R|\widehat\varphi(z)||z|^{-s/2}\,dz
 \lesssim h^{s/2}.
\end{aligned}
$$
The last integral is finite because $L-s/2>-1$. All constants are
independent of $a$ and $\omega$.
\end{proof}

The exponent $2$ in \cref{thm:finite-perimeter} is sharp whenever
the boundary contains a genuinely planar piece. Recall that $\partial D$
contains a \emph{flat open patch} if there are an affine hyperplane $H$ and
a nonempty relatively open set $\Gamma\subset\partial D$ such that
$\Gamma\subset H$.

\begin{proof}[Proof of \cref{thm:flat-boundary}]
By \cref{prop:smooth-transfer}, we may work with the finite-range
model. The lower bound follows from \cref{thm:finite-perimeter},
and \eqref{eq:dimbound} gives the upper bound $D_{\gamma,d}$. It remains
to prove the upper bound by $2$ when $D_{\gamma,d}>2$. This implies
$\gamma^2<2p$, where $p=d-1$.

Choose orthonormal coordinates $(t,y)\in\R\times\R^p$ near the flat
patch, with the first axis pointing into $D$. There are $a>0$ and a
bounded open set $V\subset\R^p$ such that
$$
 D\cap\big((-a,a)\times V\big)=(0,a)\times V.
$$
Choose a closed nondegenerate rectangle $F\Subset V$, $0<b<a$, and
$0\leq\Phi\in C_c^\infty((-a,a)\times V)$ equal to one on
$[-b,b]\times F$. Set $\mu=\Phi M_{\gamma,D}$ and
$$
 T_h=\frac1hM_\gamma\big((0,h)\times F\big),
 \qquad 0<h<b.
$$
Observe that the definition of the chaos implies $\E[T_h]=|F|$. 
We claim that, for $1<q<2p/\gamma^2$,
$$
 \sup_{0<h<b}\E[T_h^q]<\infty.
$$
To see this, make the change of variables $t=hu$ in the regularized integral.
The logarithmic cutoff estimate and
$|(hu-hv,y-z)|\geq|y-z|$ give
$$
 K_\varepsilon\big((hu,y),(hv,z)\big)
 \leq K_\varepsilon\big((0,y),(0,z)\big)+C
$$
uniformly in $0<h<b$ and $0<\varepsilon<1$. Compare with the field on
$\{0\}\times V$, extended constantly in $u$, plus an independent
Gaussian constant of variance $C$. Kahane's inequality and
\cref{prop:gmcmoments} give the uniform $q$-moment bound
for the regularizations. Fatou's lemma gives the same bound for $T_h$.

The event $\{\dim_F(M_{\gamma,D})>2\}$ has probability zero or one. Indeed,
for every integer $m\geq1$,
$$
 M_{\gamma,D}
 =\rho_{>2^{-m}}(\1_D M_{\leq2^{-m}}).
$$
The measure in brackets is measurable with respect to the
completed $\sigma$-field generated by
$X_{2^{-(j+1)},2^{-j}}$, $j\geq m$. The multiplier and its reciprocal
are smooth near $\overline D$, so the two measures have the same
Fourier dimension. The
event is therefore a tail event, and Kolmogorov's zero--one law applies.

Suppose that this event has probability one. Multiplication by $\Phi$
preserves the asserted Fourier decay. Since $\mu$ is supported in
$\{t\geq0\}$, \cref{lem:one-sided-mass} then gives, almost surely for some $s>2$,
$$
 0\leq T_h
 \leq\frac{\mu(\{(t,y):0\leq t\leq h\})}{h}
 \lesssim h^{s/2-1}\longrightarrow0
$$
as $h\to 0$.
The uniform $L^q$ bound makes this family uniformly integrable, so
$\E[T_h]\to0$, contradicting $\E[T_h]=|F|>0$. This proves the
upper bound by $2$.
\end{proof}

For a flat patch, geometry enters only through this obstruction. For
everywhere positive curvature, it also improves the lower bound.  We
therefore keep the full two-sided argument together in the next section.

\section{Uniformly strictly convex domains} \label{sec:5}

Let $D$ be a bounded convex domain with $C^2$ boundary $S=\partial D$, and
let $n$ be the outward unit normal. For tangent vectors $v,w\in T_xS$,
we use the second fundamental form
$$
 \mathrm{II}_x(v,w)=\langle D_vn,w\rangle,
$$
where $D_vn$ is the derivative of the normal along the surface in the
direction $v$. We call $D$
\emph{uniformly strictly convex} if $\mathrm{II}_x$ is positive definite
for every $x\in S$.  By compactness, its principal curvatures are then
bounded above and below by positive constants.  The geometric facts below
require only $C^2$ regularity, while the lower Fourier bound uses the
$C^\infty$ hypothesis in the theorem.

By \cref{prop:smooth-transfer}, it is enough to prove both
\cref{thm:curved-domain,thm:curved-surface} for the
finite-range field.  We work with that field throughout this section
and transfer the result back to every smooth covariance perturbation at the
end. If $D_{\gamma,d}\leq2$, \cref{thm:finite-perimeter} and the
correlation dimension upper bound already give \cref{thm:curved-domain}.
We therefore
assume $D_{\gamma,d}>2$ when proving \cref{thm:curved-domain}.  The formula for
$D_{\gamma,d}$ then gives $\gamma^2<2(d-1)$, so the surface chaos used
below is subcritical.
The upper bound comes from unusually heavy supporting caps.  For the lower
bound, integration by parts reduces the problem to oscillatory integrals
over $S$, where curvature leaves only two stationary points in each
direction.  The normal integration contributes the exponent $2$, while the
largest possible surface mass near those points contributes
$\alpha_{\gamma,d-1}$.

We collect here the geometric facts used in the proof. For $x\in S$, convexity gives the supporting half-space
$$
 \overline D\subset\{y:n(x)\cdot(y-x)\leq0\}.
$$
The \emph{supporting cap of normal thickness $h$ at $x$} is then
$$
 C_h(x)=\{y\in\overline D:0\leq n(x)\cdot(x-y)\leq h\}.
$$
Thus $C_h(x)$ is the portion of $\overline D$ between its supporting
hyperplane at $x$ and the parallel hyperplane at distance $h$ inside $D$.

\begin{lem}[Uniform strict convexity]
\label{lem:strict-convex-geometry}
Assume that $D$ is bounded and convex, and that its $C^2$ boundary is
uniformly strictly convex.  Then the following statements hold with
constants uniform in the base point and direction.
\begin{enumerate}[label=\textup{(\roman*)}]
 \item The Gauss map $n:S\to\mathbb S^{d-1}$ is a $C^1$
 diffeomorphism.
 \item In orthonormal coordinates centered at $x\in S$, with
 $n(x)=e_d$ and $D$ below its supporting hyperplane, there is a uniform
 neighbourhood $U_x$ on which
 $$
  S\cap U_x=\{(u,-g_x(u)):|u|<r_0\},
  \qquad
  c|u|^2\leq g_x(u)\leq C|u|^2.
 $$
 \item For all sufficiently small $h>0$, the supporting cap of normal
 thickness $h$ at $x$ has diameter $O(h^{1/2})$ and volume comparable to
 $h^{(d+1)/2}$.
 \item For $\omega\in\mathbb S^{d-1}$, the height function
 $\phi_\omega(x)=\omega\cdot x$ on $S$ has exactly two critical points
 $x_+(\omega)$ and $x_-(\omega)$, both uniformly nondegenerate, and
 $$
  |\nabla_S\phi_\omega(x)|
  \asymp
  \dist_S\big(x,\{x_+(\omega),x_-(\omega)\}\big).
 $$
\end{enumerate}
\end{lem}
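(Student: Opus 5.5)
The plan is to derive everything from the quadratic local graph representation (ii), so I would prove (ii) first and then read off (i), (iii) and (iv). Fix $x\in S$ and rotate so that $n(x)=e_d$ and $D$ lies below the tangent hyperplane. Since $S$ is a compact $C^2$ hypersurface, the implicit function theorem writes $S$ near $x$ as a graph $x_d=-g_x(u)$ with $g_x(0)=0$ and $\nabla g_x(0)=0$. The Hessian $D^2g_x(0)$ is the second fundamental form at $x$, which is positive definite with eigenvalues in $[\kappa_-,\kappa_+]$, where $0<\kappa_-\le\kappa_+<\infty$ by compactness. Uniform continuity of second derivatives on the compact surface gives a radius $r_0$, independent of $x$, on which the graph exists and $\tfrac12\kappa_-\le D^2g_x\le 2\kappa_+$. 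Taylor's theorem then yields $c|u|^2\le g_x(u)\le C|u|^2$ for $|u|<r_0$. Convexity of $D$, together with its position below the supporting hyperplane, guarantees that $D\cap U_x$ is exactly the region under the graph.

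For (i), the differential of the Gauss map is the shape operator, which is invertible everywhere. Hence $n$ is a $C^1$ local diffeomorphism from the compact connected surface $S$ onto $\mathbb S^{d-1}$, and so a covering map. Since $\mathbb S^{d-1}$ is simply connected for $d\ge3$, the covering is a diffeomorphism. For $d=2$ I would instead use the strict convexity of $D$: two distinct boundary points with the same outward normal would give two parallel supporting lines on the same side. By convexity this forces the segment between them into $S$, contradicting positive curvature. Injectivity plus the covering property then gives a bijection in every dimension.

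For (iii), fix $y\in C_h(x)$ and split into two cases. If $y$ lies in the chart, then $g_x(u)\le h$, so $|u|\le (h/c)^{1/2}$. The cap therefore sits in the cylinder $\{|u|\lesssim h^{1/2},\ -h\le x_d\le0\}$, which has diameter $O(h^{1/2})$ and volume $\lesssim h\cdot h^{(d-1)/2}$. Conversely, the set $\{|u|\le (h/C)^{1/2},\ -g_x(u)\ge x_d\ge -h\}$ lies in the cap and has volume $\gtrsim h^{(d+1)/2}$. The only delicate point is excluding the case where $y$ lies outside $U_x$. Because the only boundary point with normal $e_d$ is $x$, a compactness argument gives a uniform $\epsilon>0$ with $x_d\le-\epsilon$ on $\overline D\setminus U_x$. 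This is the step I expect to need the most care, since the bound must be uniform in $x$; I would obtain it from (i) and continuity of $n^{-1}$. For $h<\epsilon$ the cap then lies entirely inside the chart.

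For (iv), $\nabla_S\phi_\omega(x)$ is the tangential projection of $\omega$, which vanishes exactly when $n(x)=\pm\omega$. By (i) this gives exactly two critical points, $x_\pm(\omega)=n^{-1}(\pm\omega)$. At these points the Hessian of $\phi_\omega$ is $\pm\mathrm{II}$, which is uniformly nondegenerate. For the comparison, observe that $|\nabla_S\phi_\omega(x)|=|\omega-(\omega\cdot n(x))n(x)|$, and this is comparable to $\min_\pm|n(x)\mp\omega|$. The bi-Lipschitz property of $n$, namely uniform bounds on $dn$ and on $dn^{-1}$ coming from the curvature bounds, turns this into $\dist_S(x,\{x_+,x_-\})$. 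Near the two critical points the comparison is immediate. Away from them, both sides are bounded above and below by compactness, using the uniform separation of $x_+(\omega)$ and $x_-(\omega)$.
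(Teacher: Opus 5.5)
Your proposal is correct and follows essentially the same route as the paper: quadratic graph bounds from Taylor's formula with uniform Hessian control, the cap confined to the chart by compactness plus the fact that the supporting hyperplane meets $\overline D$ only at $x$, and critical points $x_\pm(\omega)=n^{-1}(\pm\omega)$ with the bi-Lipschitz Gauss map giving the distance comparison. The one variation is in (i): the paper uses your convexity injectivity argument in every dimension, with surjectivity from maximizing height functions on $\overline D$, so the covering-space argument for $d\geq3$ is unnecessary. Also, the Hessian at $x_\pm(\omega)$ is $\mp\mathrm{II}$ rather than $\pm\mathrm{II}$, which does not affect nondegeneracy.
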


\begin{proof}
Since by definition
$
 \langle Dn_xv,w\rangle=\mathrm{II}_x(v,w)
$,
the curvature bounds make $Dn_x$ uniformly invertible.
The inverse function theorem therefore makes $n$ a local
diffeomorphism with uniformly controlled inverse. Every direction is attained by maximizing its height function on $\overline{D}$, so $n$ is surjective. If two distinct points had the same outward normal, they would lie in the same supporting hyperplane, and convexity would place their joining segment in $S$, contradicting positive definiteness of the second fundamental form. Thus $n$ is bijective, and its local inverses give a global $C^1$ inverse.

In the coordinates of \textup{(ii)}, we have $g_x(0)=0$ and
$Dg_x(0)=0$. The curvature bounds and continuity of the Hessian give
uniform positive lower and upper bounds for $D^2g_x$ on smaller charts.
Taylor's formula therefore gives
$$
 g_x(u)=\int_0^1(1-v)u^{\mathsf T}D^2g_x(vu)u\,dv,
 \qquad c|u|^2\leq g_x(u)\leq C|u|^2.
$$
Compactness allows the chart radius and constants to be chosen uniformly
in $x$, proving \textup{(ii)}.

For all sufficiently small $h$, the entire cap $C_h(x)$ lies in this
chart, uniformly in $x$. Indeed, the supporting plane meets
$\overline D$ only at $x$, and compactness gives a positive lower bound
for the normal depth outside any fixed neighbourhood of $x$.
The normal thickness of the cap above $u$ is $(h-g_x(u))_+$, so
$$
 |C_h(x)|=\int_{|u|<r_0}(h-g_x(u))_+\,du.
$$
The integrand vanishes unless $|u|\leq\sqrt{h/c}$ and is at most $h$.
On $|u|\leq\sqrt{h/(2C)}$, it is at least $h/2$. Hence
$|C_h(x)|\asymp h^{(d+1)/2}$. Moreover,
$$
 C_h(x)\subset
 \{(u,v):|u|\leq\sqrt{h/c},\ -h\leq v\leq0\},
$$
so its diameter is $O(\sqrt h)$, proving \textup{(iii)}.

Finally, $\nabla_S\phi_\omega=\omega-(\omega\cdot n)n$, so its zeros
are the two points for which $n=\pm\omega$. At these points,
$$
 \operatorname{Hess}_S\phi_\omega
 =-(\omega\cdot n)\mathrm{II}=\mp\mathrm{II},
$$
which proves uniform nondegeneracy. If $\vartheta$ is the angle between
$n(x)$ and $\omega$, then $|\nabla_S\phi_\omega(x)|=|\sin\vartheta|$.
The bi-Lipschitz property of the Gauss map and
$|\sin\vartheta|\asymp\min\{\vartheta,\pi-\vartheta\}$ give the
distance comparison in \textup{(iv)}.
\end{proof}

\subsection{The upper bound}\label{sec:curved-upper}

The correlation dimension formula gives
$\dim_F(M_{\gamma,D})\leq D_{\gamma,d}$.
We prove the remaining bound by $2+\alpha_{\gamma,d-1}$, for which
$C^2$ boundary regularity is enough.

For $x\in\partial D$, let
$C_N(x)=C_{N^{-2}}(x)
$ denote the supporting cap of thickness $N^{-2}$ at $x$.
\cref{lem:strict-convex-geometry}(iii) gives, uniformly in $x$,
\begin{equation}
\label{eq:curved-upper-cap-geometry}
 \operatorname{diam}C_N(x)\lesssim N^{-1},
 \qquad |C_N(x)|\asymp N^{-(d+1)}.
\end{equation}
Moreover, \cref{lem:one-sided-mass}, applied to the
supporting half-space at $x$, shows that Fourier decay with exponent $s$
would imply
$$
 \sup_{x\in\partial D}M_{\gamma,D}(C_N(x))\lesssim N^{-s}.
$$
It therefore suffices to produce heavy caps. We use
the work \cite{DingRoyZeitouni} on the maximum of log-correlated Gaussian
fields. Recall the
multiplicative splitting from \cref{sec:finite-range-model},
\begin{equation}
\label{eq:curved-upper-multiplicative-splitting}
 M_{\gamma,D}=\rho_{>1/N}M_{\leq1/N}\quad\hbox{on }D.
\end{equation}

\begin{lem}
\label{lem:curved-upper-boundary-maximum}
There are finite deterministic sets $\Gamma_N\subset\partial D$ and
$\cF_{>1/N}$-measurable points ${x_N\in\Gamma_N}$ such that, for every
$b<\sqrt{2(d-1)}$,
$$
 \PP\big(\inf_{y\in C_N(x_N)}X_{>1/N}(y)\geq b\log N\big)
 \longrightarrow1.
$$
\end{lem}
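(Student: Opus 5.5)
The plan is to reduce to the known first-order asymptotics for the maximum of a log-correlated field on a $(d-1)$-dimensional set, and then to pass from a single point to a whole cap using the smoothness of the coarse field at scale $1/N$.

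\emph{Step 1: the net $\Gamma_N$.} Let $\Gamma_N$ be a deterministic maximal $N^{-1}$-separated subset of the $(d-1)$-dimensional surface $\partial D$. Then $|\Gamma_N|\asymp N^{d-1}$. Restricted to $\Gamma_N$, the field $X_{>1/N}$ is centered Gaussian with variance exactly $\log N$. Its covariance satisfies
$$
K_{>1/N}(x,y)=\log\frac{1}{|x-y|\vee N^{-1}}+O(1)
$$
uniformly. Via the surface charts of \cref{prop:gmcmoments}, distances along $\partial D$ are comparable to Euclidean distances in $\R^{d-1}$. So this is a log-correlated field on a $(d-1)$-dimensional set at resolution $N^{-1}$.

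\emph{Step 2: a high point on the net.} The first-order law of the maximum gives
$$
\max_{x\in\Gamma_N}X_{>1/N}(x)\geq(\sqrt{2(d-1)}-\epsilon)\log N
$$
with probability tending to one. This is the result of \cite{DingRoyZeitouni}, or an elementary second-moment argument on a sparser sub-net. Define $x_N$ as the (measurably chosen, e.g.\ lexicographically first) maximizer over $\Gamma_N$. It is $\cF_{>1/N}$-measurable.

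If one only wants the first-order statement, I would give the direct second-moment proof. Fix $b'<\sqrt{2(d-1)}$ and count the points $x\in\Gamma_N$ with $X_{>1/N}(x)\geq b'\log N$. The expected count is $N^{(d-1)-b'^2/2+o(1)}\to\infty$. The second moment is controlled using the covariance $\log(1/|x-y|)$ at separation $N^{-1}\leq|x-y|\leq1$, as in the standard thick-point computation. This requires the usual truncation (a first-moment barrier at intermediate scales) to get concentration for $b'$ up to $\sqrt{2(d-1)}$. I expect this to be the main obstacle if done by hand, so I would invoke \cite{DingRoyZeitouni} with the covariance estimate $K_{>1/N}=\log_+(1/(\dist\vee N^{-1}))+O(1)$ as its hypothesis.

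\emph{Step 3: uniform control across the cap.} By \eqref{eq:curved-upper-cap-geometry}, each cap $C_N(x)$ with $x\in\Gamma_N$ lies in the ball $B(x,CN^{-1})$. It therefore suffices to bound
$$
\max_{x\in\Gamma_N}\sup_{y\in B(x,CN^{-1})}|X_{>1/N}(y)-X_{>1/N}(x)|.
$$
By \eqref{eq:dercovest}, each increment process $G_x(y)=X_{>1/N}(y)-X_{>1/N}(x)$ on $B(x,CN^{-1})$ has variance $O(1)$ and canonical metric $d_G(y,y')\lesssim N|y-y'|$. Rescaling the ball to unit size, \cref{lem:dudley} gives $\E\sup|G_x|\lesssim1$. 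The Borell--TIS inequality then gives Gaussian tails around this mean, with variance $O(1)$. A union bound over the $O(N^{d-1})$ points of $\Gamma_N$ shows that the maximum is $O(\sqrt{\log N})=o(\log N)$ with probability tending to one.

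\emph{Conclusion.} Given $b<\sqrt{2(d-1)}$, choose $b<b'<\sqrt{2(d-1)}$. Combining Steps 2 and 3, with probability tending to one,
$$
\inf_{y\in C_N(x_N)}X_{>1/N}(y)\geq b'\log N-O(\sqrt{\log N})\geq b\log N.
$$
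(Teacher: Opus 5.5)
Your proposal is correct and is essentially the paper's argument: the first-order maximum from \cite{DingRoyZeitouni} on a boundary net of mesh $N^{-1}$, with $x_N$ the first maximizer, followed by Dudley-based smoothness of $X_{>1/N}$ at scale $N^{-1}$ to pass from $x_N$ to the whole cap. The differences are minor. The paper takes $\Gamma_N=\psi(N^{-1}\{0,\ldots,N-1\}^{d-1})$ for a bi-Lipschitz boundary chart $\psi$, so that the lattice-indexed assumptions (A.0)--(A.1) of \cite{DingRoyZeitouni} apply verbatim; your maximal separated net needs either this choice or a comparison with such a lattice via your Step~3. The paper also bounds the oscillation by the single global estimate $\E\|\nabla X_{>1/N}\|_{L^\infty(\overline D)}\lesssim N\sqrt{\log N}$ together with Markov's inequality, instead of local Dudley bounds, Borell--TIS and a union bound.
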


\begin{proof}
Choose a bi-Lipschitz boundary chart $\psi$ defined near
$[0,1]^{d-1}$, and set
$$
 V_N=\{0,\ldots,N-1\}^{d-1},\qquad
 \Gamma_N=\{\psi(v/N):v\in V_N\}.
$$
The logarithmic cutoff estimate and the distance comparison for $\psi$ give, uniformly in $v,w\in V_N$,
$$
 \Cov\big(X_{>1/N}(\psi(v/N)),X_{>1/N}(\psi(w/N))\big)
 =\log N-\log_+|v-w|+O(1).
$$
Thus assumptions \emph{(A.0)} and \emph{(A.1)} of
\cite{DingRoyZeitouni} hold, with lattice dimension $d-1$.
By \cite[Theorem~1.2]{DingRoyZeitouni}, the maximum on $\Gamma_N$ is
$\sqrt{2(d-1)}\log N+o_\PP(\log N)$.
Choose $x_N$ to be its first maximizer in a fixed ordering, so it is
$\cF_{>1/N}$-measurable.

By \eqref{eq:dercovest}, each $\partial_iX_{>1/N}$ has variance
$O(N^2)$ and canonical metric bounded by $CN^2|x-y|$.
\cref{lem:dudley} therefore gives
$$
 \E\big[\|\nabla X_{>1/N}\|_{L^\infty(\overline D)}\big]
 \lesssim N\sqrt{\log N}.
$$
Together with \eqref{eq:curved-upper-cap-geometry} and Markov's
inequality, this yields
$$
 \sup_{x\in\partial D}\sup_{y\in C_N(x)}
 |X_{>1/N}(y)-X_{>1/N}(x)|
 =O_\PP(\sqrt{\log N})=o_\PP(\log N).
$$
Combining this with the maximum estimate proves the claim.
\end{proof}

\begin{lem}
\label{lem:curved-upper-fine-cap}
There are $c_0,c_1>0$ such that, uniformly in $x\in\partial D$ and
all sufficiently large $N$,
$$
 \PP\big(M_{\leq1/N}(C_N(x))\geq c_0N^{-(d+1)}\big)\geq c_1.
$$
\end{lem}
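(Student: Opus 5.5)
The plan is a Paley--Zygmund argument applied to the fine chaos mass of the cap, $Z=M_{\leq1/N}(C_N(x))$. Its mean is exactly $\E[Z]=|C_N(x)|\asymp N^{-(d+1)}$ by \eqref{eq:curved-upper-cap-geometry}, since the fine chaos has Lebesgue intensity. Paley--Zygmund gives
$$
 \PP\big(Z\geq\tfrac12\E[Z]\big)\geq\frac{\E[Z]^2}{4\E[Z^2]}.
$$
Hence it is enough to prove $\E[Z^2]\lesssim|C_N(x)|^2$, uniformly in $x$ and $N$. Since the subcritical second moment may be infinite when $\gamma^2\geq d$, I would instead use the version of Paley--Zygmund with a moment $1<q\leq2$, $q<2d/\gamma^2$:
$$
 \PP\big(Z\geq\tfrac12\E Z\big)\geq\Big(\frac{(\E Z/2)^q}{\E[Z^q]}\Big)^{1/(q-1)}.
$$
The goal then becomes $\E[Z^q]\lesssim|C_N(x)|^q$.

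The key point is that the cap has diameter $O(N^{-1})$, which is comparable to the cutoff scale of the fine field. First I would cover $C_N(x)$ by a single ball $B$ of radius $CN^{-1}$. On $B$, the fine covariance satisfies
$$
 K_{\leq1/N}(y,z)=\log_+\frac{1}{N|y-z|}+O(1).
$$
Next I would rescale by $y=x+N^{-1}w$, which turns $\log_+(1/(N|y-z|))$ into $\log_+(1/|w-w'|)+O(1)$ on a ball of radius $C$. After rescaling, $Z=N^{-d}\widetilde M(\widetilde C)$, where $\widetilde M$ is a chaos on a unit-size ball whose covariance is log-correlated with a uniformly bounded remainder. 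The rescaled cap $\widetilde C=N(C_N(x)-x)$ has Lebesgue measure $\asymp N^{-1}$; it is a flat slab of thickness $N^{-1}$ and width $O(1)$.

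This leaves showing $\E[\widetilde M(\widetilde C)^q]\lesssim|\widetilde C|^q$. I would reuse the argument from the proof of \cref{thm:flat-boundary} for $T_h$. Write the rescaled cap in the coordinates of \cref{lem:strict-convex-geometry}(ii) as
$$
 \widetilde C\subset\{(u,v):|u|\lesssim1,\ -N^{-1}\leq v\leq0\}.
$$
The estimate $|(u-u',v-v')|\geq|u-u'|$ bounds the covariance by that of a $(d-1)$-dimensional log-correlated field extended constantly in $v$, plus an independent constant of variance $O(1)$. Kahane's inequality (\cref{prop:kahane}) with the convex function $t^q$, followed by \cref{prop:gmcmoments} in dimension $p=d-1$, then gives
$$
 \E[\widetilde M(\widetilde C)^q]\lesssim N^{-q}\,\E\big[M^{(d-1)}(\{|u|\lesssim1\})^q\big]\lesssim N^{-q}.
$$
This requires $q<2(d-1)/\gamma^2$, which holds for some $q>1$ under the standing assumption $\gamma^2<2(d-1)$. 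Kahane's inequality is applied at the regularized level, and the bound passes to the limit by Fatou. Uniformity in $x$ comes from the uniform chart constants in \cref{lem:strict-convex-geometry}.

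The main obstacle is this rescaling and comparison step. One must check that the fine covariance at scales below $1/N$, after rescaling, is dominated uniformly by the lower-dimensional comparison field. The standard cutoff estimate for $K_{\leq\delta}$ is not stated explicitly earlier in the paper; it follows by subtracting the coarse estimate from the full one, and only an upper bound up to $O(1)$ is needed. With the $q$-moment bound in hand, Paley--Zygmund yields the lemma with $c_0$ a small multiple of the implicit constant in $|C_N(x)|\asymp N^{-(d+1)}$, and with $c_1>0$.
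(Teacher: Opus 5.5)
Your proposal is correct and follows the paper's proof. The $q$-th moment Paley--Zygmund inequality is exactly the paper's H\"older step, and the bound $\E[Z^q]\lesssim N^{-q(d+1)}$ with $1<q<2(d-1)/\gamma^2$ comes from the same Kahane comparison, at the regularized level followed by Fatou, against a $(d-1)$-dimensional field extended constantly in the normal direction plus an independent Gaussian constant. The only cosmetic difference is that the paper rescales the cap anisotropically in one step, $(u,t)=(N^{-1}v,-N^{-2}w)$, onto a fixed cylinder, whereas you rescale isotropically by $N^{-1}$ and then reuse the slab bound for $T_h$ from the proof of \cref{thm:flat-boundary} with $h=N^{-1}$.
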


\begin{proof}
Fix $1<q<2(d-1)/\gamma^2$. We show that
$$
 \E[M_{\leq1/N}(C_N(x))]\asymp N^{-(d+1)}\quad \text{and} \quad
 \E[M_{\leq1/N}(C_N(x))^q]\lesssim N^{-q(d+1)}.
$$
The first estimate follows from normalization and
\eqref{eq:curved-upper-cap-geometry}. For the second, regularize the
fine field at scale $\varepsilon<N^{-1}$ and, in the coordinates of \cref{lem:strict-convex-geometry}\textup{(ii)}, set
$$
 (u,t)=(N^{-1}v,-N^{-2}w).
$$
The rescaled cap lies in a fixed cylinder $B(0,R)\times[0,1]$, with
Jacobian $N^{-(d+1)}$. Since $|y-z|\geq N^{-1}|v-v'|$, its regularized
fine covariance satisfies
$$
\begin{aligned}
 K_{\varepsilon,\leq1/N}(y,z)
 \leq\log_+\frac1{|v-v'|\vee N\varepsilon}+C
 \leq K_{N\varepsilon}^{\mathrm{fr},d-1}(v,v')+C',
\end{aligned}
$$
with constants independent of $\varepsilon,x,N$.
The comparison field is the $(d-1)$-dimensional finite-range field as in \cref{sec:finite-range-model},
constant in $w$, plus an independent Gaussian constant.
Kahane's inequality, integration in $w$ and
\cref{prop:gmcmoments} give the asserted $q$-moment bound
uniformly in $\varepsilon$, while Fatou's lemma removes the regularization.

For $Z=M_{\leq1/N}(C_N(x))$ and $m=\E[Z]$, H\"older's inequality gives
$$
 m/2\leq\E[Z\1_{\{Z\geq m/2\}}]
 \leq\E[Z^q]^{1/q}\PP(Z\geq m/2)^{1-1/q}.
$$
The two moment estimates therefore imply the desired uniform positive
probability.
\end{proof}

Fix $a>2+\alpha_{\gamma,d-1}$ and choose $b<\sqrt{2(d-1)}$ such that
$$
 d+1+\frac{\gamma^2}{2}-\gamma b<a.
$$
On an event whose probability tends to one,
\cref{lem:curved-upper-boundary-maximum} gives
$\inf_{C_N(x_N)}\rho_{>1/N}\geq N^{\gamma b-\gamma^2/2}$.
Conditionally on $\cF_{>1/N}$, the centre $x_N$ is fixed and the fine
field is independent. \cref{lem:curved-upper-fine-cap} and
\eqref{eq:curved-upper-multiplicative-splitting} consequently give
$$
 \liminf_{N\to\infty}
 \PP\big(Z_N\geq N^{-a}\big)>0,
 \qquad
 Z_N=\max_{x\in\Gamma_N}M_{\gamma,D}(C_N(x)).
$$
The maxima are over finite deterministic sets and are measurable.
Reverse Fatou gives positive probability that $Z_N\geq N^{-a}$
infinitely often. On that event, the cap bound above rules out every
Fourier decay exponent $s>a$, so
$$
 \PP\big(\dim_F(M_{\gamma,D})\leq a\big)>0.
$$

Finally, deleting any finite number of independent scale blocks changes
$M_{\gamma,D}$ by a smooth strictly positive multiplier on
$\overline D$. The multiplier argument in
\cref{prop:smooth-transfer} shows that its Fourier dimension
is unchanged. Hence $\dim_F(M_{\gamma,D})$ is measurable with respect to
the completed tail $\sigma$-field of those blocks, and Kolmogorov's
zero--one law upgrades the last probability to one.
Letting $a\to2+\alpha_{\gamma,d-1}$ along a countable sequence and
combining with the correlation bound proves
$
 \dim_F(M_{\gamma,D})
 \leq\min\{D_{\gamma,d},2+\alpha_{\gamma,d-1}\}
$ almost surely.

\subsection{The lower bound}\label{sec:curved-lower}

We prove the lower bound
$$
 \dim_F(M_{\gamma,D})
 \geq \min\{D_{\gamma,d},2+\alpha_{\gamma,d-1}\}.
$$
Fix
$
 s<\min\{D_{\gamma,d},\,2+\alpha_{\gamma,d-1}\}.
$
The case $s\leq2$ follows already from
\cref{thm:finite-perimeter}, so we assume $s>2$ and put
$
 a=s-2<\alpha_{\gamma,d-1}.
$
We use the joint realization and augmented filtration of \cref{sec:finite-range-model}, and write
$$
 Y_t=X_{>e^{-t}},
 \qquad
 \rho_t=\rho_{>e^{-t}},
 \qquad
 \cF_t=\cF_{>e^{-t}}.
$$

We first reduce the lower bound to a surface estimate.  The only difference
from the torus argument is the presence of the boundary terms created by
integration by parts.
For $\xi\neq0$, let $\omega=\xi/|\xi|$ and
$\partial_\omega=\omega\cdot\nabla$.  Repeated use of the divergence theorem
gives, for every integer $J\geq1$,
\begin{align}
 \int_D e^{-i\xi\cdot x}\rho_{>\delta}(x)\,dx
 &=\frac{1}{(i|\xi|)^J}
   \int_D e^{-i\xi\cdot x}
   \partial_\omega^J\rho_{>\delta}(x)\,dx
 \notag\\
 &\quad-
 \sum_{\ell=0}^{J-1}\frac{1}{(i|\xi|)^{\ell+1}}
 \int_S e^{-i\xi\cdot x}(\omega\cdot n(x))
 \partial_\omega^\ell\rho_{>\delta}(x)\,d\sigma(x).
 \label{eq:domain-ibp-lower}
\end{align}
As before, set
$$
 N^D_{\delta,J}
 =\sum_{|\beta|\leq J}\delta^{|\beta|}
 \int_D|\partial^\beta\rho_{>\delta}(x)|\,dx.
$$
The volume term in \eqref{eq:domain-ibp-lower} is bounded by
$(|\xi|\delta)^{-J}N^D_{\delta,J}$.  As in the finite perimeter proof,
\cref{lem:localmoments} at $u=1$ gives summable expectations for its
annular norms once the scales $\delta_n$ below are chosen and $J$ is
sufficiently large.
For $0\leq\ell\leq J-1$, define the normalized boundary transform
$$
 I_{\ell,\delta}(\xi)
 =\delta^\ell\int_S e^{-i\xi\cdot x}
 (\omega\cdot n(x))\partial_\omega^\ell\rho_{>\delta}(x)
 \,d\sigma(x).
$$
For $\eta>0$, let
$
 \delta_n=2^{-\lfloor(1-\eta)n\rfloor}
$ as before.
Since
$|\xi|\delta_n\gtrsim2^{\eta n}$ on $A_n^\R$, the $\ell$-th boundary term in
\eqref{eq:domain-ibp-lower} satisfies
$$
 \Big\|
 |\xi|^{-1}(|\xi|\delta_n)^{-\ell}
 I_{\ell,\delta_n}(\xi)
 \Big\|_{B_{k,s,n}^\R}
 \lesssim
 \|I_{\ell,\delta_n}\|_{B_{k,s-2,n}^\R}.
$$
Thus it remains to prove
\begin{equation}\label{eq:remaining-boundary-series}
 \sum_{n\geq0}
 \|I_{\ell,\delta_n}\|_{B_{k,s-2,n}^\R}^{2k}<\infty,
 \qquad 0\leq\ell\leq J-1,
\end{equation}
almost surely.
The fine fluctuation $\widetilde M_{\leq\delta_n,D}$ is summable in
$B_{k,s,n}^\R$ by \cref{lem:domain-fine-part}.  Thus
\eqref{eq:remaining-boundary-series} is the only new estimate.

\subsubsection{Good events for local surface masses}
The following surface estimates hold throughout $\gamma^2<2(d-1)$,
independently of the assumption $s>2$ in the argument above.
They will also be used in the proof of \cref{thm:curved-surface}.

The derivatives $\partial^\beta$ below are derivatives in the
ambient space $\R^d$, evaluated on $S$.  For $x\in S$ and $u>0$, set
$
 B_S(x,u)=S\cap B(x,u),
$
where $B(x,u)$ is the Euclidean ball in $\R^d$ of center $x$ and
radius $u$.
For an integer $J\geq0$, define
$$
 N^S_{r,J}(x,u)
 =\sum_{|\beta|\leq J}r^{|\beta|}
 \int_{B_S(x,u)}|\partial^\beta\rho_{>r}(y)|\,d\sigma(y).
$$

Fix $0<b<\alpha_{\gamma,d-1}$ and $\varepsilon>0$.  For $n\geq1$,
$0<\delta\leq1$, and $J\geq0$, let
$$
 \mathcal G_n(\delta,J)
 =
 \Big\{
 N^S_{r,J}(x,u)
 \leq C_0 2^{n\varepsilon}u^b
 \text{ for all }\delta\leq r\leq u\leq1,\ x\in S
 \Big\},
$$
where $C_0$ is a sufficiently large deterministic constant.

\begin{lem}[Uniform local surface bounds]
\label{lem:marked-frostman-event}
Set $q=\sqrt{2(d-1)}/\gamma$.  For every $J\geq0$,
$$
 \PP\big(\mathcal G_n(\delta,J)^c\big)
 \lesssim_{\varepsilon,b,q,J,C_0}
 (1+|\log\delta|)2^{-n\varepsilon q}.
$$
Consequently, if $\delta_n\geq2^{-n}$, then
$\mathcal G_n(\delta_n,J)$ occurs for all sufficiently large $n$, almost
surely.
\end{lem}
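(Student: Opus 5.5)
We reduce the uniform statement to finitely many dyadic scales and centres, and bound each by a moment estimate with exponent $q=\sqrt{2(d-1)}/\gamma=q_{+,d-1}$.

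\emph{Discretization.} Let $r_i=2^{-i}$ and $u_j=2^{-j}$ with $0\leq j\leq i\leq i_{\max}$, where $2^{-i_{\max}}\asymp\delta$. For each $j$ fix a maximal $u_j$-separated net $\Lambda_j\subset S$, so $\#\Lambda_j\lesssim u_j^{-(d-1)}$. Any $B_S(x,u)$ with $u_{j+1}<u\leq u_j$ lies in $B_S(z,2u_j)$ for some $z\in\Lambda_j$. For $r_{i+1}<r\leq r_i$, write $\rho_{>r}=\rho_{>r_i}\cdot e^{\gamma X_{r,r_i}-\frac{\gamma^2}{2}\log(r_i/r)}$. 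The last factor is a smooth field over a bounded range of scales, whose normalized derivatives $r^{|\beta|}\partial^\beta$ are controlled. Taking a supremum over this bounded scale window, I expect one to pass to a discrete quantity $\widetilde N_{i,j}(z)$ with the same moment bounds. A cleaner option is to replace $N^S_{r,J}$ by its supremum over $r\in[r_{i+1},r_i]$ and bound that supremum via the martingale structure in $t=-\log r$. In either case the event $\mathcal G_n^c$ is contained in the union over $(i,j,z)$ of the events $\{\widetilde N_{i,j}(z)>c\,C_0 2^{n\varepsilon}u_j^b\}$.

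\emph{Moment bound.} The key estimate is
$$\E\big[\widetilde N_{i,j}(z)^q\big]\lesssim u_j^{\zeta_{d-1}(q)},\qquad r_i\leq u_j.$$
The model is \cref{lem:localmoments}, localized to a surface ball. First, Faà di Bruno and the Gaussian shift (\cref{lem:gaussian-shift}) reduce the derivative terms to the density itself. For $q\geq1$ apply Minkowski: the $L^q$-norm of $\int_{B_S}|\partial^\beta\rho_{>r}|\,d\sigma$ is comparable to that of $\int_{B_S}\rho_{>r}\cdot(\text{polynomial in normalized derivatives})$. Hölder with a slightly larger exponent separates the Gaussian polynomial factors, which have all moments bounded uniformly. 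What remains is the $q$-th moment of the regularized surface chaos mass $M^S_{\gamma,r}(B_S(z,2u_j))$. By \cref{prop:gmcmoments} (surface case, $\varepsilon=r\leq u_j$) this is $\lesssim u_j^{\zeta_{d-1}(q)}$.

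Here a small loss appears, since Hölder forces a moment $q'>q$ that may exceed $2(d-1)/\gamma^2$. I would instead absorb the polynomial factor by a Gaussian-shift argument directly at exponent $q$. Concretely, write $\E[(\int\rho P)^q]$ and use Kahane/Cameron--Martin on the tilted measure as in the proof of \cref{lem:localmoments}. Alternatively, lower $q$ slightly: since $b<\alpha_{\gamma,d-1}$ there is room, and one obtains a rate $2^{-n\varepsilon q'}$ with $q'$ close to $q$. If the exact exponent $q$ is needed, this is the delicate point. Note that $q<2(d-1)/\gamma^2$ always holds because $q_{+,p}<2p/\gamma^2$.

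\emph{Union bound.} By Markov,
$$\PP(\mathcal G_n^c)\lesssim\sum_{i\lesssim|\log\delta|}\ \sum_{j\leq i}u_j^{-(d-1)}\frac{u_j^{\zeta_{d-1}(q)}}{C_0^q2^{n\varepsilon q}u_j^{bq}}.$$
The exponent of $u_j$ is $\zeta_{d-1}(q)-(d-1)-bq=q\big(\alpha_{\gamma,d-1}-b\big)>0$, using the variational identity at $q=q_{+,d-1}$, where $(\zeta_p(q)-p)/q=\alpha_{\gamma,p}$. So the sum over $j$ is bounded, and the sum over $i$ contributes $1+|\log\delta|$. This gives the bound stated in \cref{lem:marked-frostman-event}. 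If $\delta_n\geq2^{-n}$, the bound is $O(n2^{-n\varepsilon q})$, which is summable, and Borel--Cantelli yields the almost sure conclusion.

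The main obstacle is controlling the continuum supremum over $r$ and $x$ with only a bounded-factor loss. This is mainly the scale supremum inside a dyadic window, which requires a maximal inequality (Doob, \cref{lem:martingale-inequalities}(i)) for the positive submartingale $t\mapsto\int|\partial^\beta\rho_t|$ normalized. Keeping the moment exponent exactly at $q$ is the other difficult point.
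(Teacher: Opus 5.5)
Your architecture is the paper's: dyadic radii with $u$-nets of $O(u^{-(d-1)})$ centres, Doob's inequality for the nonnegative submartingales $t\mapsto\int_{B_S(x,u)}|\partial^\beta\rho_t|\,d\sigma$ on each dyadic cutoff window (where the weights $r^{|\beta|}$ are comparable), Markov at exponent $q$ using $\zeta_{d-1}(q)-(d-1)-bq=q(\alpha_{\gamma,d-1}-b)>0$, $O(1+|\log\delta|)$ windows, and Borel--Cantelli. The gap is the one non-routine input, the estimate $\E[N^S_{r,J}(x,u)^q]\lesssim u^{\zeta_{d-1}(q)}$ for $r\leq u$. You leave it open, and none of your suggested mechanisms gives it.

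The Gaussian shift in \cref{lem:localmoments} handles pointwise moments $\E[\rho_{>r}(y)^uF(\mathbf D_r(y))]$. For the $q$-th power of a spatial integral with non-integer $q$, there is no single exponential weight to remove. Integrating pointwise moments by Minkowski only yields $u^{q(d-1)}r^{-\gamma^2q(q-1)/2}$, which is not the multifractal bound. The natural way to separate the polynomial factor by H\"older is to pull out $\sup_{B_S(x,u)}(1+|\mathbf D_r|)^J$. Its moments grow like powers of $\log(u/r)$, so you pick up a factor $(1+|\log\delta|)^{O(J)}$ that the stated bound does not allow. The obstruction you cite, that $q'>q$ might exceed $2(d-1)/\gamma^2$, is not the issue, since $q<2(d-1)/\gamma^2$ strictly. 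Lowering $q$ to $q'$ changes the rate to $2^{-n\varepsilon q'}$ and does not by itself treat the derivative factors. Such lossy versions would still give the Borel--Cantelli consequence, but not the lemma as stated.

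The missing idea is to dominate the polynomial by exponentials of linear functionals of $\mathbf D_r=(r^{|\beta|}\partial^\beta X_{>r})_{1\leq|\beta|\leq J}$, with $m$ components. After Fa\`a di Bruno,
$$
 \sum_{|\beta|\leq J}r^{|\beta|}|\partial^\beta\rho_{>r}|
 \lesssim_J\rho_{>r}(1+|\mathbf D_r|)^J
 \lesssim_J\sum_{v\in\{\pm1\}^m}\rho_{>r}e^{v\cdot\mathbf D_r},
$$
since $(1+|z|)^J\lesssim_Je^{\sum_i|z_i|}\leq\prod_i(e^{z_i}+e^{-z_i})$. Each $\rho_{>r}e^{v\cdot\mathbf D_r}$ is a bounded multiple of the normalized exponential of the Gaussian field $\gamma X_{>r}+v\cdot\mathbf D_r$. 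By \eqref{eq:dercovest}, the kernels $\Cov(\mathbf D_r(y),\mathbf D_r(z))$ and $\Cov(\mathbf D_r(y),X_{>r}(z))$ are uniformly bounded. Hence this field's covariance, and its variance normalization, differ from those of $\gamma X_{>r}$ by bounded amounts. Now apply Kahane's inequality (\cref{prop:kahane}), comparing with $\gamma X_{>r}$ plus an independent Gaussian constant of bounded variance. Together with \cref{prop:gmcmoments}, this gives the moment bound exactly at $q$, uniformly in $r\leq u$ and $x$. With this estimate in hand, the rest of your argument produces the stated bound.
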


\begin{proof}
Since $\gamma^2<2(d-1)$,
$$
 1<q<\frac{2(d-1)}{\gamma^2},
 \qquad
 \zeta_{d-1}(q)-(d-1)-bq
 =q(\alpha_{\gamma,d-1}-b)>0.
$$
We first show that, uniformly for $r\leq u\leq1$ and $x\in S$,
$$
 \E\big[N^S_{r,J}(x,u)^q\big]
 \lesssim_{q,J}u^{\zeta_{d-1}(q)}.
$$
Let
$\mathbf D_r=(r^{|\beta|}\partial^\beta X_{>r})_{1\leq|\beta|\leq J}$.
Fa\`a di Bruno's formula gives
$$
 \sum_{|\beta|\leq J}r^{|\beta|}
 |\partial^\beta\rho_{>r}(y)|
 \lesssim_{\gamma,J}
 \rho_{>r}(y)(1+|\mathbf D_r(y)|)^J.
$$
For $J=0$, the right-hand side is simply $\rho_{>r}(y)$.
The polynomial factor is bounded by a finite sum of
$e^{v\cdot\mathbf D_r}$, with fixed vectors $v$.
By \eqref{eq:dercovest}, the kernels
$\Cov(\mathbf D_r(y),\mathbf D_r(z))$ and
$\Cov(\mathbf D_r(y),X_{>r}(z))$ are uniformly bounded.
Thus each Gaussian field $\gamma X_{>r}+v\cdot\mathbf D_r$ has
covariance differing from that of $\gamma X_{>r}$ by a bounded kernel,
and its exponential normalization differs by a bounded factor.
Kahane's inequality and \cref{prop:gmcmoments} prove the
displayed moment estimate.

The identity $\E[\rho_t\mid\cF_v]=\rho_v$ can be differentiated
spatially at positive cutoffs, so $\partial^\beta\rho_t(y)$ is a
martingale. Conditional Jensen and Fubini show that
$\int_{B_S(x,u)}|\partial^\beta\rho_t(y)|\,d\sigma(y)$ is a
nonnegative submartingale. Since powers of $r$ are comparable on one
dyadic interval, Doob's inequality (\cref{lem:martingale-inequalities}\textup{(i)}) and the preceding estimate at its
terminal cutoff give, for $r_0/2\leq u\leq1$,
$$
 \E\Big[\sup_{r_0/2\leq r\leq r_0}
 N^S_{r,J}(x,u)^q\Big]
 \lesssim_{q,J}u^{\zeta_{d-1}(q)}.
$$
Test dyadic radii $u$ and $u$-nets of $S$, each with $O(u^{-(d-1)})$
centers. Fixed enlargements of the balls cover every center and
intermediate radius. Markov's inequality bounds the failure probability
for each radius and scale interval by
$$
 C2^{-n\varepsilon q}
 u^{\zeta_{d-1}(q)-(d-1)-bq}.
$$
The positive exponent makes the sum over radii finite. There are
$O(1+|\log\delta|)$ dyadic cutoff intervals between $\delta$ and one,
which proves the claim. If $\delta_n\geq2^{-n}$, the probabilities are
summable in $n$, and Borel--Cantelli finishes the proof.
\end{proof}

\subsubsection{Curved surface estimate}
For $\omega\in\mathbb S^{d-1}$, let
$\phi_\omega(x)=\omega\cdot x$ on $S$.  We use the notation
$x_+(\omega),x_-(\omega)$ for its two critical points.  By
\cref{lem:strict-convex-geometry}(iv),
$$
 |\nabla_S\phi_\omega(x)|
 \asymp
 \dist_S(x,\{x_+(\omega),x_-(\omega)\})
$$
uniformly in $x$ and $\omega$.  This is the geometric input in the proof of
\eqref{eq:remaining-boundary-series}.

\begin{prop}\label{prop:curved-surface-estimate}
Assume that $\gamma^2<2(d-1)$. Let
$0<a<\alpha_{\gamma,d-1}$ and let $J,k\geq1$ be integers.
There exists $\eta_0>0$, independent of $J$ and $k$, such that the
following holds. For every $0<\eta<\eta_0$ and
$
\delta_n=2^{-\lfloor(1-\eta)n\rfloor},
$
almost surely, simultaneously for every $0\leq\ell\leq J-1$,
$$
 \sum_{n\geq1}
 \|I_{\ell,\delta_n}\|_{B_{k,a,n}^\R}^{2k}<\infty.
$$
The conclusion also holds when the factor $\omega\cdot n(x)$ in
$I_{\ell,\delta}$ is replaced by any fixed
$A\in C^\infty(S\times\mathbb S^{d-1})$.
\end{prop}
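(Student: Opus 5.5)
Following the proof overview, I will reveal the field continuously in scale while moving a cutoff around the two stationary points of $\phi_\omega$. Fix $n$, $\xi\in A_n^\R$ with $\omega=\xi/|\xi|$, and write $R=|\xi|\asymp2^n$. Put $t_0=0$ and $T=\log(1/\delta_n)$, so that $e^{-T}=\delta_n$. Choose a smooth cutoff $\chi_t(x)$ on $S$ equal to one on $\dist_S(x,\{x_\pm(\omega)\})\leq \lambda(t)$ and vanishing at twice that distance. The radius $\lambda(t)$ grows from about $R^{-1/2}2^{\eta' n}$ (the stationary-phase scale, slightly enlarged) at $t=0$ up to a fixed size at $t=T$. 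I would also try the reverse arrangement, a cutoff shrinking as the scale refines, and adopt whichever yields a favourable sign in the drift term. Set
$$
F_t(\xi)=\int_S e^{-i\xi\cdot x}A(x,\omega)\chi_t(x)\,\delta_n^\ell\partial_\omega^\ell\rho_t(x)\,d\sigma(x),
$$
and write $G_t(\xi)$ for the same integral with $\chi_t$ replaced by $1-\chi_t$. Then $I_{\ell,\delta_n}=F_T+G_T$. Because $\partial^\beta\rho_t$ is a martingale in $t$, It\^o's formula gives
$$
dF_t=\int_S e^{-i\xi\cdot x}A\,\chi_t\,\delta_n^\ell\,d(\partial_\omega^\ell\rho_t)\,d\sigma+\int_S e^{-i\xi\cdot x}A\,\partial_t\chi_t\,\delta_n^\ell\partial_\omega^\ell\rho_t\,d\sigma\,dt.
$$
This splits $F_T$ into three pieces: the initial value $F_0$, a martingale part $\mathcal M_T$, and a drift part supported on the moving annulus $\{\lambda(t)\leq\dist_S\leq2\lambda(t)\}$.

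\emph{Terms away from the stationary points.} The terms $G_T$ and the drift are treated by tangential integration by parts. On the support of $1-\chi_t$ or of $\partial_t\chi_t$ we have $|\nabla_S\phi_\omega|\gtrsim\lambda$ by \cref{lem:strict-convex-geometry}(iv). Each integration by parts therefore gains a factor $(R\lambda)^{-1}$ and costs derivatives of $\chi_t$, of order $\lambda^{-1}$, and of $\rho_t$, of order $e^{t}$. The net gain per step is therefore $(R\lambda\min(\lambda,e^{-t}))^{-1}$. The integral of $\partial_t\chi_t$ contributes only a logarithm. Repeating this $J'$ times, with $J'$ chosen large, gives $2^{-cJ'\eta n}$ multiplied by the quantities $N^S_{r,J'}$. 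These are bounded on the good event $\mathcal G_n(\delta_n,J')$ of \cref{lem:marked-frostman-event}, so these pieces are negligible in $B_{k,a,n}$ once $J'$ is large. I would arrange the schedule $\lambda(t)$ precisely so that $R\lambda(t)e^{-t}\geq 2^{\eta'n}$ holds.

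\emph{Martingale term (main obstacle).} The quadratic variation is
$$
d\langle\mathcal M\rangle_t\lesssim\iint\chi_t\chi_t\,k_{e^{-t}}(x-y)\,(\text{derivative factors of }\rho_t)\,d\sigma\,d\sigma\,dt,
$$
where $|k_{e^{-t}}|\leq\1_{|x-y|\leq e^{-t}}$ and the factors carry $\delta_n^{\ell}e^{t\ell}$ gains. This is bounded by $\int_0^T \sup_x N^S_{e^{-t},J}(x,e^{-t})\,N^S_{e^{-t},J}(x_\pm,2\lambda(t))\,dt$. On $\mathcal G_n$ it is therefore $\lesssim 2^{2n\varepsilon}\int e^{-tb}\lambda(t)^b\,dt$. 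Evaluating this requires care; the key point is that the total mass near $x_\pm$ within distance $\lambda$ is at most $\lambda^b$, and it decays. \cref{lem:martingale-inequalities}(iii), applied on the stopped event where $\mathcal G_n$ holds up to time $t$, gives $\E|\mathcal M_T|^{2k}\1_{\mathcal G_n}\lesssim v^k$. The initial value satisfies $|F_0|\lesssim N^S(x_\pm,\lambda(0))\lesssim 2^{n\varepsilon}R^{-b/2}$. The target is $|F_T|\lesssim 2^{n(\varepsilon'-b/2)}$ pointwise in $\xi$, in $2k$-th moment on $\mathcal G_n$. I expect the main difficulty to be making the martingale bracket bound uniformly this small, which needs the overlap of $k_{e^{-t}}$ with a $\lambda$-neighbourhood to contribute $\lambda^b e^{-tb}$ only at $t\approx\log R/2$.

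\emph{Summation.} Taking $\E\|\cdot\|^{2k}_{B_{k,a,n}}$ over the annulus gives $2^{nka}\,2^{nk(2\varepsilon-b)}$ plus the failure probability of $\mathcal G_n$, which is almost surely eventually irrelevant. Choosing $a<b<\alpha_{\gamma,d-1}$, then $\varepsilon$ and $\eta_0$ small, makes the series summable. Borel--Cantelli then gives almost sure summability, for finitely many $\ell$ simultaneously. Only smoothness of $A$ was used, which proves the final assertion.
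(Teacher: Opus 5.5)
\textbf{The gap is in the quadratic-variation bound.} Your overall architecture is the paper's: an expanding cutoff around $x_\pm(\omega)$, tangential integration by parts to kill the drift, and bracket control on $\mathcal G_n$. However, the bracket estimate you write down does not give the size you claim.

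You reveal scales from $t=0$, i.e.\ $r=e^{-t}=1$. Your own integration-by-parts constraint forces $\lambda(t)\gtrsim R^{-1/2}$ at all times. Hence
$\int_0^T e^{-tb}\lambda(t)^b\,dt\gtrsim\int_0^1 e^{-tb}R^{-b/2}\,dt\asymp R^{-b/2}$,
and this integral is dominated by $t$ near $0$. It is not the required $R^{-b}$.

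With \cref{lem:martingale-inequalities}(iii), this only yields $|I_{\ell,\delta_n}(\xi)|\lesssim R^{-b/4+o(1)}$ in $2k$-th moment. Then $\E\|I_{\ell,\delta_n}\|_{B_{k,a,n}^\R}^{2k}\approx R^{ka-kb/2}$, which is summable only for $a<b/2$. As written, your argument therefore proves the proposition only for $a<\alpha_{\gamma,d-1}/2$, which halves the boundary and surface exponents.

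There is a second problem in the same regime. While $e^{-t}>2\lambda(t)$, the factor $N^S_{e^{-t},J}(x_\pm,2\lambda(t))$ has radius below the cutoff scale. The event $\mathcal G_n$ is defined only for $\delta\le r\le u$, so it does not control this factor.

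Your diagnosis that the product must ``contribute only at $t\approx\frac12\log R$'' also misidentifies the issue. For the minimal schedule allowed by your constraint, $e^{-t}\lambda(t)=h$ is constant on the whole expanding stretch, and that stretch costs only a factor $n$. The loss comes from the earlier stretch, before the cutoff radius overtakes the kernel range.

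\textbf{The missing step} is to begin the revelation at scale $r=\sqrt h$, with $h=R^{-(1-\vartheta)}$, rather than at $r=1$. Then use the radius $u(r)=\min\{1,h/r\}$, so that $r\le u(r)$ throughout.

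All coarser scales are absorbed into the initial term $F_{t_0}$, with $t_0=\log(1/\sqrt h)$. On $\mathcal G_n$ this term is bounded by the local mass $N^S_{\sqrt h,J'}(x_\pm,C\sqrt h)\lesssim Lh^{b/2}$, where $L=C_02^{n\varepsilon}$. That is already the target size.

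The two bracket factors are then $Lr^b$, from the inner ball $B_S(x,r)$, and $Lu(r)^b$, from the support of the cutoff. Both lie within the range of $\mathcal G_n$. Moreover, $r^bu(r)^b=h^b$ for $h\le r\le\sqrt h$, and $r^b\le h^b$ for $r\le h$. This gives $\langle Z\rangle_{t_1}\lesssim nL^2h^b$.

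If you prefer to start at $r=1$, you must keep both cutoffs when $e^{-t}>\lambda(t)$. Bound the bracket there by the square of the weighted mass of $\operatorname{supp}\chi_t$, which is $\lesssim L^2\lambda(t)^{2b}$. This requires extending the good event to radii $u<r$; its moment proof permits this, because $r^{-\gamma^2(q^2-q)/2}\le u^{-\gamma^2(q^2-q)/2}$ there.

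Finally, the ``reverse arrangement'' is not an option, and no sign issue arises in the drift. Integrating by parts at scale $r\le\lambda$ requires $R\lambda r\gg1$. So the cutoff must expand at least like $1/(Rr)$ as the scales refine. A shrinking cutoff would leave a terminal complement term that cannot be integrated by parts at scale $\delta_n$.
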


\begin{proof}
We prove the statement with $A_\omega(x)=A(x,\omega)$. The choices used
below are $A_\omega(x)=\omega\cdot n(x)$ for the boundary terms and
$A_\omega\equiv1$ for surface chaos. Constants may depend on the fixed surface and parameters, on $J,k$,
and on finitely many derivatives of $A$, but are uniform in $n$,
$\xi\in A_n^\R$, the intermediate scale, and
$0\leq\ell\leq J-1$. The good event $\mathcal G_n$ controls all
surface centers simultaneously and does not depend on $\xi$.

We reveal the field while expanding a smooth cutoff around the two
stationary points. At scale $r$, its radius is comparable to $h/r$, until
it covers the whole surface. The relation $r(h/r)=h$ will give the desired
gain in the quadratic variation. The derivative of the cutoff is
supported away from the stationary points, where we use tangential
integration by parts.

Choose $a<b<\alpha_{\gamma,d-1}$ and $\vartheta,\varepsilon>0$ so small
that
$$
 b(1-\vartheta)>a+2\varepsilon.
$$
Take $\eta_0=\vartheta/2$, choose an integer $K$ with
$K\vartheta>a/2+\varepsilon$, and put $J_0=J+K$.
These choices are independent of $k$, and $\eta_0$ is independent of $J$.
Fix $0<\eta<\eta_0$, $0\leq\ell\leq J-1$ and $\xi\in A_n^\R$, and write
$$
 R=2^n,\qquad h=R^{-(1-\vartheta)},\qquad
 \delta=\delta_n,\qquad
 t_0=\log\frac1{\sqrt h},\qquad t_1=\log\frac1\delta.
$$
For all sufficiently large $n$, $\delta\leq h$.
We work on the event
$\mathcal G_n=\mathcal G_n(\delta,J_0)$ from
\cref{lem:marked-frostman-event}, and abbreviate
$L=C_0 2^{n\varepsilon}$.
Fixed enlargements of surface balls are harmless: when their radius
exceeds one, use a fixed finite cover of $S$ by unit balls.

Let $\chi\in C^\infty([0,\infty))$ be nonincreasing, equal to one on
$[0,1]$ and zero on $[4,\infty)$. For $t_0\leq t\leq t_1$, put
$r=e^{-t}$, $\omega=\xi/|\xi|$ and
$$
 c_t(x)
 =\chi\big(\frac{r^2}{h^2}|\nabla_S\phi_\omega(x)|^2\big).
$$
The squared gradient makes this a smooth function even at the stationary
points. With $\lambda=h/r$, the cutoff equals one where
$|\nabla_S\phi_\omega|\leq\lambda$ and zero where
$|\nabla_S\phi_\omega|\geq2\lambda$. Its support expands as $r$ decreases.
By \cref{lem:strict-convex-geometry}\textup{(iv)}, with
$u(r)=\min\{1,h/r\}$,
$$
 \operatorname{supp}c_t
 \subset B_S(x_+(\omega),Cu(r))
 \cup B_S(x_-(\omega),Cu(r)),
$$
where a fixed enlargement covers $S$ when $u(r)$ is comparable to one.
At $t=t_0$, these balls have radius $O(\sqrt h)$.
Starting at $r=\sqrt h$ ensures $r\leq u(r)$ throughout the interval.
Since $|\nabla_S\phi_\omega|\leq1$ and $\delta\leq h$, we have
$c_{t_1}\equiv1$.

Define
$$
 F_t
 =\delta^\ell\int_S e^{-i\xi\cdot x}A_\omega(x)c_t(x)
 \partial_\omega^\ell\rho_t(x)\,d\sigma(x).
$$
Spatial differentiation of $d\rho_t=\gamma\rho_t\,dY_t$ gives
$$
 d(\partial_\omega^\ell\rho_t)
 =\gamma\sum_{j=0}^\ell\binom{\ell}{j}
 \partial_\omega^{\ell-j}\rho_t\,
 d(\partial_\omega^jY_t).
$$
The cutoff $c_t$ is deterministic and has finite variation in time.
Thus It\^o's product rule gives the semimartingale decomposition
$$
 F_t=F_{t_0}+\int_{t_0}^t B_v\,dv+Z_t,
$$
where
$$
 B_t=\delta^\ell\int_S e^{-i\xi\cdot x}A_\omega(x)
 \dot c_t(x)\partial_\omega^\ell\rho_t(x)\,d\sigma(x)
$$
and $Z_{t}$ is the continuous complex martingale defined by $Z_{t_0}=0$ and
$$
 dZ_t
 =\gamma\delta^\ell\sum_{j=0}^\ell\binom{\ell}{j}
 \int_S e^{-i\xi\cdot x}A_\omega(x)c_t(x)
 \partial_\omega^{\ell-j}\rho_t(x)\,
 d(\partial_\omega^jY_t(x))\,d\sigma(x).
$$
Since $r\geq\delta>0$, \eqref{eq:dercovest} and
Gaussian exponential moments bound the required spatial derivatives of
$\rho_t$ in $L^2$, uniformly on this finite time interval and on $S$.
The derivatives of the increment covariance $k_r$ are bounded there as
well. These bounds justify spatial differentiation and stochastic
Fubini. In particular, the real and imaginary parts of $Z$ are continuous
square-integrable martingales.

Since $c_{t_1}=1$, the decomposition gives
$$
 I_{\ell,\delta}(\xi)
 =F_{t_0}+\int_{t_0}^{t_1}B_t\,dt+Z_{t_1}.
$$
On $\mathcal G_n$, the initial support estimate and
$\delta\leq\sqrt h$ imply
$$
 |F_{t_0}|\lesssim Lh^{b/2}.
$$

The term $B_t$ comes entirely from the changing cutoff. When $r\leq h$,
we have $c_t\equiv1$, so $B_t=0$.
For $h\leq r\leq\sqrt h$, put $\lambda=h/r$.
If $z=|\nabla_S\phi_\omega|^2/\lambda^2$, then
$\dot c_t=-2z\chi'(z)$. Consequently,
$$
 \operatorname{supp}\dot c_t
 \subset\{\lambda\leq|\nabla_S\phi_\omega|\leq2\lambda\},
 \qquad
 |\nabla_S^m\dot c_t|\lesssim_m\lambda^{-m}.
$$
The derivative bounds follow from nondegeneracy of the stationary
points, uniformly in $\omega$ (for $\lambda$ bounded below they follow
from compactness).
On this support, use the tangential vector field
$$
 V_\omega
 =\frac{\nabla_S\phi_\omega}{|\nabla_S\phi_\omega|^2},
 \qquad
 V_\omega\big(e^{-i|\xi|\phi_\omega}\big)
 =-i|\xi|e^{-i|\xi|\phi_\omega}.
$$
Its formal adjoint is
$V_\omega^*f=-\operatorname{div}_S(V_\omega f)$, and
$|\nabla_S^mV_\omega|\lesssim_m\lambda^{-m-1}$.
Since the amplitude is supported away from the stationary points,
integrating by parts tangentially $K$ times on the closed surface gives
$$
 B_t
 =\frac{\delta^\ell}{(-i|\xi|)^K}
 \int_S e^{-i|\xi|\phi_\omega}
 (V_\omega^*)^K
 \big(A_\omega\dot c_t\,\partial_\omega^\ell\rho_t\big)\,d\sigma.
$$
A term with $j$ additional derivatives on the density has coefficient
bounded by
$|\xi|^{-K}\delta^\ell\lambda^{-(2K-j)}$, where $0\leq j\leq K$.
Tangential derivatives of order $j$ of $\partial_\omega^\ell\rho_t$
are combinations of ambient derivatives of orders at most $\ell+j$,
with bounded geometric coefficients. Since $\delta\leq r\leq\lambda$,
$$
 \delta^\ell\lambda^{-(2K-j)}
 \leq(\lambda r)^{-K}r^{\ell+j}.
$$
Therefore, on $\mathcal G_n$,
$$
 |B_t|
 \lesssim (|\xi|\lambda r)^{-K}
 \big(N^S_{r,J_0}(x_+(\omega),C\lambda)
      +N^S_{r,J_0}(x_-(\omega),C\lambda)\big)
 \lesssim L(Rh)^{-K}\lambda^b.
$$
Here $\lambda r=h$, and $dt=d\lambda/\lambda$ on this interval, so
$$
 \Big|\int_{t_0}^{t_1}B_t\,dt\Big|
 \lesssim L(Rh)^{-K}
 \int_{\sqrt h}^1\lambda^b\frac{d\lambda}{\lambda}
 \lesssim L(Rh)^{-K}.
$$

It remains to estimate $Z$. As in the preliminaries, write
$\langle Z\rangle=\langle\Re Z\rangle+\langle\Im Z\rangle$. Put
$$
 W_r(x)=\sum_{|\beta|\leq J_0}
 r^{|\beta|}|\partial^\beta\rho_{>r}(x)|.
$$
The increment covariance gives, for ambient multi-indices $\mu,\nu$,
$$
 d\langle\partial^\mu Y(x),\partial^\nu Y(y)\rangle_t
 =\partial_x^\mu\partial_y^\nu k_r(x-y)\,dt.
$$
Since $k$ is smooth and supported in $B(0,1)$,
$$
 |\partial_x^\mu\partial_y^\nu k_r(x-y)|
 \lesssim r^{-|\mu|-|\nu|}
 \1_{\{|x-y|\leq r\}}.
$$
For each $j\leq\ell$, the coefficient in $dZ_t$ satisfies
$$
 \delta^\ell r^{-j}|\partial_\omega^{\ell-j}\rho_t|
 =\big(\delta/r\big)^\ell
 r^{\ell-j}|\partial_\omega^{\ell-j}\rho_t|
 \lesssim W_r.
$$
Using $0\leq c_t\leq1$ and the boundedness of $A$, we obtain
$$
 d\langle Z\rangle_t
 \lesssim\int_{\operatorname{supp}c_t}W_r(x)
       \int_{B_S(x,r)}W_r(y)\,d\sigma(y)\,d\sigma(x)\,dt
 \lesssim L^2r^bu(r)^b\,dt
$$
on $\mathcal G_n$.
Indeed, the inner integral is at most $Lr^b$, and the integral over the
support of $c_t$ is at most a constant times $Lu(r)^b$.
Both estimates follow from the good event because $r\leq u(r)$.
Consequently, uniformly in $\xi\in A_n^\R$, on $\mathcal{G}_n$,
$$
 \langle Z\rangle_{t_1}
 \lesssim L^2\int_\delta^{\sqrt h}
 r^b\min\{1,h/r\}^b\frac{dr}{r}
 \lesssim L^2h^b\big(1+|\log h|\big)
 \lesssim nL^2h^b.
$$
Thus $\mathcal G_n$ is contained in the terminal event
$\{\langle Z\rangle_{t_1}\leq CnL^2h^b\}$.
\cref{lem:martingale-inequalities}\textup{(iii)} applies to this
event.
Combining its moment bound with the estimates for $F_{t_0}$ and
$\int B_t\,dt$ gives
$$
 \E\big[\1_{\mathcal G_n}|I_{\ell,\delta}(\xi)|^{2k}\big]
 \lesssim_k n^kL^{2k}h^{bk}+L^{2k}(Rh)^{-2kK}.
$$
The construction is jointly measurable in the random outcome and $\xi$. Tonelli's theorem and
$\int_{A_n^\R}|\xi|^{ka-d}\,d\xi\asymp R^{ka}$ now give
$$
 \E\big[\1_{\mathcal G_n}
 \|I_{\ell,\delta_n}\|_{B_{k,a,n}^\R}^{2k}\big]
 \lesssim_k n^k2^{-nk(b(1-\vartheta)-a-2\varepsilon)}
 +2^{-nk(2K\vartheta-a-2\varepsilon)}.
$$
Both exponents are positive, so these expectations are summable.
\cref{lem:marked-frostman-event} shows that $\mathcal G_n$ occurs
eventually almost surely, allowing the indicators to be removed from
the tail. The finitely many initial annular norms are finite because
their regularization scales $\delta_n$ are positive. Taking the finite intersection over
$0\leq\ell\leq J-1$ completes the proof.
\end{proof}

\subsubsection{Completing the proof}
Choose $\eta>0$ satisfying both
\cref{lem:domain-fine-part} at exponent $s$ and
\cref{prop:curved-surface-estimate} at exponent $s-2$,
put $\delta_n=2^{-\lfloor(1-\eta)n\rfloor}$, and choose $J$ with
$J\eta>s/2$.
For each fixed $k$, the volume and fine terms have summable first annular
moments, hence almost surely summable $2k$-th powers. The surface
proposition gives the same $2k$-summability for the finitely many boundary
terms in \eqref{eq:domain-ibp-lower}. Therefore
$$
 \sum_n\|\widehat M_{\gamma,D}\|_{B_{k,s,n}^\R}^{2k}<\infty
 \qquad\text{almost surely}.
$$
Countable intersections over $k$ and $s$, followed by
\cref{lem:weighted-fourier-criterion} and smooth transfer, complete
the proof of \cref{thm:curved-domain}.

\subsection{Proof of \texorpdfstring{\cref{thm:curved-surface}}{Theorem~\getrefnumber{thm:curved-surface}}}

Put $p=d-1$, the dimension of $S$. By
\cref{prop:smooth-transfer}, it is enough to work with the
finite-range field and reference measure $\sigma_S$. Write
$$
 M^S_{>\delta}(dx)=\rho_{>\delta}(x)\,d\sigma(x),
 \qquad
 \widetilde M^S_{\leq\delta}=M_\gamma^S-M^S_{>\delta}.
$$
Fix $0<s<\alpha_{\gamma,p}$. Since
$\alpha_{\gamma,p}<D_{\gamma,p}$, choose $1<q\leq2$ such that
$$
 q<2p/\gamma^2,
 \qquad
 \zeta_p(q)-p>sq/2.
$$
Take $\eta>0$ small enough that
$(1-\eta)(\zeta_p(q)-p)>sq/2$ and that
\cref{prop:curved-surface-estimate} applies with $a=s$.
Put $\delta_n=2^{-\lfloor(1-\eta)n\rfloor}$.

The fine fluctuation estimate of
\cref{lem:domain-fine-part} applies with surface measure. Indeed,
only $O(\delta^{-p})$ cubes of side $\delta$ meet $S$, so the same coloring argument
gives conditional independence, and each piece has $q$-th mass moment
$O(\delta^{\zeta_p(q)})$ by \cref{prop:gmcmoments}. The Fourier norm still uses the ambient dimension $d$. Thus, for
every fixed $k\geq1$,
$$
 \E\Big[
 \|\widehat{\widetilde M^S_{\leq\delta_n}}\|_{B_{k,s,n}^\R}^{q}
 \Big]
 \lesssim_{k,q,S}
 2^{nsq/2}\delta_n^{\zeta_p(q)-p},
$$
which decays geometrically in $n$. Jensen's inequality gives summable
first moments, so the annular norms and their $2k$-th powers are almost
surely summable.
\cref{prop:curved-surface-estimate}, with $A_\omega\equiv 1$ and
$\ell=0$, gives summable $2k$-th powers of the annular norms of
$\widehat M^S_{>\delta_n}$.
\cref{lem:weighted-fourier-criterion}, followed by countable
intersections over $k$ and over $s$ increasing to $\alpha_{\gamma,p}$,
proves the lower bound.

For the upper bound, suppose that
$|\widehat M_\gamma^S(\xi)|\lesssim(1+|\xi|)^{-s/2}$ for some $s>0$.
The quadratic boundary estimate in
\cref{lem:strict-convex-geometry}(ii) gives
$
 0\leq n(x)\cdot(x-y)\lesssim|x-y|^2$ for  $x,y\in S$ sufficiently close.
Consequently, $B_S(x,r)\subset C_{Cr^2}(x)$ for all sufficiently
small $r$, uniformly in $x$.
\cref{lem:one-sided-mass} therefore yields
\begin{equation}\label{eq:surface-ball-from-decay}
 \sup_{x\in S}M_\gamma^S(B_S(x,r))\lesssim r^s.
\end{equation}
Let $A_N(x)=B_S(x,c/N)$, and
choose $c>0$ so small that
$A_N(x)\subset C_N(x)$ uniformly in $x\in S$, where
$C_N(x)$ is the cap of the domain bounded by $S$.
Write $M^S_{\leq1/N}$ for the surface chaos of the fine field, so that
$M_\gamma^S=\rho_{>1/N}M^S_{\leq1/N}$.
Fix $1<q<2p/\gamma^2$. On $A_N(x)\times A_N(x)$,
$K_{>1/N}\geq\log N-C$ for all sufficiently large $N$.
Apply Kahane comparison to regularizations, using the fine field plus an independent Gaussian
constant of variance $\log N-C$ as the comparison field. \cref{prop:gmcmoments} and Fatou's lemma then give
$$
 \E[M^S_{\leq1/N}(A_N(x))^q]
 \lesssim
 N^{-\frac{\gamma^2}{2}(q^2-q)}N^{-\zeta_p(q)}
 =N^{-pq}.
$$
Also $\E[M^S_{\leq1/N}(A_N(x))]=\sigma_S(A_N(x))\asymp N^{-p}$.
The H\"older argument in the proof of
\cref{lem:curved-upper-fine-cap} therefore gives constants
$c_0,c_1>0$, independent of $x,N$, such that
$$
 \PP\big(M^S_{\leq1/N}(A_N(x))\geq c_0N^{-p}\big)\geq c_1.
$$

Fix $a>\alpha_{\gamma,p}$ and choose $b<\sqrt{2p}$ such that
$p+\gamma^2/2-\gamma b<a$.
\cref{lem:curved-upper-boundary-maximum} gives
$\cF_{>1/N}$-measurable points $x_N\in\Gamma_N$ for which
$\inf_{A_N(x_N)}\rho_{>1/N}\geq N^{\gamma b-\gamma^2/2}$
on an event whose probability tends to one.
Conditioning on $\cF_{>1/N}$ and using independence of the fine field,
we obtain
$$
 \liminf_{N\to\infty}
 \PP\Big(\max_{x\in\Gamma_N}M_\gamma^S(A_N(x))\geq N^{-a}\Big)>0.
$$
Reverse Fatou and \eqref{eq:surface-ball-from-decay} imply
$\PP(\dim_F(M_\gamma^S)\leq a)>0$.
Deleting finitely many independent scale blocks multiplies the surface
chaos by a smooth positive ambient function. The same multiplier and
tail zero--one argument as in \cref{sec:curved-upper} therefore gives
$\dim_F(M_\gamma^S)\leq a$ almost surely.
Letting $a\to\alpha_{\gamma,p}$ along a countable sequence proves
the upper bound.

\begin{rmk}
For canonical circular chaos, the covariance differs smoothly from that
of the planar auxiliary field restricted to $S^1$. Apply Gaussian
completion on $\T$ as in \cref{prop:smooth-transfer}.
The resulting smooth positive multipliers extend to an annular
neighbourhood by keeping them constant along radial lines. The
Euclidean convolution argument in that proposition therefore preserves
Fourier dimension after embedding the circle in $\R^2$.
This proves the consequence for the circle stated after
\cref{thm:curved-surface}.
\end{rmk}

\bibliographystyle{amsplain}
\bibliography{biblio}

@book{AdlerTaylor,
  author = {Adler, Robert J. and Taylor, Jonathan E.},
  title = {Random fields and geometry},
  series = {Springer Monographs in Mathematics},
  publisher = {Springer},
  address = {New York},
  year = {2007},
  url = {https://doi.org/10.1007/978-0-387-48116-6}
}

@misc{ArguinHamdan,
  author = {Arguin, Louis-Pierre and Hamdan, Jad},
  title = {On the {Fourier} coefficients of critical {Gaussian} multiplicative chaos},
  howpublished = {arXiv preprint arXiv:2510.24424},
  year = {2025},
  url = {https://arxiv.org/abs/2510.24424}
}

@misc{AtherfoldNajnudel,
  author = {Atherfold, Christopher and Najnudel, Joseph},
  title = {The {Fourier} coefficients of the critical holomorphic multiplicative chaos},
  howpublished = {arXiv preprint arXiv:2508.13849},
  year = {2025},
  url = {https://arxiv.org/abs/2508.13849}
}

@article{BarralKupiainenNikulaSaksmanWebb,
  author = {Barral, Julien and Kupiainen, Antti and Nikula, Miika and Saksman, Eero and Webb, Christian},
  title = {Basic properties of critical lognormal multiplicative chaos},
  journal = {Ann. Probab.},
  volume = {43},
  number = {5},
  year = {2015},
  pages = {2205--2249},
  url = {https://doi.org/10.1214/14-AOP931}
}

@book{BerestyckiPowell,
  author = {Berestycki, Nathana{\"e}l and Powell, Ellen},
  title = {{Gaussian} free field and {Liouville} quantum gravity},
  series = {Cambridge Studies in Advanced Mathematics},
  volume = {220},
  publisher = {Cambridge University Press},
  address = {Cambridge},
  year = {2025},
  url = {https://doi.org/10.1017/9781009405492}
}

@article{BerestyckiWebbWong2018,
  author = {Berestycki, Nathana{\"e}l and Webb, Christian and Wong, Mo Dick},
  title = {Random {Hermitian} matrices and {Gaussian} multiplicative chaos},
  journal = {Probab. Theory Related Fields},
  volume = {172},
  number = {1--2},
  year = {2018},
  pages = {103--189},
  url = {https://doi.org/10.1007/s00440-017-0806-9}
}

@article{Bertacco2023,
  author = {Bertacco, Federico},
  title = {Multifractal analysis of {Gaussian} multiplicative chaos and applications},
  journal = {Electron. J. Probab.},
  volume = {28},
  year = {2023},
  pages = {Paper No. 2, 36},
  url = {https://doi.org/10.1214/22-EJP893}
}

@article{BGKRV,
  author = {Baverez, Guillaume and Guillarmou, Colin and Kupiainen, Antti and Rhodes, R{\'e}mi and Vargas, Vincent},
  title = {The {Virasoro} structure and the scattering matrix for {Liouville} conformal field theory},
  journal = {Probab. Math. Phys.},
  volume = {5},
  number = {2},
  year = {2024},
  pages = {269--320},
  url = {https://doi.org/10.2140/pmp.2024.5.269}
}

@misc{BonnefontRajamakiVargas,
  author = {Bonnefont, Benjamin and Rajam{\"a}ki, Hermanni and Vargas, Vincent},
  title = {{Fourier} dimension of imaginary {Gaussian} multiplicative chaos},
  howpublished = {arXiv preprint arXiv:2512.19441},
  year = {2025},
  url = {https://arxiv.org/abs/2512.19441}
}

@article{BrunaNagelWainger,
  author = {Bruna, Joaquim and Nagel, Alexander and Wainger, Stephen},
  title = {Convex hypersurfaces and {Fourier} transforms},
  journal = {Ann. of Math. (2)},
  volume = {127},
  number = {2},
  year = {1988},
  pages = {333--365},
  url = {https://doi.org/10.2307/2007057}
}

@misc{CaiChenFangGuo2026,
  author = {Cai, Yin and Chen, Bonan and Fang, Xiang and Guo, Feng},
  title = {Critical {Gaussian} multiplicative chaos on the circle is {Rajchman}},
  howpublished = {arXiv preprint arXiv:2608.28328},
  year = {2026},
  url = {https://arxiv.org/abs/2608.28328}
}

@misc{CaiFangQu2026,
  author = {Cai, Yin and Fang, Xiang and Qu, Hongdou},
  title = {Exact {Fourier} dimensions of dyadic {Mandelbrot} cascades on curves of nonvanishing curvature under minimal integrability},
  howpublished = {arXiv preprint arXiv:2606.11758},
  year = {2026},
  url = {https://arxiv.org/abs/2606.11758}
}

@article{CarnovaleFraserdeOrellana2024,
  author = {Carnovale, Marc and Fraser, Jonathan M. and de Orellana, Ana E.},
  title = {Obtaining the {Fourier} spectrum via {Fourier} coefficients},
  journal = {Proc. Amer. Math. Soc.},
  volume = {154},
  year = {2026},
  number = {5},
  pages = {2005--2017},
  url = {https://doi.org/10.1090/proc/17610}
}

@misc{ChenHanQiuWang,
  author = {Chen, Xinxin and Han, Yong and Qiu, Yanqi and Wang, Zipeng},
  title = {Harmonic analysis of {Mandelbrot} cascades---in the context of vector-valued martingales},
  howpublished = {arXiv preprint arXiv:2409.13164},
  year = {2024},
  url = {https://arxiv.org/abs/2409.13164}
}

@misc{ChenLinQiu,
  author = {Chen, Yukun and Lin, Zhaofeng and Qiu, Yanqi},
  title = {Exact values of {Fourier} dimensions of {Gaussian} multiplicative chaos on high dimensional torus},
  howpublished = {arXiv preprint arXiv:2507.23494},
  year = {2025},
  url = {https://arxiv.org/abs/2507.23494}
}

@article{ChenLiSuomala,
  author = {Chen, Changhao and Li, Bing and Suomala, Ville},
  title = {{Fourier} dimension of {Mandelbrot} multiplicative cascades},
  journal = {Comm. Math. Phys.},
  volume = {406},
  year = {2025},
  number = {8},
  pages = {Paper No.~182, 15},
  url = {https://doi.org/10.1007/s00220-025-05354-x}
}

@article{DingRoyZeitouni,
  author = {Ding, Jian and Roy, Rishideep and Zeitouni, Ofer},
  title = {Convergence of the centered maximum of log-correlated {Gaussian} fields},
  journal = {Ann. Probab.},
  volume = {45},
  number = {6A},
  year = {2017},
  pages = {3886--3928},
  url = {https://doi.org/10.1214/16-AOP1152}
}

@article{DuplantierSheffield2011,
  author = {Duplantier, Bertrand and Sheffield, Scott},
  title = {{Liouville} quantum gravity and {KPZ}},
  journal = {Invent. Math.},
  volume = {185},
  number = {2},
  year = {2011},
  pages = {333--393},
  url = {https://doi.org/10.1007/s00222-010-0308-1}
}

@book{Falconer1997,
  author = {Falconer, Kenneth},
  title = {Techniques in fractal geometry},
  publisher = {John Wiley \& Sons, Ltd.},
  address = {Chichester},
  year = {1997},
  url = {https://kennethfalconer.github.io/papers.html}
}

@book{Falconer2014,
  author = {Falconer, Kenneth},
  title = {Fractal geometry: mathematical foundations and applications},
  edition = {Third},
  publisher = {John Wiley \& Sons, Ltd.},
  address = {Chichester},
  year = {2014},
  url = {https://www.wiley.com/en-us/Fractal+Geometry:+Mathematical+Foundations+and+Applications,+3rd+Edition-p-9781118762868}
}

@article{FalconerJin2019,
  author = {Falconer, Kenneth and Jin, Xiong},
  title = {Exact dimensionality and projection properties of {Gaussian} multiplicative chaos measures},
  journal = {Trans. Amer. Math. Soc.},
  volume = {372},
  number = {4},
  year = {2019},
  pages = {2921--2957},
  url = {https://doi.org/10.1090/tran/7776}
}

@article{Fraser2024,
  author = {Fraser, Jonathan M.},
  title = {The {Fourier} spectrum and sumset type problems},
  journal = {Math. Ann.},
  volume = {390},
  number = {3},
  year = {2024},
  pages = {3891--3930},
  url = {https://doi.org/10.1007/s00208-024-02843-7}
}

@article{GarbanVargas,
  author = {Garban, Christophe and Vargas, Vincent},
  title = {Harmonic analysis of {Gaussian} multiplicative chaos on the circle},
  journal = {Probab. Theory Related Fields},
  volume = {195},
  year = {2026},
  number = {3-4},
  pages = {1275--1297},
  url = {https://doi.org/10.1007/s00440-026-01497-7}
}

@misc{GorodetskyWong,
  author = {Gorodetsky, Ofir and Wong, Mo Dick},
  title = {On the limiting distribution of sums of random multiplicative functions},
  howpublished = {arXiv preprint arXiv:2508.12956},
  year = {2025},
  url = {https://arxiv.org/abs/2508.12956}
}

@article{Harper2020,
  author = {Harper, Adam J.},
  title = {Moments of random multiplicative functions, {I}: low moments, better than squareroot cancellation, and critical multiplicative chaos},
  journal = {Forum Math. Pi},
  volume = {8},
  year = {2020},
  pages = {e1, 95},
  url = {https://doi.org/10.1017/fmp.2019.7}
}

@article{Herz1962,
  author = {Herz, Carl S.},
  title = {{Fourier} transforms related to convex sets},
  journal = {Ann. of Math. (2)},
  volume = {75},
  number = {1},
  year = {1962},
  pages = {81--92},
  url = {https://doi.org/10.2307/1970421}
}

@book{HytonenVanNeervenVeraarWeis2016,
  author = {Hyt{\"o}nen, Tuomas and van Neerven, Jan and Veraar, Mark and Weis, Lutz},
  title = {Analysis in {Banach} spaces. {Vol. I}. {Martingales} and {Littlewood--Paley} theory},
  series = {Ergebnisse der Mathematik und ihrer Grenzgebiete. 3. Folge},
  volume = {63},
  publisher = {Springer},
  address = {Cham},
  year = {2016},
  url = {https://doi.org/10.1007/978-3-319-48520-1}
}

@article{JunnilaSaksmanWebb2019,
  author = {Junnila, Janne and Saksman, Eero and Webb, Christian},
  title = {Decompositions of log-correlated fields with applications},
  journal = {Ann. Appl. Probab.},
  volume = {29},
  number = {6},
  year = {2019},
  pages = {3786--3820},
  url = {https://doi.org/10.1214/19-AAP1492}
}

@article{Kahane1985,
  author = {Kahane, Jean-Pierre},
  title = {Sur le chaos multiplicatif},
  journal = {Ann. Sci. Math. Qu{\'e}bec},
  volume = {9},
  number = {2},
  year = {1985},
  pages = {105--150},
  url = {https://www.labmath.uqam.ca/~annales/volumes/09-2/PDF/105-150.pdf}
}

@misc{LinQiuTanI,
  author = {Lin, Zhaofeng and Qiu, Yanqi and Tan, Mingjie},
  title = {Harmonic analysis of multiplicative chaos {Part I}: the proof of {Garban--Vargas} conjecture for {1D GMC}},
  howpublished = {arXiv preprint arXiv:2411.13923},
  year = {2024},
  url = {https://arxiv.org/abs/2411.13923}
}

@misc{LinQiuTanII,
  author = {Lin, Zhaofeng and Qiu, Yanqi and Tan, Mingjie},
  title = {Harmonic analysis of multiplicative chaos {Part II}: a unified approach to {Fourier} dimensions},
  howpublished = {arXiv preprint arXiv:2505.03298},
  year = {2025},
  url = {https://arxiv.org/abs/2505.03298}
}

@book{Mattila2015,
  author = {Mattila, Pertti},
  title = {Fourier analysis and {Hausdorff} dimension},
  series = {Cambridge Studies in Advanced Mathematics},
  volume = {150},
  publisher = {Cambridge University Press},
  address = {Cambridge},
  year = {2015},
  url = {https://doi.org/10.1017/CBO9781316227619}
}

@article{NajnudelPaquetteSimm,
  author = {Najnudel, Joseph and Paquette, Elliot and Simm, Nick},
  title = {Secular coefficients and the holomorphic multiplicative chaos},
  journal = {Ann. Probab.},
  volume = {51},
  number = {4},
  year = {2023},
  pages = {1193--1248},
  url = {https://doi.org/10.1214/22-AOP1616}
}

@misc{NajnudelPaquetteSimmVu,
  author = {Najnudel, Joseph and Paquette, Elliot and Simm, Nick and Vu, Truong},
  title = {The {Fourier} coefficients of the holomorphic multiplicative chaos in the limit of large frequency},
  howpublished = {arXiv preprint arXiv:2502.14863},
  year = {2025},
  url = {https://arxiv.org/abs/2502.14863}
}

@article{Polyakov1981,
  author = {Polyakov, A. M.},
  title = {Quantum geometry of bosonic strings},
  journal = {Phys. Lett. B},
  volume = {103},
  number = {3},
  year = {1981},
  pages = {207--210},
  url = {https://doi.org/10.1016/0370-2693(81)90743-7}
}

@book{RevuzYor,
  author = {Revuz, Daniel and Yor, Marc},
  title = {Continuous martingales and {Brownian} motion},
  series = {Grundlehren der Mathematischen Wissenschaften},
  volume = {293},
  edition = {Third},
  publisher = {Springer-Verlag},
  address = {Berlin},
  year = {1999},
  url = {https://doi.org/10.1007/978-3-662-06400-9}
}

@article{RhodesVargas2014,
  author = {Rhodes, R{\'e}mi and Vargas, Vincent},
  title = {Gaussian multiplicative chaos and applications: a review},
  journal = {Probab. Surv.},
  volume = {11},
  year = {2014},
  pages = {315--392},
  url = {https://doi.org/10.1214/13-PS218}
}

@article{RyouSuomala2026,
  author = {Ryou, Donggeun and Suomala, Ville},
  title = {Fourier dimension of {Mandelbrot} cascades on planar curves},
  journal = {Probab. Theory Related Fields},
  year = {2026},
  url = {https://doi.org/10.1007/s00440-026-01528-3}
}

@article{SaksmanWebb2020,
  author = {Saksman, Eero and Webb, Christian},
  title = {The {Riemann} zeta function and {Gaussian} multiplicative chaos: statistics on the critical line},
  journal = {Ann. Probab.},
  volume = {48},
  number = {6},
  year = {2020},
  pages = {2680--2754},
  url = {https://doi.org/10.1214/20-AOP1433}
}

@article{Schulz1991,
  author = {Schulz, Helmut},
  title = {Convex hypersurfaces of finite type and the asymptotics of their {Fourier} transforms},
  journal = {Indiana Univ. Math. J.},
  volume = {40},
  number = {4},
  year = {1991},
  pages = {1267--1275},
  url = {https://doi.org/10.1512/iumj.1991.40.40056}
}

@article{ShmerkinSuomala,
  author = {Shmerkin, Pablo and Suomala, Ville},
  title = {Spatially independent martingales, intersections, and applications},
  journal = {Mem. Amer. Math. Soc.},
  volume = {251},
  number = {1195},
  year = {2018},
  pages = {v+102},
  url = {https://doi.org/10.1090/memo/1195}
}

@article{SoundararajanZaman,
  author = {Soundararajan, Kannan and Zaman, Asif},
  title = {A model problem for multiplicative chaos in number theory},
  journal = {Enseign. Math.},
  volume = {68},
  number = {3--4},
  year = {2022},
  pages = {307--340},
  url = {https://doi.org/10.4171/LEM/1031}
}

@misc{Verreault2026,
  author = {Verreault, William},
  title = {Almost sure upper bound for sums of random multiplicative functions and critical chaos},
  howpublished = {arXiv preprint arXiv:2608.21354},
  year = {2026},
  url = {https://arxiv.org/abs/2608.21354}
}

\end{document}